\tikzset{
	% rootvertex/.style={fill=black, rectangle,inner sep=2.5pt},
	 rootvertex/.style={fill=teal, rectangle,inner sep=2.5pt},	 
	 whiteroot/.style={fill=white, draw,rectangle,inner sep=2pt},	 
	 invertex/.style={fill=black,circle,inner sep=1pt},
	 oldroot/.style={fill=black,rectangle,inner sep=1.4pt},	 
%	 rleaf/.style={fill=black,draw,circle,inner sep=1.5pt},
	 rleaf/.style={fill=red,draw=red,circle,inner sep=1.5pt},	 
	 wleaf/.style={fill=white,draw,circle,inner sep=1.5pt},
%	 centro/.style={fill=gray!50,draw,circle,inner sep=1.5pt},
	 centro/.style={fill=blue,draw,circle,inner sep=1.5pt},	 
%	 pat1/.style={pattern={grid},pattern color=gray!40},
	 pat1/.style={fill=purple!20},	 
%	 pat2/.style={pattern={north west lines},pattern color=gray!40},
	 pat2/.style={fill=blue!20}, 
	 pat3/.style={fill=gray!20}
	 }
\newtheorem{theorem}{Theorem}
\newtheorem{corollary}[theorem]{Corollary}
\newtheorem{lemma}[theorem]{Lemma}
\newtheorem{problem}[theorem]{Problem}
\theoremstyle{definition}
\newtheorem{definition}[theorem]{Definition}
\newcommand{\T}{\mathcal{T}}
\def\vt{t\kern-0.22em\raise.18ex\hbox{\char'47}\lower.18ex\hbox{}\kern-0.08em}
\newcommand{\old}[1]{{}}
\newcommand{\whiteleaf}[1]{\left\langle #1 \right\rangle}
\newcommand{\mast}{\operatorname{mast}}
\title{Universal rooted phylogenetic tree shapes and universal tanglegrams }
\thanks{This material is based upon work supported by the National Science Foundation under Grant Number DMS 1641020.}
\author[A. Clifton]{Ann Clifton}
\author[\'E. Czabarka]{\'Eva Czabarka}
\author[K. Liu]{Kevin Liu} 
\author[S. Loeb]{Sarah Loeb}
\author[U. Okur]{Utku Okur}
\author[L. Sz\'ekely]{L\'aszl\'o Sz\'ekely}
\author[K. Wicke]{Kristina Wicke}
\address{Ann Clifton\\ Louisiana Tech University}
\email{aclifton@latech.edu}
\address{\'Eva Czabarka\\ University of South Carolina}
\email{czabarka@math.sc.edu}
\address{Kevin Liu\\University of Washington}
\email{kliu15@uw.edu}
\address{Sarah Loeb\\ Hampden-Sydney College}
\email{sloeb@hsc.edu}
\address{Utku Okur\\ University of South Carolina}
\email{uokur@email.sc.edu}
\address{L\'aszl\'o Sz\'ekely\\ University of South Carolina}
\email{szekely@math.sc.edu}
\address{Kristina Wicke\\  New Jersey Institute of Technology}
\email{kristina.wicke@njit.edu}
\keywords{rooted trees, tanglegrams, universality}
\subjclass[2020]{05C05, 05C35, 05C60}
\begin{document} 
\begin{abstract}   
We provide an $\Omega(n\log n) $ lower bound and an $O(n^2)$ upper bound for the smallest size of   rooted binary trees (a.k.a. phylogenetic tree shapes), which are universal for rooted binary trees with $n$ leaves, i.e., contain all of them as induced binary subtrees. We explicitly compute the smallest universal trees for $n\leq 11$. 
We also provide  an $\Omega(n^2) $ lower bound and an $O(n^4)$ upper bound for the smallest  size of tanglegrams, which are universal for size $n$ tanglegrams, i.e., which contain all of them as induced subtanglegrams. Some of our results
generalize to rooted $d$-ary trees and to $d$-ary tanglegrams.
\end{abstract}

\maketitle

\section{Introduction}
A {\it rooted binary tree} with $n\geq 2$ leaves has $n$ vertices of degree one, $n-2$ vertices of degree three, and one vertex, the root, of degree two. A single vertex is considered
a rooted binary tree with $1$ leaf.  Vertices, other than the root, are not labeled, and the branches of the tree are not ordered.

These kinds of rooted binary trees are exactly the {\it rooted phylogenetic tree shapes} or {\it rooted tree topologies}, i.e., the unlabeled trees underlying the leaf-labeled phylogenetic trees; see \cite{SempleSteel}.  The number of  $n$-leaf rooted phylogenetic tree shapes is the Wedderburn-Etherington number $W_n$ given as sequence
A001190 in The On-Line Encyclopedia of Integer Sequences (OEIS) \cite{OEIS}. There is an asymptotic formula 
\begin{equation} \label{Wnasymp}
W_n\sim a \cdot \frac{b^n}{n^{3/2}},
\end{equation}
which goes back to Otter \cite{otter}; see also \cite{Landau1977}. Numerically, $b \approx 2.4832$ and $a\approx 0.3188$.  

A rooted binary tree $T_1$ is an {\it induced binary subtree} of a rooted binary tree $T_2$, if there is a subdivision $T_1'$ of $T_1$ such that  $T_1'$ is a subtree of $T_2$ and every leaf of $T_1'$ is a leaf of $T_2$. We are interested in the smallest number of leaves of a rooted binary tree, which we will denote by $u(n)$, that contains all $n$-leaf rooted binary trees as induced binary subtrees. We are also interested in characterizing the set of rooted binary trees with $u(n)$ leaves that contain all $n$-leaf rooted binary trees, for small values of $n$. 

This type of question is {\it universality} for all kinds of structures.
The universality problem for induced subgraphs was introduced by R. Rado \cite{rado}.
It has been studied and solved by M. Goldberg and E. Lifshitz for rooted trees \cite{goldberg}, by
F.R.K. Chung, R.L. Graham, and D. Coppersmith \cite{Coppersmith} for trees, and by F. R. K. Chung, R. L. Graham, and J. Shearer \cite{cater} for so-called caterpillars.
For universality of  permutations, see  the survey of M. Engen and  V. Vatter \cite{engen}. Motivated by universality of permutations, C. Defant, N. Kravitz, and A. Shah \cite{supertrees} investigated universality in plane trees, for several variants of the question. Some variants of their problems allowed subdivisions.

In case of rooted binary trees, it is easy to obtain an upper bound  $u(n)< n^{c \log n}$ from the obvious recursion
\begin{equation} \label{eq:direkt}
u(n)\leq u(n-1)+u\left(\left\lfloor \frac{n}{2}\right\rfloor\right).
\end{equation}
To justify Inequality~\eqref{eq:direkt}, consider any rooted binary tree with $n$ leaves that we want to embed. Since it is binary, it splits into two subtrees at the root, where one has at most $\lfloor \frac{n}{2}\rfloor$ leaves, and then the other has at most $n-1$ leaves.
  
As the universality functions in \cite{goldberg}, \cite{Coppersmith}, and \cite{supertrees} are superpolynomial, it would not have been surprising if $u(n)$ was superpolynomial, but in this paper we show that $u(n)=O(n^2)$.

Setting good lower bounds for universality problems is often hard. Trivially, $u(n)=\Omega(n)$, and we prove here that $u(n)=\Omega(n\log n)$.

Additionally, we made extensive calculations to compute $u(n)$ for small values of $n$. We list all $n$-universal trees for $n\leq 11$ in Section~\ref{sec:pics}.
To our surprise, the computed values of $u(n)$ coincided for a while with the terms of the sequence A173382 from \cite{OEIS}, and even when they differed, they remained close. Sequence A173382 is defined as the  partial sums of sequence A074206, which counts the ordered factorizations of the number $n$. We have no explanation for this phenomenon, which might be a pure coincidence. The table below compares the two sequences. The sequence A173382  was studied by L. Kalm\'ar \cite{kalmarmagyar}, \cite{kalmarnemet}, who showed 
that the sequence is asymptotically $c\cdot n^s$, where $s>1$ is the single real root of $\zeta(s)=2$. Numerically, $s\approx 1.72864723$.

\begin{table}[htbp]
\begin{tabular}{|c|c|c|c|c|c|c|c|c|c|c|c|c|}\hline
	& $ 1 $ & $2$ & $3$ & $4$ & $5$ & $6$ & $7$ & $8$ & $9$ & $10$ & $11$ & $12$\\\hline
A173382 & $ 1 $ & $2$ & $3$ & $5$ & $6$ & $9$ & $10$ & $14$ & $16$ & $19$ & $20$ & $28$  \\\hline
$u(n)$ & $ 1 $ & $2$ & $3$ & $5$ & $6$ & $9$ & $10$ & $14$ & $16$ & $19$ & $21$ & $\leq 28$ \\\hline
\end{tabular}
\caption{$u(n)$ versus the Kalm\'ar sequence A173382.}\label{tab:kalmar}
\end{table}

In addition to considering universality for rooted binary trees, we also consider universality for tanglegrams. 
While we give formal definitions below, a (binary) tanglegram is a graph consisting of two rooted binary trees on the same number of leaves together with a perfect matching between their leaf sets. Tanglegrams also appear in phylogenetics as a way to illustrate and compare the correspondence between two phylogenetic trees as well as to indicate co-evolution, e.g., in terms of hosts and parasites. 
In Section~\ref{sec:kovetk}, we obtain an $O(n^4)$ upper bound for the size of the smallest $n$-universal tanglegram as 
a corollary of our result for $n$-universal trees. We also set a $\Omega(n^2)$ lower bound for this quantity. 

Finally, we extend our results from binary to $d$-ary trees and tanglegrams. This is motivated by the fact that while leaf-labeled binary trees frequently are referred to as fully resolved phylogenetic trees, in practice, phylogenetic trees are not always fully resolved (e.g., due to conflict in the data), resulting in vertices with more than two children. Trees where each internal vertex has at most $d$ children are $d$-ary trees, and $d$-ary tanglegrams (see e.g., \cite{dtangle}) are formed from a pair of such trees. The notion of inducibility has been extended to $d$-ary trees
(see \cite{dary}), and consequently the universality problem arises in these objects as well.
Our upper bound arguments for binary trees easily extend to $d$-ary trees and $d$-ary tanglegrams, and we present these extensions in Section~\ref{sec:dary}.

\section{Definitions and preliminaries}

\subsection{The basics}

We start with some necessary definitions.

\begin{definition} A \emph{rooted tree} is a tree with a special vertex called the root; we will denote the root of $T$ by $r_T$. In a rooted tree $T$, vertex $x$ is an \emph{ancestor} of vertex $y$ (in other words, $y$ is a \emph{descendant} of $x$) if
$x$ lies on the unique $r_T$-$y$ path. A \emph{parent} is an ancestor that is also a neighbor, and a \emph{child} is a descendent that is also a neighbor. A \emph{leaf} is a vertex that has no children (note that if the rooted tree has only one vertex, 
it is both a root and a leaf).  If $x,y$ are vertices of a rooted tree $T$, then the last common ancestor of $x$ and $y$ is $z$ if the intersection of the $r_T$-$x$ and $_T$-$y$ paths is the $r_T$-$z$ path. Non-leaf vertices in a tree are called
\emph{internal vertices}.
\end{definition}

\begin{definition}
A rooted tree is \emph{binary} if every non-leaf vertex has exactly two children. A single vertex tree, where the 
vertex is considered both root and leaf, is also a binary tree.
\end{definition}

\begin{definition} A \emph{redleaf tree} is a tree such that one of its leaves is colored red. 
If $T$ is a redleaf tree, we denote the red leaf by $\ell_T$. For shortness, we refer to rooted binary redleaf trees
as \emph{r-$2$-r trees}.
\end{definition}

\begin{definition} Let $T$ be a tree (redleaf or not). The \emph{white leaves} of $T$ are all the leaves that are not red. $|T|$ will denote the number of vertices and
$\whiteleaf{T}$ will denote the number of white leaves of $T$.
\end{definition}

\begin{definition} Let $T$ be a rooted binary tree (possibly with a red leaf) and $v$ a vertex of $T$. The \emph{subtree rooted at $v$} is the subtree formed by $v$ and all of its descendants.
\end{definition}

Note that we may refer to the leaves of a rooted binary tree as white leaves.

In the illustrating figures, roots and leaves will be larger vertices. Leaves are white or red circles (depending on the color of the leaf), roots are green rectangles (if the root is also a leaf, its color depends on the leaf color), and internal non-root vertices are smaller or a different color.

\subsection{Tree operations and special tree families}

\begin{definition} Let $R$ and $Q$ be two rooted trees, at most one of which is a redleaf tree. Then, the tree $T=R\oplus Q$ is obtained by joining a root vertex $r_{R\oplus Q}$ to the roots of $R$ and $Q$, i.e., by introducing a new vertex $r_{R\oplus Q}$ and the edges $(r_{R\oplus Q},r_R)$ and $(r_{R\oplus Q},r_Q)$. $T$ is a redleaf tree precisely when one of $R$ and $Q$ is a redleaf tree (see {\rm Figure~\ref{fig:operations}}).
\end{definition}

Note that $\whiteleaf{R\oplus Q}=\whiteleaf{R}+\whiteleaf{Q}$.

\begin{definition} Given a rooted redleaf tree $R$ and another rooted (redleaf or not) tree $Q$, the tree $T=R[Q]$ is the rooted tree  obtained by identifying the root of $Q$ with $\ell_R$ and removing the coloring of $\ell_R$.
Here, $r_{R[Q]}=r_R$, and $R[Q]$ is a redleaf tree precisely when $Q$ is a redleaf tree (with $\ell_{R[Q]}=\ell_Q$) (see {\rm Figure~\ref{fig:operations}}).
\end{definition}

Note that $\whiteleaf{R[Q]}=\whiteleaf{R}+\whiteleaf{Q}$.

\begin{figure}[htb]
\centering
\begin{tikzpicture}[scale=.9,font=\tiny]
	        %-----------------
	       	\node at (0,4.8) {$r_R$};
	        \draw (.25,4.25)--(0.5,4);     	        
	        \draw (-.5,4)--(0,4.5)--(.25,4.25)--(0,4);			        
	        \node[wleaf]  at (-.5,4) {}; %*	        
	        \node[invertex]  at (.25,4.25) {}; %*
	        \node[wleaf]  at (0,4) {}; %*	   
	        \node[rleaf]  at (.5,4) {}; %*
	        \node[rootvertex]  at (0,4.5) {}; %*
	        \node at (.5,3.7) {$\ell_R$};
	        \node at (0,3.2) {The tree $R$};
	        %-----------------
	       	 \node at (3,5.05) {$r_Q$};
		 \draw (3.75,4)--(3.5,4.25)--(3.25,4);
		 \draw (2.75,4)--(2.5,4.25)--(2.25,4);
		 \draw (2.5,4.25)--(3,4.75)--(3.5,4.25);		        
		 \node[rootvertex]  at (3,4.75) {}; %*        	         
	        \node[invertex]  at (2.5,4.25) {}; %*	        
	        \node[invertex]  at (3.5,4.25) {}; %*
	        \node[wleaf]  at (2.25,4) {}; %*	  
		 \node[wleaf]  at (2.75,4) {}; %*	   
		 \node[wleaf]  at (3.25,4) {}; %*	   
		 \node[wleaf]  at (3.75,4) {}; %*     	  		 	   
       		        \node at (3,3.2) {The tree $Q$};
		%-------
	        \node at (7,5.8) {$r_{R\oplus Q}$}; 
	        \draw (6.25,4.25)--(6.5,4);    
	        \draw (5.5,4)--(6,4.5)--(6.25,4.25)--(6,4);	
		 \draw (8,4)--(8.25,4.25)--(8.5,4);
		 \draw (7,4)--(7.25,4.25)--(7.5,4);
		 \draw (7.25,4.25)--(7.75,4.75)--(8.25,4.25);
		   \draw (7.75,4.75)--(7,5.5)--(6,4.5);  	                 	        		   	  		 	   		        
	        \node[wleaf]  at (5.5,4) {}; %*	        
	        \node[invertex]  at (6.25,4.25) {}; %*
	        \node[wleaf]  at (6,4) {}; %*	   
	        \node[rleaf]  at (6.5,4) {}; %*
	        \node[oldroot]  at (6,4.5) {}; %*       
	        \node at (6.5,3.7) {$\ell_{R\oplus Q}$};
	        \node[oldroot]  at (7.75,4.75) {}; %*        	        
	        \node[invertex]  at (7.25,4.25) {}; %*	        
	        \node[invertex]  at (8.25,4.25) {}; %*
	        \node[wleaf]  at (7,4) {}; %*	  
		 \node[wleaf]  at (7.5,4) {}; %*	   
		 \node[wleaf]  at (8,4) {}; %*	   
		 \node[wleaf]  at (8.5,4) {}; %*
		   \node[rootvertex]  at (7,5.5) {}; %*      
       		        \node at (7,3.2) {The tree $R\oplus Q$};
	        \node at (5.7,4.6) {$r_{R}$}; 
	        \node at (8.1,5.1) {$r_{Q}$}; 
		%------------------		        
		 \node at (10.5,5.6) {$r_{R[Q]}=r_R$};	
	        \draw (10.75,5)--(11,4.75);     	        
	        \draw (10,4.75)--(10.5,5.25)--(10.75,5)--(10.5,4.75);
		 \draw (11.75,4)--(11.5,4.25)--(11.25,4);
		 \draw (10.75,4)--(10.5,4.25)--(10.25,4);
		 \draw (10.5,4.25)--(11,4.75)--(11.5,4.25);		         
	        \node[wleaf]  at (10,4.75) {}; %*	        
	        \node[invertex]  at (10.75,5) {}; %*
	        \node[wleaf]  at (10.5,4.75) {}; %*	   
	        \node[oldroot]  at (11,4.75) {}; %*
	        \node[rootvertex]  at (10.5,5.25) {}; %*%%%%      	        
	        \node[invertex]  at (10.5,4.25) {}; %*	        
	        \node[invertex]  at (11.5,4.25) {}; %*
	        \node[wleaf]  at (10.25,4) {}; %*	  
		 \node[wleaf]  at (10.75,4) {}; %*	   
		 \node[wleaf]  at (11.25,4) {}; %*	   
		 \node[wleaf]  at (11.75,4) {}; %*
%		   \node[fill=black, rectangle,inner sep=2pt]  at (11,5) {}; %*      
		   \node at (10.5,3.2) {The tree $R[Q]$};  	  		 	   
	        \node at (11.3,4.8) {$r_{Q}$}; 
\end{tikzpicture}
\caption{$R\oplus Q$ and $R[Q]$.}\label{fig:operations}
\end{figure}
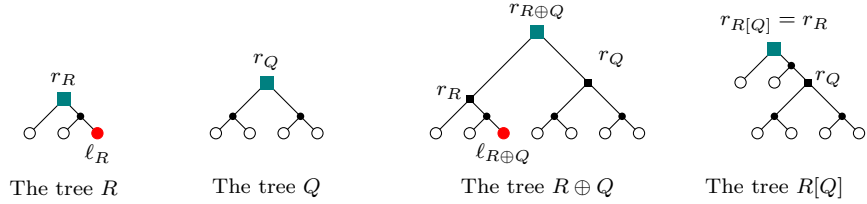

We define here some tree families, which will be needed  later. 
\begin{definition} The $n$-leaf \emph{rooted caterpillar} $C_n$ is defined recursively as follows: $C_1$ is the unique $1$-leaf rooted binary tree, and for $n\ge 2$, we have $C_n=C_1\oplus C_{n-1}$ (see \rm{Figure~\ref{fig:cater}}).
\end{definition}

\begin{figure}[htb]
\centering
\begin{tikzpicture}[scale=.9,font=\tiny]
	        \node[whiteroot]  at (0,0) {}; %*
	        \node at (0,-.5) {$C_1$};
	        \end{tikzpicture}
\quad
\begin{tikzpicture}[scale=.9,font=\tiny]
	        \draw (-.25,-.25)--(0,0)--(0.25,-.25);
	        \node[wleaf]  at (-.25,-.25) {}; %*	        
	        \node[wleaf]  at (.25,-.25) {}; %*
	        \node[rootvertex]  at (0,0) {}; %*
	        \node at (0,-.75) {$C_2$};
	        \end{tikzpicture}
\quad
\begin{tikzpicture}[scale=.9,font=\tiny]
	        \draw (0,-.5)--(.25,-.25)--(.5,-.5);
	        \draw (-.25,-.25)--(0,0)--(0.25,-.25);
	        \node[wleaf]  at (.5,-.5) {}; %*	        
	        \node[wleaf]  at (0,-.5) {}; %*
	        \node[invertex]  at (-.25,-.25) {}; %*	        
	        \node[invertex]  at (.25,-.25) {}; %*
	        \node[rootvertex]  at (0,0) {}; %*
	        \node at (0.125,-1) {$C_3$};
	        \end{tikzpicture}
\quad
\begin{tikzpicture}[scale=.9,font=\tiny]
	        \draw (.25,-.75)--(.5,-.5)--(.75,-.75);
	        \draw (0,-.5)--(.25,-.25)--(.5,-.5);
	        \draw (-.25,-.25)--(0,0)--(0.25,-.25);
	        \node[wleaf]  at (.25,-.75) {}; %*	        
	        \node[wleaf]  at (.75,-.75) {}; %*
	        \node[invertex]  at (.5,-.5) {}; %*	        
	        \node[wleaf]  at (0,-.5) {}; %*
	        \node[wleaf]  at (-.25,-.25) {}; %*	        
	        \node[invertex]  at (.25,-.25) {}; %*
	        \node[rootvertex]  at (0,0) {}; %*
	        \node at (0.25,-1.25) {$C_4$};
	        \end{tikzpicture}
\quad
\begin{tikzpicture}[scale=.9,font=\tiny]
	        \draw (1,-1)--(.75,-.75)--(.5,-1);
	        \draw (.25,-.75)--(.5,-.5)--(.75,-.75);
	        \draw (0,-.5)--(.25,-.25)--(.5,-.5);
	        \draw (-.25,-.25)--(0,0)--(0.25,-.25);
	        \node[wleaf]  at (1,-1) {}; %*	        
	        \node[wleaf]  at (.5,-1) {}; %*
	        \node[wleaf]  at (.25,-.75) {}; %*	        
	        \node[invertex]  at (.75,-.75) {}; %*
	        \node[invertex]  at (.5,-.5) {}; %*	        
	        \node[wleaf]  at (0,-.5) {}; %*
	        \node[wleaf]  at (-.25,-.25) {}; %*	        
	        \node[invertex]  at (.25,-.25) {}; %*
	        \node[rootvertex]  at (0,0) {}; %*
	        \node at (0.45,-1.5) {$C_5$};
	        \end{tikzpicture}
\caption{The rooted caterpillars $C_n$ for $n\le 5$.}\label{fig:cater}
\end{figure}
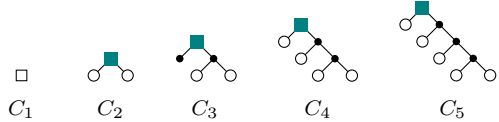

%For the sake of completeness, we also define the following. 
\begin{definition}
The \emph{height} of a  rooted binary tree is the length of a longest path from the root to a leaf.
\end{definition}
For example, the height of $C_n$ is $n-1$.

\begin{definition}
Let $h\in\mathbb{N}$. The \emph{complete binary tree $B_h$ of height $h$} is defined recursively as follows: $B_0$ is the unique 1-leaf rooted binary tree, and for $h\ge 1$, we have $B_h=B_{h-1}\oplus B_{h-1}$. Clearly, $\whiteleaf{B_h}=2^h$ (see \rm{Figure~\ref{fig:complete}}).
\end{definition}

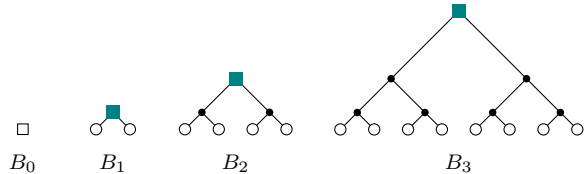
\begin{figure}[htb]
\centering
\begin{tikzpicture}[scale=.9,font=\tiny]
	        \node[whiteroot]  at (0,0) {}; %*
	        \node at (0,-.5) {$B_0$};
	        \end{tikzpicture}
\quad
\begin{tikzpicture}[scale=.9,font=\tiny]
	        \draw (-.25,-.25)--(0,0)--(0.25,-.25);
	        \node[wleaf]  at (-.25,-.25) {}; %*	        
	        \node[wleaf]  at (.25,-.25) {}; %*
	        \node[rootvertex]  at (0,0) {}; %*
	        \node at (0,-.75) {$B_1$};
	        \end{tikzpicture}
\quad
\begin{tikzpicture}[scale=.9,font=\tiny]
	        \draw (0,-.25)--(.25,0)--(.5,-.25);
	        \draw (1,-.25)--(1.25,0)--(1.5,-.25);	        
	        \draw (.25,0)--(.75,.5)--(1.25,0);
	        \node[wleaf]  at (.5,-.25) {}; %*	        
	        \node[wleaf]  at (0,-.25) {}; %*
	        \node[wleaf]  at (1,-.25) {}; %*	        
	        \node[wleaf]  at (1.5,-.25) {}; %*        
	        \node[invertex]  at (1.25,0) {}; %*
	        \node[invertex]  at (.25,0) {}; %*	        
	        \node[rootvertex]  at (.75,.5) {}; %*
	        \node at (0.75,-.75) {$B_2$};
	        \end{tikzpicture}
\quad
\begin{tikzpicture}[scale=.9,font=\tiny]
	        \draw (0,-1)--(.25,-.75)--(.5,-1);
	        \draw (1,-1)--(1.25,-.75)--(1.5,-1);	        
	        \draw (.25,-.75)--(.75,-.25)--(1.25,-.75);
	        \draw (2,-1)--(2.25,-.75)--(2.5,-1);
	        \draw (3,-1)--(3.25,-.75)--(3.5,-1);	        
	        \draw (2.25,-.75)--(2.75,-.25)--(3.25,-.75);	        
	        \draw (2.75,-.25)--(1.75,.75)--(.75,-.25);	        	        	        
	        \node[wleaf]  at (.5,-1) {}; %*	        
	        \node[wleaf]  at (0,-1) {}; %*
	        \node[wleaf]  at (1,-1) {}; %*	        
	        \node[wleaf]  at (1.5,-1) {}; %*        
	        \node[invertex]  at (1.25,-.75) {}; %*
	        \node[invertex]  at (.25,-.75) {}; %*
	        \node[invertex]  at (.75,-.25) {}; %*	        	        
	        \node[wleaf]  at (2.5,-1) {}; %*	        
	        \node[wleaf]  at (2,-1) {}; %*
	        \node[wleaf]  at (3,-1) {}; %*	        
	        \node[wleaf]  at (3.5,-1) {}; %*        
	        \node[invertex]  at (3.25,-.75) {}; %*
	        \node[invertex]  at (2.25,-.75) {}; %*
	        \node[invertex]  at (2.75,-.25) {}; %*	        	        
	        \node[rootvertex]  at (1.75,.75) {}; %*
	        \node at (1.75,-1.5) {$B_3$};
	        \end{tikzpicture}
\caption{The rooted complete binary trees $B_h$ for $h\le 3$.}\label{fig:complete}
\end{figure}

\begin{definition} For $h,\ell\in\mathbb{N}$, where $\ell\ge 2$, we define the \emph{$(h,\ell)$-jellyfish $J_{h,\ell}$} as follows: Take a rooted complete binary tree of height $h$ and identify each of its leaves with the root of a caterpillar $C_{\ell}$.
\end{definition}

Clearly $\whiteleaf{J_{h,\ell}}=2^h\ell$, the height of $J_{h,\ell}$ is $h+\ell-1$,
and $J_{h,2}=B_{h+1}$.

\subsection{Inducibility and universality}
\begin{definition} Let $T$ be a rooted binary tree (redleaf or not) and $S$ be a subset of its leaves. The \emph{binary subtree induced by $S$} is the rooted binary tree $T^{\star}$ obtained as follows:
Let $F$ be the smallest subtree  of $T$ that contains $S$, and let $r_{T^{\star}}$ be the vertex of $F$ closest to $r_T$. $F$ is a subdivision of a unique rooted binary tree, which is by definition  $T^{\star}$. In other words, we obtain $T^{\star}$ by suppressing the non-root  vertices  of degree 2 in $F$. The leaves maintain their original coloration, i.e., $T^{\star}$ is a redleaf tree precisely when $T$ is a redleaf tree and $S$ contains $\ell_T$. The tree $T^{\prime}$ is an \emph{induced binary subtree} of $T$ if it is induced by some subset $S^{\prime}$ of the leaves of $T$ (see \rm{Figure~\ref{fig:induce}}).
\end{definition}

\begin{figure}[htbp]
\centering
\begin{tikzpicture}[scale=1,font=\tiny]
	\draw (0,0)--(2.25,2.25)--(4.5,0);
	\draw (1.5,0)--(.75,.75);
	\draw (2,0)--(3.25,1.25);
	\draw (3,0)--(3.75,0.75);
	\draw (.5,0)--(0.25,0.25);
	\draw (1,0)--(1.25,0.25);
	\draw (2.5,0)--(2.25,0.25);
	\draw (3.5,0)--(3.25,0.25);
	\draw (4,0)--(4.25,0.25);
	\draw (2.25,2.25)--(2.5,2.5)--(5,0);
        \node[wleaf]  at (0,0) {};
        \node[wleaf]  at (1,0) {};
        \node[wleaf]  at (1.5,0) {};
        \node[wleaf]  at (2,0) {};
        \node[wleaf]  at (2.5,0) {};
        \node[wleaf]  at (3,0) {};
        \node[wleaf]  at (3.5,0) {};
        \node[wleaf]  at (4,0) {};
        \node[wleaf]  at (4.5,0) {};
        \node[wleaf]  at (5,0) {};
        \node[rootvertex]  at (2.5,2.5) {};
        \node[invertex]  at (2.25,2.25) {};        
        \node[invertex]  at (0.75,0.75) {};
        \node[invertex]  at (3.25,1.25) {};
        \node[invertex]  at (3.75,0.75) {};
        \node[invertex]  at (0.25,0.25) {};
        \node[invertex]  at (1.25,0.25) {};
        \node[invertex]  at (2.25,0.25) {};
        \node[invertex]  at (3.25,0.25) {};
        \node[invertex]  at (4.25,0.25) {};

	\node at (.5,-0.25) {$x_1$};
	\node at (1.5,-0.25) {$x_2$};
	\node at (2,-0.25) {$x_3$};
	\node at (4,-0.25) {$x_4$};
        \node[rleaf]  at (.5,0) {};
	
\end{tikzpicture}
\quad\quad\quad
	%\node at (6.75,-1.5) {The original tree $T$ and four of its leaves $x_1,x_2,x_3,x_4$ selected.};
\begin{tikzpicture}[scale=1,font=\tiny]
	\draw (16,0)--(16.75,0.75)--(17.5,0);
	\draw (16.25,0.25)--(16.5,0);
	\draw (17.25,0.25)--(17,0);
        \node[wleaf]  at (16.5,0) {};
        \node[wleaf]  at (17,0) {};
        \node[wleaf]  at (17.5,0) {};
        \node[invertex]  at (16.25,0.25) {};
        \node[rootvertex]  at (16.75,0.75) {};
        \node[invertex]  at (17.25,0.25) {};
	\node at (16,-0.25) {$x_1$};
	\node at (16.5,-0.25) {$x_2$};
	\node at (17,-0.25) {$x_3$};
	\node at (17.5,-0.25) {$x_4$};
        \node[rleaf]  at (16,0) {};
%	\node at (18.5,-1.5) {The tree induced by the selected leaves.};
\end{tikzpicture}
\caption{A rooted binary redleaf tree with $4$ selected leaves marked $x_1,x_2,x_3,x_4$ and the tree induced by the selected leaves.}\label{fig:induce}
\end{figure}
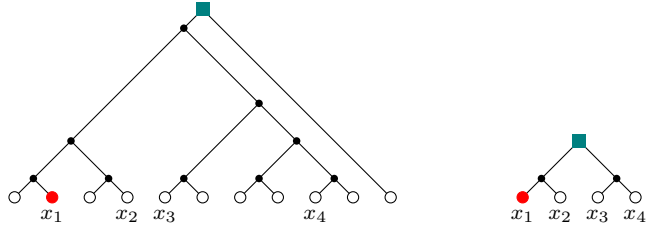

\begin{definition} A rooted binary tree is \emph{$n$-universal} if it contains all $n$-leaf rooted binary trees as induced binary subtrees. $\mathcal{U}(n)$ denotes the class of $n$-universal trees and $u(n)=\min\{\whiteleaf{T}:T\in\mathcal{U}(n)\}$.
An r-$2$-r tree is \emph{$n$-universal} if it contains all r-$2$-r trees $T$ with $n$ white leaves as induced binary subtrees. $\mathcal{U}_1(n)$ denotes the class of $n$-universal r-$2$-r trees and $u_1(n)=\min\{\whiteleaf{T}:T\in\mathcal{U}_1(n)\}$.
\end{definition}

\subsection{Maximum agreement subtree}

\begin{definition} Let $T_1,T_2$ be rooted binary trees. The rooted binary tree $R$ is an \emph{agreement subtree of $T_1$ and $T_2$}, if
it is an induced binary subtree of both $T_1$ and $T_2$.  $R$ is a \emph{maximum agreement subtree of $T_1$ and $T_2$}, if it is an agreement subtree of them and
$\whiteleaf{R}$ is maximal amongst all agreement subtrees. We will denote by $\mast(T_1,T_2)$  the number of leaves in a maximum agreement subtree.
\end{definition}
Note that the concept of maximum agreement subtrees has been much investigated in the labelled phylogenetic tree context, e.g.,~\cite{bordewich,markin,masttree}. In the following, we show that the maximum agreement subtree of two jellyfishes is a jellyfish with the maximum possible height of the complete binary tree subgraph and the entire jellyfish, which will be useful in some lower bound arguments.

\begin{lemma}\label{lm:jelly} Assume that $h_1\le h_2$ and  $\ell_1,\ell_2\ge 2$, and set $m=\min(h_1+\ell_1-1,h_2+\ell_2-1)$.
Then $\mast (J_{h_1,\ell_1},J_{h_2,\ell_2})=\whiteleaf{J_{h_1,m-h_1+1}}=2^{h_1}(m+1-h_1)$.
\end{lemma}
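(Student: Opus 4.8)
The plan is to prove the two inequalities separately, relying throughout on the elementary fact that passing to an induced binary subtree never increases the height (suppressing degree-$2$ vertices only shortens root-to-leaf paths). Since a maximum agreement subtree $R$ embeds into both $J_{h_1,\ell_1}$ and $J_{h_2,\ell_2}$, whose heights are $h_1+\ell_1-1$ and $h_2+\ell_2-1$, we immediately obtain $\mathrm{height}(R)\le m$. This single observation will drive the upper bound.

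For the lower bound I would exhibit $J_{h_1,m-h_1+1}$ as an agreement subtree, i.e.\ embed it into each jellyfish. The auxiliary fact I would isolate first is that a caterpillar $C_j$ is an induced binary subtree of a rooted binary tree $T$ precisely when $j\le \mathrm{height}(T)+1$: given a longest root-to-leaf path, select one pendant leaf hanging off each internal vertex of the path together with the bottom leaf. To embed $J_{h_1,m-h_1+1}$ into $J_{h_1,\ell_1}$, keep the complete binary tree $B_{h_1}$ and shorten each caterpillar from $C_{\ell_1}$ to $C_{m-h_1+1}$, which is legal because $m-h_1+1\le \ell_1$ (equivalently $m\le h_1+\ell_1-1$). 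To embed it into $J_{h_2,\ell_2}$, use $h_1\le h_2$: realize $B_{h_1}$ as the top $h_1$ levels of $B_{h_2}$, so that below each of its $2^{h_1}$ leaves sits a copy of the subjellyfish $J_{h_2-h_1,\ell_2}$, of height $(h_2-h_1)+\ell_2-1$; into each such copy embed $C_{m-h_1+1}$, which is possible since $m-h_1+1\le (h_2-h_1)+\ell_2-1+1$, i.e.\ $m\le h_2+\ell_2-1$. As the $2^{h_1}$ subtrees are vertex-disjoint these embeddings combine, giving $\mast\ge 2^{h_1}(m-h_1+1)$.

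For the upper bound, let $N(T,k)$ denote the maximum number of leaves of an induced binary subtree of $T$ of height at most $k$. Because $R$ is an induced subtree of $J_{h_1,\ell_1}$ of height $\le m$, it suffices to show $N(J_{h_1,\ell_1},m)=2^{h_1}(m-h_1+1)$. I would set up the recursion obtained by deciding whether the chosen leaves lie on one side of the root or on both: for $T=A\oplus B$,
\[
N(A\oplus B,k)=\max\bigl\{N(A,k),\,N(B,k),\,N(A,k-1)+N(B,k-1)\bigr\}.
\]
Using $J_{h,\ell}=J_{h-1,\ell}\oplus J_{h-1,\ell}$ and the base case $N(C_\ell,k)=N(J_{0,\ell},k)=\min(\ell,k+1)$, this becomes $N(J_{h,\ell},k)=\max\{N(J_{h-1,\ell},k),\,2N(J_{h-1,\ell},k-1)\}$. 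An induction on $h$ then yields $N(J_{h,\ell},k)=2^{h}\min(k-h+1,\ell)$ for all $k\ge h$: the inductive step reduces to checking $\min(k-h+2,\ell)\le 2\min(k-h+1,\ell)$, which holds whenever $\min(k-h+1,\ell)\ge 1$, so the second term of the maximum dominates. Specializing to $h=h_1$, $\ell=\ell_1$, $k=m$, and noting $h_1\le m$ and $m-h_1+1\le\ell_1$, gives $N(J_{h_1,\ell_1},m)=2^{h_1}(m-h_1+1)$, completing the upper bound.

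The main obstacle is the upper bound, and specifically verifying that the recursion closes up to the clean formula $2^{h}\min(k-h+1,\ell)$. The key technical point is that at every node encountered in the recursion the relevant height budget $k$ stays at least the current complete-binary-tree height $h$ (guaranteed by $m\ge h_1$, which follows from $\ell_1,\ell_2\ge 2$), so the balanced ``use both sides'' branch always wins and no degenerate saturation of the caterpillars occurs. The two bounds then match, proving $\mast(J_{h_1,\ell_1},J_{h_2,\ell_2})=2^{h_1}(m-h_1+1)=\whiteleaf{J_{h_1,m-h_1+1}}$.
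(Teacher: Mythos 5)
Your proof is correct, but your upper bound takes a genuinely different route from the paper's. Both arguments share the same skeleton for the easier parts: the height of any common induced binary subtree is at most $m$, and $J_{h_1,m-h_1+1}$ embeds into both jellyfish (the paper dismisses this embedding as trivial; you spell out both sides, including the useful observation that below depth $h_1$ of $J_{h_2,\ell_2}$ one finds $2^{h_1}$ disjoint copies of $J_{h_2-h_1,\ell_2}$ of height $h_2-h_1+\ell_2-1$). For the upper bound, the paper argues structurally: given any common induced binary subtree $T$, it cuts $T$ at depth $h_1$ and observes that each subtree of $T$ hanging from a non-leaf vertex at depth $h_1$ embeds into a caterpillar of $J_{h_1,\ell_1}$ and has height at most $m-h_1$, hence is some $C_{k_i}$ with $2\le k_i\le m-h_1+1$; therefore $T$ is itself a subtree of $J_{h_1,m-h_1+1}$ and $\whiteleaf{T}\le\whiteleaf{J_{h_1,m-h_1+1}}$. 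You instead introduce the height-constrained quantity $N(T,k)$, prove the recursion $N(A\oplus B,k)=\max\{N(A,k),N(B,k),N(A,k-1)+N(B,k-1)\}$, and solve it exactly for jellyfish, obtaining $N(J_{h,\ell},k)=2^{h}\min(k-h+1,\ell)$ for $k\ge h$. Both routes are valid. The paper's argument yields a stronger structural conclusion --- every common induced subtree is literally a subtree of $J_{h_1,m-h_1+1}$, so the maximum agreement subtree of two jellyfish is itself a jellyfish, which is how the result is advertised in the surrounding text --- while your dynamic-programming computation yields more quantitative information (the extremal leaf count for every height budget $k$, not just $k=m$) and replaces the analysis of how an arbitrary common subtree sits inside $J_{h_1,\ell_1}$ by a mechanical induction whose only delicate point, that the balanced branch of the maximum always dominates, you correctly reduce to $\min(k-h+1,\ell)\ge 1$, guaranteed by $k\ge h$ and $\ell\ge 2$.
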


\begin{proof}
Any common induced binary subtree of $J_{h_1,\ell_1}$ and $J_{h_2,\ell_2}$
has height at most $m$. Trivially, $J_{h_1,m-h_1+1}$ is a common induced binary subtree, so 
$\mast (J_{h_1,\ell_1},J_{h_2,\ell_2})\ge \whiteleaf{J_{h_1,m-h_1+1}}$.

Now consider a common induced binary subtree $T$ of $J_{h_1,\ell_1}$ and $J_{h_2,\ell_2}$. Set $T_0$ to be the subtree (in the usual sense) of $T$
containing the  vertices of $T$ at distance at most $h_1$ from $r_T$, and let $v_1,\ldots,v_s$ ($s\le 2^{h_1}$)
be the set of leaves of $T_0$ at distance $h_1$ from $r_T$
that are not leaves of $T$.
Then there are rooted binary trees $T_1,\ldots,T_s$ such that $T$ is obtained from $T_0$ by identifying each $v_i$ with the root of $T_i$. 
Moreover, as $T$ has height at most $m$ and $T$ is an induced binary subtree
of $J_{h_1,\ell_1}$, for each $1\le i\le s$ there is a $k_i:2\le k_i\le m-h_1+1$ such that $T_i=C_{k_i}$. 
But this means that $T$ is a subtree
of $J_{h_1,m-h_1+1}$. Therefore, $\whiteleaf{T}\le \whiteleaf{J_{h_1,m-h_1+1}}$, and consequently, 
$\mast (J_{h_1,\ell_1},J_{h_2,\ell_2})\le \whiteleaf{J_{h_1,m-h_1+1}}$.
\end{proof}

\subsection{Leaf centroid vertex}
We will need the following slightly modified version of the centroid.

\begin{definition} Let $T$ be any tree and $W$ be a class of vertices (e.g., leaf, internal, etc.).
For any vertex $v$ of $T$, 
$$\tau_W(v)=\max\{|W\cap (S\cup\{v\})|: S\text{ is a component of } T-v\}.$$
The vertex $w$ is a \emph{$W$-centroid vertex}, if $\tau_W(w)$ is minimal over all vertices of $T$.
The \emph{$W$-centroid} is the set of $W$-centroid vertices.
\end{definition}

We will use in particular white leaf centroid vertices, so we choose $W$ be the class of white leaves (see Figure~\ref{fig:centroid}).

\begin{figure}
\begin{tikzpicture}[scale=.7,font=\tiny]
  \node[wleaf] (l0) at (-.5,3) {};
  \node[wleaf] (l7) at (1.5,3) {};
	 \node[wleaf] (l1) at (0,3) {};
	 \node[wleaf] (l2) at (0.5,3) {};
	 \node[wleaf] (l3) at (1,3) {};
	 \node[wleaf] (l4) at (0,0) {};
	 \node[wleaf] (l5) at (0.5,0) {};
	 \node[wleaf] (l6) at (1,0) {};  
	 \node[invertex] (c1)at (0.5,0.5) {};
	 \node[invertex] (c2) at (1,.625) {};
	 \node[invertex] (c3) at (1.5,.75) {};
         \node at (1.2,1.5) {$x$};
	 \node[invertex] (c4) at (1,.875) {};
	 \node[invertex] (c5)at (0.5,1) {};
	 \node[invertex] (c6) at (0,1.125) {};
	 \node[invertex] (c7) at (-0.5,1.25) {};
	 \node[invertex] (c8) at (0,1.375) {};
	 \node[invertex] (c9)at (0.5,1.5) {};
	 \node[invertex] (c10) at (1,1.625) {};
	 \node[invertex] (c11) at (1.5,1.75) {};
	 \node[invertex] (c12) at (1,1.875) {};
	 \node[invertex] (c13)at (0.5,2) {};
	 \node[invertex] (c14) at (0,2.125) {};
	 \node[invertex] (c15) at (-0.5,2.25) {};
	 \node[invertex] (c16) at (0,2.375) {};
	 
	 \node[centro] (c17) at (0.5,2.5) {};	
	 \draw (l5)--(c1)--(c2);
  \draw (c2)--(c3)--(c4)--(c5)--(c6);
  \draw[densely dotted, thick] (c6)--(c7)--(c8)--(c9)--(c10)--(c11)--(c12)--(c13)--(c14);
  \draw (c14)--(c15)--(c16)--(c17);
  \draw (c17)--(l1);
  \draw (l0)--(c17)--(l7);
	 \draw (l4)--(c1)--(l6);
	 \draw (l2)--(c17)--(l3);	  
\end{tikzpicture}
\quad\quad\quad
\begin{tikzpicture}[scale=.7,font=\tiny]
	 \node[wleaf] (l1) at (0,3) {};
	 \node[wleaf] (l2) at (0.5,3) {};
	 \node[wleaf] (l3) at (1,3) {};
	 \node[wleaf] (l4) at (0,0) {};
	 \node[wleaf] (l5) at (0.5,0) {};
	 \node[wleaf] (l6) at (1,0) {};  
	 \node[centro] (d1)at (0.5,0.5) {};
	 \node[centro] (d2) at (0.5,1) {};
	 \node[centro] (d3) at (0.5,1.5) {};
  \node at (.2,1.5) {$x$};
	 \node[centro] (d4) at (0.5,2) {};
	 \node[centro] (d5) at (0.5,2.5) {};	
	 \draw (l5)--(d1);
	 \draw (l4)--(d1)--(l6);
	 \draw (l2)--(d5)--(l3);	 
	 \draw[densely dotted,thick] (d1)--(d2)--(d3)--(d4)--(d5);
	 \draw (d5)--(l1);	  
\end{tikzpicture}
\quad\quad\quad
\begin{tikzpicture}[scale=.7,font=\tiny]
	 \node[rleaf] (l1) at (0,3) {};
	 \node[wleaf] (l2) at (0.5,3) {};
	 \node[wleaf] (l3) at (1,3) {};
	 \node[wleaf] (l4) at (0,0) {};
	 \node[wleaf] (l5) at (0.5,0) {};
	 \node[wleaf] (l6) at (1,0) {};  
	 \node[centro] (c1)at (0.5,0.5) {};
	 \node[invertex] (c2) at (0.5,1) {};
	 \node[invertex] (c3) at (0.5,1.5) {};
	 \node[invertex] (c4) at (0.5,2) {};
	 \node[invertex] (c5) at (0.5,2.5) {};	 
	 \draw (l5)--(c1)--(c2)--(c3)--(c4)--(c5)--(l1);
	 \draw (l4)--(c1)--(l6);
	 \draw (l2)--(c5)--(l3);	 
\end{tikzpicture}
\quad\quad\quad
\begin{tikzpicture}[scale=.7,font=\tiny]
	 \node[wleaf] (l1) at (0,0) {};
	 \node[wleaf] (l2) at (0.5,0) {};
	 \node[wleaf] (l3) at (1,0) {};
	 \node[wleaf] (l4) at (1.5,0) {};
	 \node[wleaf] (l5) at (1.25,1.5) {};
	 \node[wleaf] (l6) at (1.75,1.5) {};
	 \node[wleaf] (l7) at (1.75,2.5) {};
	 \node[rootvertex] (root) at (1.5,3) {};
	 \node[invertex] (i1) at (0.25,0.5) {};
	 \node[invertex] (i2) at (1.25,0.5) {};
	 \node[centro] (i3) at (0.75,1.5) {};
  \node at (.45,1.5) {$x$};  
	 \node[invertex] (i4) at (1.5,2) {};
	 \node[invertex] (i5) at (1.25,2.5) {};	 
	\draw (l1)--(i1)--(l2);
	\draw (l3)--(i2)--(l4);
	\draw (i1)--(i3)--(i2);
	\draw (l6)--(i4)--(l5);	 
	\draw[densely dotted,thick] (i3)--(i5);
    \draw (i5)--(i4);
	\draw (l7)--(root)--(i5);
\end{tikzpicture}
\quad\quad\quad
\begin{tikzpicture}[scale=.7,font=\tiny]
	 \node[wleaf] (l1) at (0,0) {};
	 \node[wleaf] (l2) at (0.5,0) {};
	 \node[rleaf] (l3) at (1,0) {};
	 \node[wleaf] (l4) at (1.5,0) {};
	 \node[wleaf] (l5) at (1.25,1.5) {};
	 \node[wleaf] (l6) at (1.75,1.5) {};
	 \node[wleaf] (l7) at (1.75,2.5) {};
	 \node[rootvertex] (root) at (1.5,3) {};
	 \node[invertex] (i1) at (0.25,0.5) {};
	 \node[invertex] (i2) at (1.25,0.5) {};
	 \node[centro] (i3) at (0.75,1.5) {};
	 \node[invertex] (i4) at (1.5,2) {};
	 \node[centro] (i5) at (1.25,2.5) {};
	 
	\draw (l1)--(i1)--(l2);
	\draw (l3)--(i2)--(l4);
	\draw (i1)--(i3)--(i2);
	\draw (l6)--(i4)--(l5);	 
	\draw (i3)--(i5)--(i4);
	\draw (l7)--(root)--(i5);

\end{tikzpicture}
\caption{White leaf centroids in different (unrooted, unrooted, unrooted redleaf, rooted binary and r-$2$-r) trees. 
White leaf centroid vertices are colored blue. For the white leaf trees, the (usual) centroid vertices are marked with an $x$, and the $1/3$-$2/3$ separator edges (in terms of the number of vertices) are drawn as dotted lines.}
\label{fig:centroid}
\end{figure}
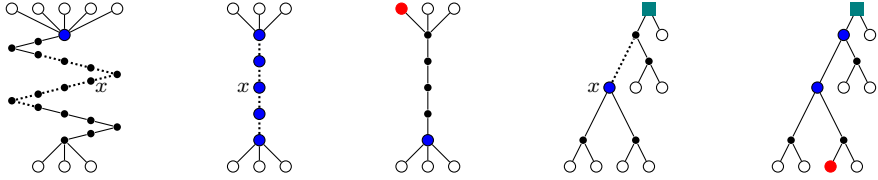

\begin{lemma}\label{lm:balancer} Let $T$ be any tree (rooted, unrooted, redleaf or not), and set $W$ to be the class of white leaves. 
If $T$ has an internal vertex and at least $2$ white leaves, then any
white leaf centroid vertex of $T$ is an internal vertex and $\tau_W(w)\le\left\lfloor\frac{\whiteleaf{T}}{2}\right\rfloor$.
\end{lemma}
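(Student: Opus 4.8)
The plan is to adapt the classical centroid argument to the white-leaf weighting, keeping careful track of the extra vertex $v$ appearing in the definition of $\tau_W$. For adjacent vertices $u,v$ let $f(v,u)$ denote the number of white leaves in the component of $T-v$ containing $u$. I would first record two elementary facts: the additivity relation $f(v,u)+f(u,v)=\whiteleaf{T}$ (deleting the edge $uv$ partitions the white leaves into the two sides), and the reformulation $\tau_W(v)=\bigl(\max_u f(v,u)\bigr)+[v\in W]$, where $[v\in W]$ is $1$ exactly when $v$ is itself a white leaf. Since internal vertices and red leaves are never white leaves, this yields $\tau_W(v)=\max_u f(v,u)$ for internal $v$, whereas for \emph{every} leaf $v$ the unique component of $T-v$ is all of $T-v$, so $\tau_W(v)=\whiteleaf{T}$.

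The combinatorial heart is a uniqueness statement: no vertex $v$ has two neighbors $u_1,u_2$ with $f(v,u_i)>\lfloor\whiteleaf{T}/2\rfloor$, since the two disjoint sides would then contain at least $2\lfloor\whiteleaf{T}/2\rfloor+2\ge\whiteleaf{T}+1$ white leaves, which is impossible. Hence whenever $\tau_W(v)>\lfloor\whiteleaf{T}/2\rfloor$ there is a \emph{unique} heavy neighbor $u$ with $f(v,u)>\lfloor\whiteleaf{T}/2\rfloor$, and this $u$ is internal: a leaf neighbor would have a side consisting of a single vertex, contributing at most one white leaf, which cannot exceed $\lfloor\whiteleaf{T}/2\rfloor\ge 1$ (here I use $\whiteleaf{T}\ge 2$).

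With these in hand I would run a descent. Starting from any internal vertex (one exists by hypothesis), as long as the current vertex $w$ has $\tau_W(w)>\lfloor\whiteleaf{T}/2\rfloor$, move to its unique heavy neighbor $u$. A short computation shows we never backtrack: $f(u,w)=\whiteleaf{T}-f(w,u)\le\whiteleaf{T}-\lfloor\whiteleaf{T}/2\rfloor-1\le\lfloor\whiteleaf{T}/2\rfloor$, so $w$ is not the heavy neighbor of $u$. Consequently the heavy component strictly shrinks at each step (it loses at least the vertex we move to), so after finitely many steps the process halts at an internal vertex $z$ with $\tau_W(z)\le\lfloor\whiteleaf{T}/2\rfloor$. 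In particular $\min_v\tau_W(v)\le\lfloor\whiteleaf{T}/2\rfloor$.

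To finish, I would combine these observations. Every leaf has $\tau_W=\whiteleaf{T}$, and since $\whiteleaf{T}\ge 2$ we have $\whiteleaf{T}>\lfloor\whiteleaf{T}/2\rfloor\ge\min_v\tau_W(v)$; thus no leaf attains the minimum, so every white leaf centroid vertex is internal and satisfies $\tau_W(w)\le\lfloor\whiteleaf{T}/2\rfloor$. The step I expect to be the main obstacle is exactly the bookkeeping in the descent, because the defining term $[v\in W]$ in $\tau_W$ must be separated from the additive quantity $f$; the borderline case to watch is when $\whiteleaf{T}$ is even and some side has precisely $\whiteleaf{T}/2$ white leaves, where no edge is strictly ``heavy'' and the descent simply terminates.
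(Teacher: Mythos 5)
Your proof is correct and follows essentially the same strategy as the paper's: both arguments walk from an internal vertex into the unique ``heavy'' component (the one holding more than $\left\lfloor\whiteleaf{T}/2\right\rfloor$ white leaves) until the weight drops, using finiteness for termination and the fact that every leaf has $\tau_W$ equal to $\whiteleaf{T}$ to rule leaves out as centroid vertices. The only differences are cosmetic: you run the descent constructively with the additive edge-function $f(v,u)+f(u,v)=\whiteleaf{T}$ to exclude backtracking, whereas the paper argues by contradiction from a hypothetical centroid vertex of large $\tau_W$ and tracks containment of successive heavy components along a path.
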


\begin{proof} 
Let $T$ be a tree satisfying the assumptions, $W$ be the set of white leaves, and $n=|W|=\whiteleaf{T}$.
It is clear from the definition that for any leaf $v$ of $T$, $\tau_W(v)=n$. 
Let $v_1,v_2$ be two white leaves of $T$. Since $T$ has an internal vertex, the $v_1$-$v_2$ path in $T$ has at least one internal vertex $u$, and $\tau_W(u)\le n-1$. Therefore, the white leaf centroid of $T$ is an internal vertex. In addition, when $n=2$, then $n-1=\lfloor\frac{n}{2}\rfloor$, and the
theorem holds.

Now for contradiction, assume that $n\ge 3$, and let $u_1$ be an internal vertex with $\tau_W(u_1)\ge\lfloor\frac{n}{2}\rfloor+1\ge 2$ but $\tau_W(u_1)<n$. Note that for any vertex $x$, if $\tau_W(x)>\frac{n}{2}$, there is a unique component $T^{\prime}$ of $T-x$ with $|T^{\prime}\cap W|=\tau_W(x)$. We will define a sequence $u_1,\ldots,u_{k}$  of 
vertices
such that $u_1u_2\ldots u_k$ is a path, $\tau_W(u_1)=\tau_W(u_2)=\cdots=\tau_W(u_{k-1})>\tau_W(u_k)$, and for $i:2\le i\le k$, $u_i$ 
is in the (unique) component $T^{\star}$ of $T-u_{i-1}$ with $|T^{\star}\cap W|=\tau_W(u_1)$. Note that $u_2,\ldots,u_k$ must be a sequence
of internal vertices, as for any leaf $y$ we have $\tau_W(y)=n>\tau_W(u_1)$.
Since $\tau_W(u_k)<\tau_W(u_1)$, $u_1$ is not a white leaf centroid vertex, and this finishes the proof.

Assume that $u_1\ldots u_i$ has been defined for some $i\ge 1$ such that $\tau_W(u_1)=\tau_W(u_2)=\cdots=\tau_W(u_{i})$ and for $i:2\le i\le k$, $u_i$ 
is in the (unique) component $T^{\prime}$ of $T-u_{i-1}$ with $|T^{\prime}\cap W|=\tau_W(u_1)$. 
This means that there is a (unique) component $T^{\star}$ of $T-u_i$ with $|T^{\star}\cap W|=\tau_W(u_1)$, and $T^{\star}\cap \{u_1,\ldots,u_i\}=\emptyset$.
Let $x$ be the vertex of $T^{\star}$ that is adjacent to $u_i$.
As $|T^{\star}\cap W|=\tau_W(u_1)\ge 2$, $x$ is not a leaf of $T$. 
The components of $T-x$ are $T-T^{\star}$ with $|(T-T^{\star})\cap W|=n-\tau_W(u_1)<\tau_W(u_1)$ and
the components of $T^{\star}-x$; therefore $\tau_W(x)\le\tau_W(u_1)$ and $x$ is not on the $u_1\ldots u_i$ path.  
Set $x=u_{i+1}$. If $\tau_W(u_{i+1})<\tau_W(u_1)$, then $k=i+1$ and we can stop. Otherwise, the component 
$T'$ of $T-u_{i+1}$ with
$|T'\cap W|=\tau_W(u_1)$ is a subgraph of $T^{\star}-u_{i+1}$, and consequently $T'\cap\{u_1,\ldots,u_{i+1}\}=\emptyset$. As the sequence $u_1,\ldots,u_{i+1}$ satisfies the conditions, we can repeat the procedure.

The procedure must eventually stop, as the path can not be extended infinitely. The path can not reach a leaf vertex, as it does not contain vertices with $\tau_W$ value greater than $\tau_W(u_1)$. Therefore the procedure stops at an internal vertex with $\tau_W$ value less than $\tau_W(u_1)$.
\end{proof}

It is easy to see that for rooted binary trees a centroid vertex is also a white leaf centroid vertex. For trees in general, this is not the case, so this definition is needed for the results on $d$-ary trees.
Also, for any $n\geq 2$, and any rooted binary tree $T$ with
$n$ leaves, there exists an edge $e$ of $T$ incident with a white leaf centroid, such that the removal of $e$ partitions the $n$ leaves into two classes, such that the number of leaves in both classes is at most $2n/3$, or equivalently, such that the number of leaves in both classes is at least $n/3$. 

The existence of this separator edge was first discovered by P. Erd\H os, R.J. Faudree, C.C. Rousseau, and R.H. Schelp
\cite{EFRS}; and was rediscovered by P. L. Erd\H os,  M.A. Steel, L.A. Sz\'ekely, and T.J. Warnow \cite{artificial}. 

Note that the notion of separator edges can be extended in terms of  partitions of the set of all vertices (rather than just the leaves) into two classes in an analogous way (see Figure~\ref{fig:centroid}).

\section{Bounds on the universal tree size}

\subsection{A polynomial upper bound}
In this subsection, we establish a polynomial upper bound on $u(n)$. The proof of this bound depends on recursive upper bounds connecting $u(n)$ and $u_1(n)$, which will be established in Lemmas~\ref{lm:unhalf} and \ref{lm:u1nhalf}. 

\begin{lemma}\label{lm:unhalf} Let $n\ge 2$, 
$T_1\in\mathcal{U}_1\left(\left\lfloor\frac{n}{2}\right\rfloor\right)$ 
and $T_2, T_3\in\mathcal{U}\left(\left\lfloor\frac{n}{2}\right\rfloor\right)$. Then, $T_1[T_2\oplus T_3]\in\mathcal{U}(n)$ and consequently,
$$u(n)\leq u_1(\lfloor n/2 \rfloor) +2u(\lfloor n/2 \rfloor) .$$
\end{lemma}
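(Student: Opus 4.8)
The plan is to prove the two assertions separately, treating the inequality as an easy consequence of the universality claim. Indeed, by the additivity remarks recorded right after the definitions of $\oplus$ and $[\cdot]$, we have $\whiteleaf{T_1[T_2\oplus T_3]}=\whiteleaf{T_1}+\whiteleaf{T_2}+\whiteleaf{T_3}$, so once $T_1[T_2\oplus T_3]\in\mathcal U(n)$ is known, choosing $T_1,T_2,T_3$ of minimum size in their respective universal classes yields $u(n)\le u_1(\lfloor n/2\rfloor)+2u(\lfloor n/2\rfloor)$. Thus the whole content is to show that an arbitrary $n$-leaf rooted binary tree $T$ occurs as an induced binary subtree of $T_1[T_2\oplus T_3]$.

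To embed such a $T$, I would cut it at a white leaf centroid $w$. Since all leaves of $T$ are white, $\whiteleaf{T}=n$, and by Lemma~\ref{lm:balancer} the centroid $w$ is an internal vertex with $\tau_W(w)\le\lfloor n/2\rfloor$. Let $S$ be the subtree of $T$ rooted at $w$; as $w$ is internal, $S=L\oplus R$ for its two principal subtrees $L,R$. Let $U$ be the r-$2$-r tree obtained from $T$ by deleting $S$ and recoloring $w$ red, so that $T=U[S]$. The three components of $T-w$ are the ``upper'' part (whose white leaves are exactly the white leaves of $U$), together with $L$ and $R$, and the bound $\tau_W(w)\le\lfloor n/2\rfloor$ says precisely that each of $\whiteleaf{U},\whiteleaf{L},\whiteleaf{R}$ is at most $\lfloor n/2\rfloor$. (When $w=r_T$ the upper part is empty and $U$ is a single red leaf; this degenerate case is handled identically.) This centroid-based split, producing three pieces each of size at most $\lfloor n/2\rfloor$, is the heart of the argument.

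Next I would embed the pieces independently. Since $T_2,T_3\in\mathcal U(\lfloor n/2\rfloor)$ and $\whiteleaf{L},\whiteleaf{R}\le\lfloor n/2\rfloor$, I can realize $L$ inside $T_2$ and $R$ inside $T_3$; because these use leaves on opposite sides of the new root $r_{T_2\oplus T_3}$, their combined leaf set induces exactly $L\oplus R=S$ with root at $r_{T_2\oplus T_3}$. Since $T_1\in\mathcal U_1(\lfloor n/2\rfloor)$ and $\whiteleaf{U}\le\lfloor n/2\rfloor$, I can realize $U$ inside $T_1$ with its red leaf mapped to $\ell_{T_1}$: induced binary subtrees preserve the red coloring, so the chosen leaf set must contain $\ell_{T_1}$, which forces this match. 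If $\whiteleaf{U}<\lfloor n/2\rfloor$, I would first pad $U$ up to exactly $\lfloor n/2\rfloor$ white leaves by repeatedly forming $C_1\oplus(\cdot)$, which keeps $U$ as an induced (red-leaf-preserving) subtree, and then invoke universality together with transitivity of the induced-subtree relation.

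Finally I would glue the two embeddings. In $T_1[T_2\oplus T_3]$ the vertex $\ell_{T_1}$ is identified with $r_{T_2\oplus T_3}$, and I take the union of the two leaf sets selected above. Every path between a leaf used for $U$ and a leaf used for $S$ passes through this identified vertex, so after suppressing degree-$2$ vertices it becomes a degree-$3$ vertex carrying the $U$-structure above and $S=L\oplus R$ below, which is exactly the identification of $\ell_U$ with $r_S$ performed by $U[S]$; hence the induced binary subtree is $U[S]=T$. I expect this last compositionality step to be the main obstacle: one must check carefully that two independently chosen induced subtrees merge, after suppression, into $U[S]$ and that the red leaf of $U$ lands precisely at the graft point $\ell_{T_1}=r_{T_2\oplus T_3}$. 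Once this ``embeddings compose along $\oplus$ and $[\cdot]$'' principle is established, the remainder is bookkeeping.
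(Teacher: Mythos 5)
Your proposal is correct and follows essentially the same argument as the paper: cut the target tree at a white leaf centroid into an r-$2$-r top piece and the two subtrees below (each of size at most $\lfloor n/2\rfloor$ by Lemma~\ref{lm:balancer}), embed the pieces into $T_1$, $T_2$, $T_3$ respectively, and recombine via $F=F_1[F_2\oplus F_3]$. The two points you flag as needing care --- padding pieces with fewer than $\lfloor n/2\rfloor$ white leaves before invoking universality, and verifying that independent embeddings compose along $\oplus$ and $[\cdot]$ --- are exactly the steps the paper leaves implicit, and your sketches of both are sound.
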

\begin{figure}[htb]
\centering
\begin{tikzpicture}[scale=.8,font=\tiny]
	        \draw[black, pat1] (5.5,2.7)--(4.5,1.7)--(6.5,1.7)--(5.5,2.7);      
	        \node[rootvertex]  at (5.5,2.7) {};
		        \node at (5.5,3) {$r_{T}=r_{T_1}$};
	        \node at (5,1.4) {$\ell_{T_1}$};
	        \node at (5.5,2.1) {$T_1$};	        
	        \draw  (3.75,1)--(5,1.7)--(6.25,1);
	        \node[rleaf]   at (5,1.7) {}; %*
	        \node at (3.3,1) {$r_{T_2}$};	        	        
	        \draw[black, pat2] (3.75,1)--(2.75,0)--(4.75,0)--(3.75,1);      
	        \node[rootvertex]  at (3.75,1) {};
	        \node at (3.75,0.35) {$T_2$};  
	        \node at (5.85,1) {$r_{T_3}$};	        	        	        
	        \draw[pat3] (6.25,1)--(5.25,0)--(7.25,0)--(6.25,1);      
	        \node[rootvertex]  at (6.25,1) {};
	        \node at (6.25,.35) {$T_3$};    
	        \node at (5,-0.5) {$T=T_1[T_2\oplus T_3]$};      
	        \draw[dashed] (8,-1)--(8,3.9);	
		%-------------------------------------------------
		\draw[draw=gray!40,rounded corners=4pt,  pat1] (10.75,3.75)--(10,2.9)--(11.5,2.9)--cycle;    
		\draw[draw=gray!40,rounded corners=4pt,  pat2] (9.75,2.75)--(9.3,2.15)--(10.2,2.15)--cycle;    
		\draw[draw=gray!40,rounded corners=4pt,  pat3] (10.75,2.75)--(10.3,2.15)--(11.2,2.15)--cycle;    				
		\node at (11.4,2.5) {$F_3$};							
		\node at (9.1,2.5) {$F_2$};	
		\node at (9.75,3.4) {$F_1$};		
	        \node at (10.75,3.8) {$r_{F}$};	  
	        \draw (10.25,3.05)--(10.5,3.3);
	        \draw (10.5,3.3)--(10.75,3.55)--(11.25,3.05);
	        \draw (10.75,3.05)--(10.5,3.3); 
	        \draw (9.5,2.3)--(9.75,2.55)--(10,2.3);
	       \draw (9.75,2.55)--(10.25,3.05)--(11,2.3);
	       \draw (10.72,2.55)--(10.5,2.3);	              
	        \node[invertex]  at (10.25,3.05) {}; %*
	        \node[wleaf]   at (10.75,3.05) {}; %*
	        \node[wleaf]  at (11.25,3.05) {}; %*
	        \node[invertex]  at (10.5,3.3) {}; %*	  
	        \node at (10,3.05) {$v$};    
	        \node at (9.5,2.525) {$x$};  
	        \node at (10.5,2.55) {$y$};      	        	                	        	          	   	            	        	                	        	          	        	                	        
	        \node[rootvertex]  at (10.75,3.55) {};
	        \node[invertex]  at (9.75,2.55) {}; 
	        \node[invertex]  at (10.75,2.55) {}; 
		  \node[wleaf]  at (9.5,2.3) {}; %* 	                  
		  \node[wleaf]  at (10,2.3) {}; %* 	                  
		  \node[wleaf]  at (10.5,2.3) {}; %* 	                  
		  \node[wleaf]  at (11,2.3) {}; %* 	                  
	        \node at (10,1.9) {the tree $F$};
	        %---------------------------
	        \node at (10.5,1.2) {$r_{F_1}=r_F$};	        
	        \node at (9.5,0.3) {$y=\ell_{F_1}$};
	        \draw (10,0.5)--(10.5,1)--(11,0.5);
	        \draw (10.5,0.5)--(10.75,0.725);	        
	        \node[wleaf]   at (10.5,0.5) {}; %*
	        \node[wleaf]  at (11,0.5) {}; %*
	        \node[invertex]  at (10.75,0.725) {}; %*	        	        	                	  
	        \node[rleaf]  at (10,0.5) {}; %*	        
	        \node[rootvertex]  at (10.5,1) {};
	        \node at (9.5, .8) {$F_1$:};
		%----------------------
		\node at (9.5,-.4) {$r_{F_2}=x$};	  
	        \draw (9.75,-0.95)--(9.5,-0.7)--(9.25,-0.95);		      
	        \node[wleaf]  at (9.75,-0.95) {}; %*
	        \node[wleaf]  at (9.25,-0.95) {}; %*
	       \node[rootvertex]  at (9.5,-0.7) {};	        
	        \node at (8.8, -0.8) {$F_2$:};
	        %------------------
		\node at (11.2,-0.4) {$r_{F_3}=y$};
	        \draw (10.95,-0.95)--(11.2,-0.7)--(11.45,-0.95);			        
	        \node[wleaf]  at (11.45,-0.95) {}; %*
	        \node[wleaf]  at (10.95,-0.95) {}; %*
	       \node[rootvertex]  at (11.2,-0.7) {};	        
	        \node at (10.5, -0.8) {$F_3$:};
	        %------------------

\end{tikzpicture}
\caption{Illustration of Lemma~\ref{lm:unhalf}.}\label{fig:unhalf}
\end{figure}
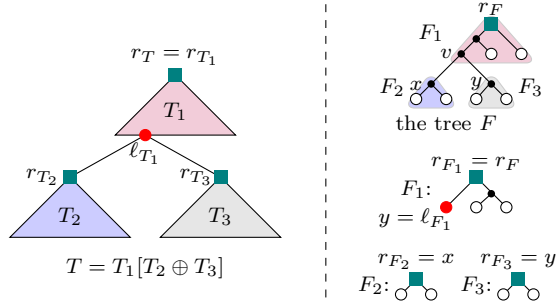

\begin{proof} 
Let  $T_1\in\mathcal{U}_1\left(\left\lfloor\frac{n}{2}\right\rfloor\right)$ and $T_2,T_3\in\mathcal{U}\left(\left\lfloor\frac{n}{2}\right\rfloor\right)$
Let $F$ be a rooted binary tree with $n$ leaves (and consequently $2n-2$ edges). Let $v$ be a white leaf centroid vertex of $F$, and let $x,y$ be the children of $v$. Let $F_1$ be the r-$2$-r tree obtained from $F$ by removing the descendants of $v$ and setting $\ell_{F_1}=v$, and let $F_2$ and $F_3$ be the subtrees rooted in $x$ and $y$, respectively. By Lemma~\ref{lm:balancer}, for each $i\in\{1,2,3\}$, we have 
$\whiteleaf{F_i}\le\lfloor\frac{n}{2}\rfloor$, and consequently
$F_i$ is an induced binary subtree of $T_i$. Note that $F_2,F_3$ each have at least one leaf and if $v=r_F$, then $F_1$ consists of a single red leaf and does not have white leaves.
As $F=F_1[F_2\oplus F_3]$, $F$ is an induced binary subtree of $T_1[T_2\oplus T_3]$. In particular, $T_1[T_2\oplus T_3] \in \mathcal{U}(n)$, which completes the proof.
\end{proof}

\begin{lemma}\label{lm:u1nhalf} 
Let $n\ge 2$, $T_1,T_2\in\mathcal{U}_1\left(\left\lfloor\frac{n}{2}\right\rfloor\right)$ and $T_3\in\mathcal{U}(n)$. Then, $T_1[T_2\oplus T_3]\in\mathcal{U}_1(n)$, and consequently
\begin{eqnarray}
u_1(n)&\leq& 2u_1(\lfloor n/2 \rfloor) +u(n) \label{eq:u1nhalfstart}\\
&\leq&3u_1(\lfloor n/2 \rfloor) +2u(\lfloor n/2 \rfloor). \label{eq:u1nhalf}
\end{eqnarray}
\end{lemma}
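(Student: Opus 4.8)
The plan is to mirror the decomposition in Lemma~\ref{lm:unhalf}, but to choose the splitting vertex so that the red leaf of the tree we want to embed lands in the correct piece. Write $A\hookrightarrow B$ for ``$A$ is an induced binary subtree of $B$''. Let $F$ be an arbitrary r-$2$-r tree with $n$ white leaves and red leaf $\ell_F$; I must show $F\hookrightarrow T_1[T_2\oplus T_3]$ with $\ell_F$ sent to the red leaf. The target has $T_1$ on top, and where the red leaf of $T_1$ sat we now branch into $T_2$ and $T_3$, with the global red leaf living in $T_2$. Accordingly I want to write $F=F_1[F_2\oplus F_3]$ where $F_1$ is an r-$2$-r tree whose red leaf is the attachment vertex, $F_2$ is an r-$2$-r tree carrying $\ell_F$, and $F_3$ is an uncolored rooted binary tree, with $\whiteleaf{F_1},\whiteleaf{F_2}\le\lfloor n/2\rfloor$ and $\whiteleaf{F_3}\le n$. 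Then $F_1\hookrightarrow T_1$, $F_2\hookrightarrow T_2$, $F_3\hookrightarrow T_3$, and the compositionality of $\oplus$ and $[\,\cdot\,]$ with respect to induced subtrees (the fact invoked at the end of Lemma~\ref{lm:unhalf}) yields $F\hookrightarrow T_1[T_2\oplus T_3]$, as desired.

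The one genuine obstacle, and the difference from Lemma~\ref{lm:unhalf}, is that $F$ already has a red leaf, so the splitting vertex $v$ must be a strict ancestor of $\ell_F$; otherwise the top piece $F_1$ (whose red leaf is $v$) would inherit $\ell_F$ as an illegal second red leaf. The white-leaf centroid used in Lemma~\ref{lm:unhalf} need not be an ancestor of $\ell_F$, so I would replace it by a crossover argument along the root-to-red-leaf path. Write the path as $r_F=p_0,p_1,\dots,p_t=\ell_F$ and set $a_i=\whiteleaf{\text{subtree}(p_i)}$. Then $a_0=n$, $a_t=0$, and the sequence is non-increasing; let $i^\ast$ be the largest index with $a_{i^\ast}\ge\lceil n/2\rceil$. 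One checks $i^\ast<t$ and $a_{i^\ast+1}\le\lceil n/2\rceil-1\le\lfloor n/2\rfloor$. Taking $v=p_{i^\ast}$, with children $p_{i^\ast+1}$ (toward $\ell_F$) and $q$, I set $F_2=\text{subtree}(p_{i^\ast+1})$, $F_3=\text{subtree}(q)$, and $F_1=F$ with the descendants of $v$ deleted and $v$ recolored red. Then $\whiteleaf{F_2}=a_{i^\ast+1}\le\lfloor n/2\rfloor$, $\whiteleaf{F_1}=n-a_{i^\ast}\le n-\lceil n/2\rceil=\lfloor n/2\rfloor$, the leaf $\ell_F$ lies in $F_2$ but not in $F_3$, and $F=F_1[F_2\oplus F_3]$, exactly as required.

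What remains is bookkeeping: $v$ is internal since it has the child $p_{i^\ast+1}$; the recoloring makes $F_1,F_2$ into r-$2$-r trees and $F_3$ uncolored; and the trivial edge cases are harmless (e.g.\ when $i^\ast=0$, $F_1$ is a single red leaf and $F=F_2\oplus F_3$ embeds directly into the $T_2\oplus T_3$ part). As in Lemma~\ref{lm:unhalf}, I use that a universal tree absorbs every smaller tree of its type. Finally, the two displayed inequalities are immediate: choosing $T_1,T_2$ to realize $u_1(\lfloor n/2\rfloor)$ and $T_3$ to realize $u(n)$ gives $u_1(n)\le 2u_1(\lfloor n/2\rfloor)+u(n)$ by the additivity of $\whiteleaf{\cdot}$ under $\oplus$ and $[\,\cdot\,]$, and substituting $u(n)\le u_1(\lfloor n/2\rfloor)+2u(\lfloor n/2\rfloor)$ from Lemma~\ref{lm:unhalf} yields the second inequality.
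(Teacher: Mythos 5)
Your proof is correct, and it reaches the decomposition $F=F_1[F_2\oplus F_3]$ by a genuinely different mechanism than the paper. The paper reuses the white-leaf centroid of Lemma~\ref{lm:balancer}: it takes a centroid vertex $v$ and distinguishes two cases according to whether $\ell_F$ is a descendant of $v$; when it is not, the split point is moved to the last common ancestor of $v$ and $\ell_F$, so that the top piece and the piece containing $\ell_F$ together have at most $\lfloor n/2\rfloor$ white leaves, while the remaining piece --- which may have far more than $\lfloor n/2\rfloor$ white leaves --- is absorbed by $T_3\in\mathcal{U}(n)$. Your crossover argument instead chooses the split vertex directly on the $r_F$--$\ell_F$ path, as the last vertex whose subtree retains at least $\lceil n/2\rceil$ white leaves; this forces the split vertex to be a strict ancestor of $\ell_F$ by construction, so the case analysis and the appeal to Lemma~\ref{lm:balancer} disappear, and the bounds $\whiteleaf{F_1}\le n-\lceil n/2\rceil=\lfloor n/2\rfloor$ and $\whiteleaf{F_2}\le\lceil n/2\rceil-1\le\lfloor n/2\rfloor$ come straight out of the extremal choice, with the possibly large uncolored piece again sent to $T_3$ (both proofs need $T_3\in\mathcal{U}(n)$ rather than $\mathcal{U}(\lfloor n/2\rfloor)$ for exactly this reason). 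What the paper's route buys is uniformity: its proof runs parallel to that of Lemma~\ref{lm:unhalf} with the same centroid tool. What your route buys is economy: it is self-contained and avoids the two cases. One small point worth stating explicitly (the paper also leaves it implicit) is the monotonicity you invoke as ``a universal tree absorbs every smaller tree of its type'': a tree in $\mathcal{U}(m)$ or $\mathcal{U}_1(m)$ contains every tree of the corresponding kind with \emph{at most} $m$ white leaves, since any such tree extends to one with exactly $m$ white leaves and induced-subtree containment is transitive. With that noted, your degenerate cases (e.g.\ $F_1$ or $F_2$ being a lone red leaf) are handled correctly, and the two displayed inequalities follow exactly as in the paper.
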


\begin{figure}[htb]
\centering
\begin{tikzpicture}[scale=.8,font=\tiny]
	        \draw[black, pat1] (5.5,1.75)--(4.5,0.75)--(6.5,0.75)--(5.5,1.75);      
	        \node[rootvertex]  at (5.5,1.75) {};
		\node at (5.5,1.9) {$r_{T}=r_{T_1}$};
	        \node at (5,0.45) {$\ell_{T_1}$};
	        \node at (5.5,1) {$T_1$};	        
	        \draw  (3.75,.25)--(5,0.75)--(6.25,.25);
	        \node[rleaf]   at (5,0.75) {}; %*
	        \node at (3.3,.25) {$r_{T_2}$};	        	        
	        \draw[black, pat2] (3.75,.25)--(2.75,-0.75)--(4.75,-0.75)--(3.75,.25);      
	        \node[rootvertex]  at (3.75,.25) {};
		\node[rleaf]   at (3.25,-0.75) {}; %*
	        \node at (3.25,-1) {$\ell_{T_2}$};	        	     		
	        \node at (3.75,-0.5) {$T_2$};  
	        \node at (5.85,.25) {$r_{T_3}$};	        	        	        
	        \draw[black, pat3] (6.25,.25)--(5.,-1)--(7.5,-1)--(6.25,.25);      
	        \node[rootvertex]  at (6.25,.25) {};
	        \node at (6.25,-0.5) {$T_3$};      
	        \node at (5,-1.5) {$T=T_1[T_2\oplus T_3]$};      
	        \draw[dashed] (8,-3.5)--(8,3.5);	
		%-------------------------------------------------
		\draw[draw=gray!40,rounded corners=4pt,  pat1] (11.5,3.2)--(10.3,1.9)--(12.7,1.9)--cycle;    
		\draw[draw=gray!40,rounded corners=4pt,  pat2] (10.25,1.85)--(9.2,.9)--(11.3,.9)--cycle;    
		\node[circle, draw=gray!40, pat3, inner sep=3.5pt] at (11.5,1) {};			
		\node at (12,1) {$F_3$};							
		\node at (9.3,1.5) {$F_2$};	
		\node at (10.5,2.7) {$F_1$};		
	        \node at (11.5,3.25) {$r_{F}$};
	        \draw (11,2)--(11.25,2.25)--(11.5,2);     	        	                	        	          	   	           	        	                	        	          	        	                	        
	       \draw (11.75,2.75)--(11.5,3)--(11,2.5);
	        \draw (11.5,2.5)--(11.74,2.75)--(12,2.5);
	        \draw (10.5,2)--(11,2.5)--(11.25,2.25); 
	        \draw (10.25,1.75)--(10.5,2)--(11.5,1);	   
	       \draw (9.75,1.25)--(10.25,1.75)--(11,1);
	       \draw (10.72,1.25)--(10.5,1);	       
	        \draw (9.5,1)--(9.75,1.25)--(10,1);	             	  
	        \node[invertex]   at (11.75,2.75) {}; %*       
	        \node[wleaf]   at (12,2.5) {}; %*	              
	        \node[wleaf]   at (11.5,2.5) {}; %*
	        \node[invertex]  at (11,2.5) {}; %*	        
	        \node[invertex]  at (11.25,2.25) {}; %*
	        \node[wleaf]  at (11,2) {}; %*  
	        \node[wleaf]  at (11.5,2) {}; %*  	        	        	        	        	        	      
	        \node at (10.25,2) {$v$};    
	        \node at (10,1.75) {$x$};  
	        \node at (11.5,.7) {$y$}; 
	        \node[invertex]  at (10.25,1.75) {}; %*
	        \node[invertex]  at (10.5,2) {}; %*  	        	        
	        \node[rootvertex]  at (11.5,3) {};
	        \node[wleaf]  at (9.5,1) {}; 
	        \node[invertex]  at (9.75,1.25) {}; 
	        \node[invertex]  at (10.75,1.25) {}; 
		  \node[wleaf]  at (9.5,1) {}; %* 	                  
		  \node[wleaf]  at (10,1) {}; %* 	                  
		  \node[wleaf]  at (10.5,1) {}; %* 	                  
	        \node[wleaf]  at (11.5,1) {}; %*	        	      		  	                  
		  \node[rleaf]  at (11,1) {}; %* 	
	        \node at (11,.7) {$\ell_F$}; 
		         
	        \node at (10.5,.3) {$\ell_F$ is a descendant of $v$};
	        %----------------------------------------
	        \node at (11,-.25) {$r_{F_1}$};	  
	        \draw (10.5,-1.5)--(10.75,-1.25)--(11,-1.5);     	        	                	        	          	   	   	        	                	        	          	        	                
	        \draw (11.25,-.75)--(11,-.5)--(10.5,-1);
	        \draw (11,-1)--(11.25,-.75)--(11.5,-1);
	        \draw (10,-1.5)--(10.5,-1)--(10.75,-1.25);         	        
	        \node[invertex]   at (11.25,-.75) {}; %*       
	        \node[wleaf]   at (11.5,-1) {}; %*	              
	        \node[wleaf]   at (11,-1) {}; %*
	        \node[invertex]  at (10.5,-1) {}; %*	        
	        \node[invertex]  at (10.75,-1.25) {}; %*
	        \node[wleaf]  at (10.5,-1.5) {}; %*  
	        \node[wleaf]  at (11,-1.5) {}; %*  	        	        	        	        	        	      
	       \node[rleaf]  at (10,-1.5) {}; %*  	        	        	  	        
	       \node[rootvertex]  at (11,-.5) {};
	        \node at (9.75,-1.8) {$v=\ell_{F_1}$}; 
	        \node at (10, -0.85) {$F_1$};
		%----------------------
	        \node at (10.25,-2.35) {$r_{F_2}=x$};	
	        \draw (9.5,-3.25)--(9.75,-3)--(10,-3.25);	         
	       \draw (10.5,-3.25)--(10.75,-3)--(11,-3.25);
	       \draw (9.75,-3)--(10.25,-2.5)--(10.75,-3);	         		       
	        \node[invertex]  at (9.75,-3) {}; 
	        \node[invertex]  at (10.75,-3) {}; 
		  \node[wleaf]  at (9.5,-3.25) {}; %* 	                  
		  \node[wleaf]  at (10,-3.25) {}; %* 	                  
		  \node[wleaf]  at (10.5,-3.25) {}; %* 	                   	                  
	        \node[rootvertex]  at (10.25,-2.5) {};	       
		  \node[rleaf]  at (11,-3.25) {}; %* 	
	        \node at (11.5,-3.5) {$\ell_{F_2}=\ell_F$}; 
	        \node at (9.4,-2.65) {$F_2$}; 
	        %-----------------------------------------------
	        \node at (12.2,-1.95) {$r_{F_3}=y$};	  				
	        \node[whiteroot]  at (12.2,-2.25) {};	       
	        \node at (11.7,-2.25) {$F_3$}; 
	        \draw[dashed] (13.5,-3.5)--(13.5,3.5);	
		%-------------------------------------------------
		\draw[draw=gray!40,rounded corners=4pt,  pat1] (17,3.2)--(16.3,2.4)--(17.7,2.4)--cycle;    
		\draw[draw=gray!40,rounded corners=4pt,  pat2] (16.75,2.4)--(16.3,1.85)--(17.15,1.85)--cycle;   
		\draw[draw=gray!40,rounded corners=4pt,  pat3] (16,2.2)--(14.8,.9)--(17.2,.9)--cycle;    
		\node at (14.8,1.5) {$F_3$};							
		\node at (17.2,2.15) {$F_2$};	
		\node at (16.26,2.9) {$F_1$};		
	        \node at (17,3.25) {$r_{F}$};	
	        \draw (16.5,2)--(16.75,2.25)--(17,2);     	      	        
	        \draw (17.25,2.75)--(17,3)--(16.5,2.5);
	        \draw (17,2.5)--(17.25,2.75)--(17.5,2.5);
	        \draw (16,2)--(16.5,2.5)--(16.75,2.25); 
	        \draw (15.75,1.75)--(16,2)--(17,1);	
	        \draw (15,1)--(15.25,1.25)--(15.5,1);
	       \draw (15.25,1.25)--(15.75,1.75)--(16.5,1);
	       \draw (16.22,1.25)--(16,1);	        	                  
	        \node[invertex]   at (17.25,2.75) {}; %*       
	        \node[wleaf]   at (17.5,2.5) {}; %*	              
	        \node[wleaf]   at (17,2.5) {}; %*
	        \node[invertex]  at (16.5,2.5) {}; %*	        
	        \node[invertex]  at (16.75,2.25) {}; %*
	        \node[wleaf]  at (16.5,2) {}; %*  
	        \node at (15.25,2) {$v=y$};    
	        \node at (16.5,2.25) {$x$};  
	        \node at (16.1,2.5) {$z$}; 
	        \node[rleaf]  at (17,2) {}; %*  	        	        	        	        	   	      	        
	        \node[invertex]  at (15.75,1.75) {}; %*
	        \node[invertex]  at (16,2) {}; %*  	        	        
	        \node[rootvertex]  at (17,3) {};
	        \node[wleaf]  at (15,1) {}; 
	        \node[invertex]  at (15.25,1.25) {}; 
	        \node[invertex]  at (16.25,1.25) {}; 
		  \node[wleaf]  at (15,1) {}; %* 	                  
		  \node[wleaf]  at (15.5,1) {}; %* 	                  
		  \node[wleaf]  at (16,1) {}; %* 	                  
	        \node[wleaf]  at (17,1) {}; %*	        	      		  	                  
		  \node[wleaf]  at (16.5,1) {}; %* 			         
	        \node at (16.5,.3) {$\ell_F$ is not a descendant of $v$};
	        %----------------------------------------
	        \node at (16,-.25) {$r_{F_1}$};	  
	        \draw (16.5,-1)--(16,-.5)--(15.5,-1);
	        \draw (16,-1)--(16.25,-.75)--(16.5,-1);        	        
	        \node[invertex]   at (16.25,-.75) {}; %*       
	        \node[wleaf]   at (16.5,-1) {}; %*	              
	        \node[wleaf]   at (16,-1) {}; %* 	        	          	   	            	   	                	        	          	        	                
	       \node[rleaf]  at (15.5,-1) {}; %*  	        	        	  	        
	       \node[rootvertex]  at (16,-.5) {};
	        \node at (15,-1.3) {$z=\ell_{F_1}$}; 
	        \node at (15.3, -.5) {$F_1$};
		%----------------------
	        \node at (17.85,-1.5) {$r_{F_2}=x$};	 
	       \draw (17.6,-2)--(17.85,-1.75)--(18.1,-2);  	         
	        \node[wleaf]   at (17.6,-2) {}; %* 
	       \node[rootvertex]  at (17.85,-1.75) {};	
	        \node[rleaf]   at (18.1,-2) {}; %*	              
	        \node at (17.2,-1.8) {$F_2$};	               	        	          	   	            	        	      		
		%----------------------
	        \node at (16,-2.3) {$r_{F_3}=y$};	  		
               \draw (15.75,-2.75)--(16,-2.5)--(17,-3.5);
	        \draw (15,-3.5)--(15.25,-3.25)--(15.5,-3.5);               
	       \draw (15.25,-3.25)--(15.75,-2.75)--(16.5,-3.5);
	       \draw (16.25,-3.25)--(16,-3.5);
	        \node[invertex]  at (15.75,-2.75) {}; %*	        	        
	        \node[invertex]  at (15.25,-3.25) {}; 
	        \node[invertex]  at (16.25,-3.25) {}; 
		  \node[wleaf]  at (15,-3.5) {}; %* 	                  
		  \node[wleaf]  at (15.5,-3.5) {}; %* 	                  
		  \node[wleaf]  at (16,-3.5) {}; %* 	                  
	        \node[wleaf]  at (17,-3.5) {}; %*	        	      		  	                  
	        \node[rootvertex]  at (16,-2.5) {};	       
		  \node[wleaf]  at (16.5,-3.5) {}; %* 	
	 	        \node at (15,-2.75) {$F_3$}; 
	        %-----------------------------------------------
\end{tikzpicture}
\caption{Illustration of Lemma~\ref{lm:u1nhalf}. Note that while in this example, we have $v=y$ in the case that $\ell_F$ is not a descendant of $v$, this is not true in general.}\label{fig:u1nhalf}
\end{figure}
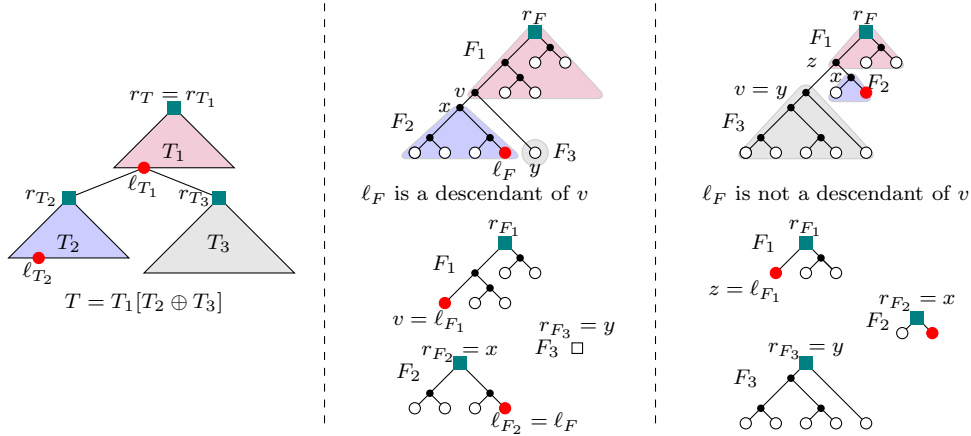

\begin{proof}
It is enough to prove Equation~\eqref{eq:u1nhalfstart}, as Equation~\eqref{eq:u1nhalf} then follows from Lemma~\ref{lm:unhalf}.
Let $F$ be a redleaf tree with $n$ white leaves, and let $v$ be a white leaf centroid vertex of $F$.

If $\ell_F$ is a descendant of $v$, let $x,y$ be the two children of $v$ such that $x$  is 
on the $v$-$\ell_F$ path. Let $F_1$ be the redleaf tree
obtained from $F$ by removing the descendants of $v$ and setting $\ell_{F_1}=v$, and let $F_2,F_3$ be the subtrees of $F$ rooted in $x$ and $y$, respectively. We have that $F_2$ is a redleaf tree. By Lemma~\ref{lm:balancer}, for each $i\in\{1,2,3\}$, we have  $\whiteleaf{F_i}\le\lfloor\frac{n}{2}\rfloor$,
so $F_i$ is an induced binary subtree of $T_i$. As $F=F_1[F_2\oplus F_3]$, $F$ is an induced binary subtree of $T_1[T_2\oplus T_3]$.

So assume $\ell_F$ is not a descendant of $v$. Let $z$ be the last common ancestor of $\ell_F$ and $v$,
and let $x,y$ be the two children of $z$ such that $x$ is on the $z$-$\ell_F$ path. Let
$F_1$ be the redleaf tree obtained from $F$ by removing the descendants of $z$ and setting $\ell_{F_1}=z$, let
$F_2$ be the redleaf tree rooted in $x$, and let $F_3$ be the subtree rooted in $y$. As $\whiteleaf{F_1}+\whiteleaf{F_2}\le\lfloor\frac{n}{2}\rfloor$ and $\whiteleaf{F_3} \le \lfloor\frac{n}{2}\rfloor$ by Lemma \ref{lm:balancer}, we
have that for each $i\in\{1,2,3\}$ $F_i$ is an induced binary subtree of $T_1$. 
As $F=F_1[F_2\oplus F_3]$, $F$ is an induced binary subtree of $T_1[T_2\oplus T_3]$. In particular, $T_1[T_2\oplus T_3] \in \mathcal{U}_1(n)$, which completes the proof.
\end{proof}

The preceding lemmas combine  into the following recursions in two unknown functions:
\begin{eqnarray*}
   u(n) & \leq & u_1(\lfloor n/2 \rfloor)+2u(\lfloor n/2 \rfloor) \\
   u_1(n) & \leq & 3u_1(\lfloor n/2 \rfloor) + 2u(\lfloor n/2 \rfloor).
\end{eqnarray*}
We will apply the recursions above repeatedly to get upper bounds for $u(n)$, which will lead to
two sequences $\{a_i\}$ and $\{b_i\}$ forming the coefficients of the functions $u_1$ and $u$ in the resulting upper bound.

\begin{lemma}\label{lm:halfsequence} Let the sequences $a_1,a_2,\ldots$ and $b_1,b_2,\ldots$ be given by $a_1=1$, $b_1=2$, and for $n\ge 2$ $a_{n}=3a_{n-1}+b_{n-1}$
and $b_{n}=2a_{n-1}+2b_{n-1}$. Then $b_n=\frac{5}{6}\cdot 4^n-\frac{4}{3}$.
\end{lemma}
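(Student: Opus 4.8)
The plan is to view this as a coupled pair of first-order constant-coefficient recurrences and to decouple it into a single second-order recurrence for the sequence $b_n$ alone, which can then be solved in closed form. Equivalently one can write $\left(\begin{smallmatrix} a_n \\ b_n \end{smallmatrix}\right) = M \left(\begin{smallmatrix} a_{n-1} \\ b_{n-1} \end{smallmatrix}\right)$ with $M = \left(\begin{smallmatrix} 3 & 1 \\ 2 & 2 \end{smallmatrix}\right)$ and diagonalize $M$; since $\operatorname{tr} M = 5$ and $\det M = 4$, its characteristic polynomial is $\lambda^2 - 5\lambda + 4 = (\lambda-1)(\lambda-4)$, so the eigenvalues are $1$ and $4$. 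I will carry out the elimination route explicitly, as it keeps the bookkeeping one-dimensional.

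First I would eliminate the $a$-terms. Solving the second recurrence $b_n = 2a_{n-1} + 2b_{n-1}$ for $a_{n-1}$ gives $a_{n-1} = \tfrac12 b_n - b_{n-1}$, and shifting the index gives $a_n = \tfrac12 b_{n+1} - b_n$. Substituting both expressions into the first recurrence $a_n = 3a_{n-1} + b_{n-1}$ and clearing the factor $\tfrac12$ produces the homogeneous second-order recurrence $b_{n+1} = 5b_n - 4b_{n-1}$, valid for $n \ge 2$. This elimination, together with the attendant index shift, is the only place where any care is required; everything after it is routine.

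Next, the characteristic equation $x^2 - 5x + 4 = 0$ of this recurrence has roots $x = 1$ and $x = 4$ (matching the eigenvalues of $M$), so every solution has the form $b_n = A \cdot 4^n + B$ for constants $A$ and $B$. I would pin these down from two initial data: $b_1 = 2$, and $b_2 = 2a_1 + 2b_1 = 6$ obtained by running the recurrence one step from $a_1 = 1$, $b_1 = 2$. Solving the resulting $2 \times 2$ linear system in $A$ and $B$ then determines them and yields the stated closed form.

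For the actual write-up I would reverse this logic to obtain a short self-contained argument: having the closed-form candidate in hand, I would verify the base cases $n = 1, 2$ by direct substitution and then observe that any expression of the form $A \cdot 4^n + B$ automatically satisfies $b_{n+1} = 5b_n - 4b_{n-1}$, so that strong induction propagates the formula to all $n$. I anticipate no genuine obstacle; this is elementary linear-recurrence theory, and the only items worth double-checking are the base cases and the index alignment in the elimination step.
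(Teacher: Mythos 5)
Your method coincides with the paper's own proof: solve the second recursion for $a_{n-1}$, eliminate to obtain $b_{n+1}=5b_n-4b_{n-1}$ (the paper phrases this as $2a_n=3b_n-4b_{n-1}$ and then $b_n=5b_{n-1}-4b_{n-2}$ for $n\ge 3$), observe the characteristic roots $1$ and $4$, and fit $b_n=A\cdot 4^n+B$ to the initial data $b_1=2$, $b_2=6$. All of that is sound, and your index bookkeeping in the elimination is correct. The gap is precisely the one step you left unexecuted: solving the $2\times 2$ system. From $4A+B=2$ and $16A+B=6$ one gets $A=\frac{1}{3}$, $B=\frac{2}{3}$, so the elimination argument proves $b_n=\frac{1}{3}\cdot 4^n+\frac{2}{3}$, \emph{not} the stated $\frac{5}{6}\cdot 4^n-\frac{4}{3}$. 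The stated formula already fails at $n=2$: it gives $\frac{5}{6}\cdot 16-\frac{4}{3}=12$, whereas $b_2=2a_1+2b_1=6$. Your assertion that the linear system ``yields the stated closed form'' is therefore false, and your own safeguard --- verifying the base cases $n=1,2$ by direct substitution --- would have exposed this had you actually performed it; the induction you describe cannot get past $n=2$ for the printed formula.

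To be fair, the defect originates in the paper: the lemma as printed is erroneous, and the paper's own proof concludes with the correct formula $b_n=\frac{1}{3}\cdot 4^n+\frac{2}{3}$, which is also what the proof of Theorem~\ref{thm:upperbound} actually uses (there $u(n)\le\frac{1}{2}b_m=\frac{1}{6}\cdot 4^m+\frac{1}{3}$). Both expressions satisfy $b_{n+1}=5b_n-4b_{n-1}$ and agree at $n=1$; the printed one corresponds to initial data $a_1=4$, $b_1=2$ rather than $a_1=1$, $b_1=2$, which is presumably how the typo arose. So your approach is the right one and identical to the paper's, but a complete write-up must carry out the final linear solve and state the corrected closed form; the statement as literally given admits no proof.
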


\begin{proof} 
Notice that for $n\ge 2$, we have $2a_n=3b_n-4b_{n-1}$. Therefore, for $n\ge 3$, we have $b_n=5b_{n-1}-4b_{n-2}$, where $b_1=2$ and $b_2=6$. Standard techniques give that for some constants $B,C$, we have $b_n = B \cdot 4^n + C \cdot 1^n$. Using initial terms in the sequence, we find $b_n= \frac{1}{3}\cdot 4^n+\frac{2}{3}$.
\end{proof}

\begin{theorem}\label{thm:upperbound} 
We have $u(n)\le\frac{2}{3}n^2+\frac{1}{3}=O(n^2).$
\end{theorem}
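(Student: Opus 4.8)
The plan is to unfold the two recursions of Lemmas~\ref{lm:unhalf} and~\ref{lm:u1nhalf},
\[
u(n)\le u_1(\lfloor n/2\rfloor)+2u(\lfloor n/2\rfloor),\qquad u_1(n)\le 3u_1(\lfloor n/2\rfloor)+2u(\lfloor n/2\rfloor),
\]
repeatedly, all the way down to the base case $n=1$, while recording the coefficients that accumulate in front of $u_1$ and $u$ by means of the sequences $a_k,b_k$ of Lemma~\ref{lm:halfsequence}.

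First I would establish, by induction on $k\ge 1$, that whenever $\lfloor n/2^{k-1}\rfloor\ge 2$ one has
\[
u(n)\le a_k\,u_1\!\left(\lfloor n/2^k\rfloor\right)+b_k\,u\!\left(\lfloor n/2^k\rfloor\right).
\]
The case $k=1$ is precisely Lemma~\ref{lm:unhalf}. For the inductive step, writing $m=\lfloor n/2^k\rfloor\ge 2$, I substitute the two recursions for $u_1(m)$ and $u(m)$ into the inductive bound and collect coefficients: the coefficient of $u_1(\lfloor m/2\rfloor)$ becomes $3a_k+b_k=a_{k+1}$ and that of $u(\lfloor m/2\rfloor)$ becomes $2a_k+2b_k=b_{k+1}$, which is exactly the recursion defining the two sequences, while the nested floors collapse via $\lfloor\lfloor n/2^k\rfloor/2\rfloor=\lfloor n/2^{k+1}\rfloor$.

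To finish, for $n\ge 2$ I set $k=\lfloor\log_2 n\rfloor$, so that $\lfloor n/2^k\rfloor=1$ and $\lfloor n/2^{k-1}\rfloor\ge 2$, making the displayed inequality applicable. Since $u(1)=1$ and $u_1(1)=1$ (the only r-$2$-r tree with a single white leaf is the cherry with one red and one white leaf, which contains itself), this yields $u(n)\le a_k+b_k$. By Lemma~\ref{lm:halfsequence} we have $b_k=\tfrac13 4^k+\tfrac23$; since $a_k$ obeys the same linear recurrence $x_k=5x_{k-1}-4x_{k-2}$ (eigenvalues $1$ and $4$), solving it with $a_1=1,\ a_2=5$ gives $a_k=\tfrac13 4^k-\tfrac13$, and hence $a_k+b_k=\tfrac23 4^k+\tfrac13$. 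Because $2^k\le n$ forces $4^k=(2^k)^2\le n^2$, we conclude $u(n)\le\tfrac23 n^2+\tfrac13$; the case $n=1$ holds with equality.

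The step requiring the most care is the floor bookkeeping: one must verify that each intermediate argument $\lfloor n/2^j\rfloor$ stays at least $2$ until the very last halving (so that the recursions of Lemmas~\ref{lm:unhalf} and~\ref{lm:u1nhalf} are legitimately applicable at every level), and that the chosen $k=\lfloor\log_2 n\rfloor$ lands the final argument exactly on $1$. A secondary point is that Lemma~\ref{lm:halfsequence} supplies a closed form only for $b_k$, so I must separately solve for $a_k$ (or, equivalently, track the sum $a_k+b_k$ directly) in order to pin down the leading constant $\tfrac23$.
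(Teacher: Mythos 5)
Your proof is correct and follows essentially the same route as the paper: the same induction on $k$ unfolding the two recursions with the coefficient sequences $a_k,b_k$, the same termination at $k=\lfloor\log_2 n\rfloor$ with base case $u(1)=u_1(1)=1$, and the same final estimate $4^k\le n^2$. The only cosmetic difference is that you solve the linear recurrence for $a_k$ explicitly, while the paper instead uses $a_{k}+b_{k}=\tfrac12 b_{k+1}$; note also that the closed form $b_k=\tfrac13 4^k+\tfrac23$ you cite is the one actually derived in the proof of Lemma~\ref{lm:halfsequence} (the formula printed in that lemma's statement is a typo), so your usage is the correct one.
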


\begin{proof}
Observe that $u(n)$ and $u_1(n)$ are monotone increasing, and for $0<\alpha<1$ we have $\lfloor\alpha\lfloor\alpha n\rfloor\rfloor\le  \lfloor\alpha^2 n\rfloor$. 
Combining these facts with the recursive upper bounds above
we will show by induction on $k$ that, as long as $n\ge 2^k$, we have
\begin{equation}
u(n)\le a_k u_1\left(\left\lfloor2^{-k} n\right\rfloor\right)+b_ku\left(\left\lfloor2^{-k} n\right\rfloor\right), \label{eq:toprove}
\end{equation}
where sequences $a_i$ and $b_i$ are as defined in Lemma~\ref{lm:halfsequence}.

For $n\ge 2$, $u(n)  \leq  u_1(\lfloor n/2 \rfloor)+2u(\lfloor n/2 \rfloor) =a_1u_1(\lfloor 2^{-1}n \rfloor)+b_1u(\lfloor 2^{-1}n \rfloor) $, so the statement is true for $k=1$.

Let $k\ge 1$ and assume Equation~\eqref{eq:toprove} holds for $k$. If $n\ge 2^{k+1}$, i.e.,
$2^{-k}n\ge 2$, then the induction hypothesis and the recursive formulas above yield 
\begin{eqnarray*}
u(n)&\le& a_k u_1\left(\left\lfloor2^{-k} n\right\rfloor\right)+b_ku\left(\left\lfloor2^{-k} n\right\rfloor\right)\\
&\le& 
a_k\left(3u_1\left(\left\lfloor\lfloor 2^{-k}n \rfloor/2\right\rfloor\right) 
+ 2u\left(\left\lfloor \lfloor 2^{-k}n \rfloor/2 \right\rfloor\right)\right)
+b_k\left(u_1\left(\left\lfloor\left\lfloor2^{-k} n\right\rfloor/2\right\rfloor\right)
+2u\left(\left\lfloor \left\lfloor2^{-k} n\right\rfloor/2\right\rfloor\right)
\right)\\
&\le& (3a_k+b_k)u_1\left(\lfloor 2^{-(k+1)}n \rfloor\right)+
(2a_k+2b_k)u\left(\lfloor 2^{-(k+1)}n \rfloor\right)\\
&=&a_{k+1}u_1\left(\lfloor 2^{-(k+1)}n \rfloor\right)+
b_{k+1}u\left(\lfloor 2^{-(k+1)}n \rfloor\right),
\end{eqnarray*}
which finishes the proof of Inequality~\eqref{eq:toprove}.

Using the facts that $u_1(1)=u(1)=1$, this gives that if $m$ is the smallest integer such that $2^{-m} n<2 $, then 
$u(n)\le a_{m-1}+b_{m-1}=\frac{1}{2}b_{m}=\frac{1}{6} \cdot 4^{m}+\frac{1}{3}$.  
Since $m$ is minimal such that $n<2^{m+1}$, $m=\lfloor \log_2(n)\rfloor+1\le\log_2(n)+1$. Therefore, $4^m\le 4n^2$, which gives $u(n)\le \frac{2}{3}n^2+\frac{1}{3}$.
\end{proof}

\subsection{A superlinear lower bound}

In this section, we will establish a superlinear lower bound on the growth rate of $u(n)$.
\begin{theorem}\label{thm:lowerbound} The following holds:
\[u(n)=\Omega(n\log n).\]
\end{theorem}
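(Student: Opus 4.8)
The plan is to exhibit a family of $\Theta(\log n)$ pairwise ``incompatible'' hard trees, all with $n$ leaves, that every $n$-universal tree must contain simultaneously, and then to argue that their embeddings cannot overlap too much, forcing the host tree to have many leaves. Since $u$ is monotone (an $(n{+}1)$-universal tree is $n$-universal, because every $n$-leaf tree is an induced binary subtree of some $(n{+}1)$-leaf tree), it suffices to prove the bound when $n=2^k$; the general case then follows from $u(n)\ge u\bigl(2^{\lfloor\log_2 n\rfloor}\bigr)$ together with $2^{\lfloor\log_2 n\rfloor}>n/2$ and $\lfloor\log_2 n\rfloor\ge\log_2 n-1$.

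For the hard family I take, for $h=0,1,\dots,k-1$, the jellyfish $G_h=J_{h,2^{k-h}}$. Each $G_h$ has $2^h\cdot 2^{k-h}=n$ leaves, so any $T\in\mathcal{U}(n)$ contains every $G_h$ as an induced binary subtree; I fix one embedding of each and let $A_h$ be the set of $n$ leaves of $T$ that it uses. The heights $h+2^{k-h}-1$ are strictly decreasing in $h$ on $0\le h\le k-1$, so for $h_1<h_2$ the smaller height is that of $G_{h_2}$, and Lemma~\ref{lm:jelly} gives $\mast(G_{h_1},G_{h_2})=2^{h_1}(h_2-h_1)+2^{\,k+h_1-h_2}$. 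The key structural observation is that induced-subtree embeddings are transitive on leaf subsets: for $S=A_{h_1}\cap A_{h_2}$, the binary subtree of $T$ induced by $S$ is isomorphic to an induced subtree of $G_{h_1}$ and of $G_{h_2}$, hence is a common agreement subtree, so $|A_{h_1}\cap A_{h_2}|\le\mast(G_{h_1},G_{h_2})$.

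The crucial move is then a second-moment (Cauchy--Schwarz) argument rather than a union bound. Writing $N=\whiteleaf{T}$ and defining the coverage $f(\ell)=|\{h:\ell\in A_h\}|$ over the leaves $\ell$ of $T$, I have $\sum_\ell f(\ell)=\sum_h|A_h|=kn$ and $\sum_\ell f(\ell)^2=\sum_{h_1,h_2}|A_{h_1}\cap A_{h_2}|$. The routine evaluations $\sum_{h_1<h_2}2^{\,k+h_1-h_2}=kn-2n+O(1)$ and $\sum_{h_1<h_2}2^{h_1}(h_2-h_1)=O(n)$ show $\sum_{h_1<h_2}\mast(G_{h_1},G_{h_2})=kn\,(1+o(1))$, whence $\sum_\ell f(\ell)^2\le kn+2\sum_{h_1<h_2}\mast(G_{h_1},G_{h_2})=3kn\,(1+o(1))$. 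Applying Cauchy--Schwarz to the support of $f$, which has size at most $N$, yields $N\ge(\sum_\ell f)^2/\sum_\ell f^2\ge (kn)^2/\bigl(3kn(1+o(1))\bigr)=\tfrac13\,n\log_2 n\,(1-o(1))$, giving $u(n)=\Omega(n\log n)$.

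The main obstacle, and the reason a naive approach fails, is exactly that the pairwise overlaps are large: the Bonferroni bound $N\ge\sum_h|A_h|-\sum_{h_1<h_2}|A_{h_1}\cap A_{h_2}|$ is worthless because $\sum_{h_1<h_2}\mast=kn(1+o(1))$ cancels the main term $\sum_h|A_h|=kn$ and leaves only $\Theta(n)$. The fix is to notice that this same controlled total overlap is precisely $\sum_\ell f(\ell)^2$, so that concentrating it with Cauchy--Schwarz recovers the genuine $\Omega(kn)$ lower bound on the number of covered leaves. The two remaining routine verifications for the full write-up are the strict monotonicity of the jellyfish heights (needed to identify $m$ as the smaller height when invoking Lemma~\ref{lm:jelly}) and the two elementary geometric-type sums quoted above.
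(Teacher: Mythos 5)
Your proof is correct, and it is structurally the paper's own proof: you use the same hard family of $n$-leaf jellyfishes $J_{h,2^{k-h}}$ ($h=0,\dots,k-1$; the paper indexes them as $T_i=J_{i-1,2^{k-i+1}}$), the same reduction to $n=2^k$ via monotonicity of $u$, the same appeal to Lemma~\ref{lm:jelly} (together with the nesting/transitivity of induced subtrees) to bound $|A_{h_1}\cap A_{h_2}|$ by $\mast$, and the same evaluation showing $\sum_{h_1<h_2}\mast = kn\,(1+o(1))$. The only genuine divergence is the finishing inequality. Where you apply Cauchy--Schwarz to the coverage function $f$, the paper invokes Kai Lai Chung's inclusion--exclusion inequality with $\ell=2$, namely $3\left\vert\bigcup_i A_i\right\vert\ge 2\sum_i|A_i|-\sum_{i<j}|A_i\cap A_j|$, which is precisely its answer to the obstacle you correctly identify (first-order Bonferroni cancels the main term and gives nothing). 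The two finishes are interchangeable here and yield the identical constant: Chung gives $u(2^k)\ge \tfrac{2kn}{3}-\tfrac{kn}{3}=\tfrac{kn}{3}$ exactly, while your second-moment bound gives $\tfrac{kn}{3}\bigl(1-o(1)\bigr)$; both exploit the same data, the first moment $\sum_\ell f(\ell)=kn$ and the controlled second moment coming from the $\mast$ bounds. Your version has the minor advantage of being self-contained (Cauchy--Schwarz instead of a cited inequality), at the cost of carrying harmless $o(1)$ error terms; Chung's inequality, conversely, comes in a family indexed by $\ell$ that could in principle be tuned, though the paper does not exploit this.
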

\begin{proof}
As $u(n)$ is increasing, it is enough to prove that $u(2^k)=\Omega(k2^k)$. 
First, we recall Kai Lai  Chung's inclusion-exclusion formula  \cite{kailai,galambos}  for the union of finite sets. For any positive integer $\ell$ and finite sets $A_1,\ldots,A_m$, we have
\begin{equation} \label{eq:oskai} 
\binom{\ell+1}{2} \left\vert \bigcup_{i=1}^m A_i\right\vert\geq  \ell \sum_{i=1}^m \left\vert A_i\right\vert -\sum_{1\leq i<j\leq m} \left\vert A_i\cap A_j\right\vert.  
\end{equation}

Let $k\ge 2$ and $n=2^k$.
For $i:1\le i\le k$, let $T_i=J_{i-1, 2^{k-i+1}}$. Then $\whiteleaf{T_i}= 2^{i-1} \cdot 2^{k-i+1} = 2^k = n$.
Fix an $n$-universal tree $U^{\star}$. Let the universe be the set of leaves of $U^{\star}$, and for each $i:1\le i\le k$, let $A_i$ be an $n$-element   
subset of the universe that induces $T_i$ in the universal tree $U^{\star}$. 
By Lemma~\ref{lm:jelly}, for $i<j$,  mast$(T_i,T_j)=2^{i-1}(2^{k-j+1}+j-i)=2^{k-(j-i)}+2^{i-1}(j-i)$.
Kai Lai Chung's formula \eqref{eq:oskai} with $\ell=2$ and $m=k$  gives that 
\begin{equation} \label{eq:ourkai}
u\left(2^k\right) \geq \frac{2k2^k}{3} -\frac{1}{3}\sum_{1\leq i<j\leq k} \mast (T_i,T_j).
\end{equation} 

We also see that 
\begin{eqnarray*} 
\sum_{1\leq i<j\leq k} \mast (T_i,T_j)&=& 
\sum_{i=1}^{k-1}\sum_{j=i+1}^{k} \left(2^{k-(j-i)}+2^{i-1}(j-i)\right)
=\sum_{i=1}^{k-1}\sum_{j=1}^{k-i}\left(2^{k-j}+2^{i-1}j\right)\\
&=&\sum_{j=1}^{k-1}\sum_{i=1}^{k-j}\left(2^{k-j}+2^{i-1}j\right)\leq \sum_{j=1}^{k-1}\left((k-j)2^{k-j}+j2^{k-j}\right)\\
&=&\sum_{j=1}^{k-1}k2^{k-j}=k\sum_{j=1}^{k-1}2^{k-j}<k 2^k
\end{eqnarray*} 

Using this in Inequality~\eqref{eq:ourkai}, we obtain
\begin{equation}
u\left(2^k\right) \geq \frac{2k2^k}{3} - \frac{k2^k}{3} =\frac{k2^k}{3}=\Omega(k2^k). \qedhere 
\end{equation} 
\end{proof}

\subsection{r-\texorpdfstring{$2$}{2}-r trees}
In this subsection, we will connect the (unknown) growth rates of $u(n)$ and $u_1(n)$. 

\begin{definition}\label{def:svec} For each $k\in\mathbb{N}$ we define a sequence $\vec{s}_k=(s_{1,k},s_{2,k},\ldots,s_{2^{k+1}-1,k})$ as follows:  $\vec{s}_0=(1)$, and for
each $k>1$, $\vec{s}_k$ is the concatenation of $\vec{s}_{k-1}, 2^k$, and
$\vec{s}_{k-1}$, i.e., $s_{2^k,k}=2^k$ and $(s_{1,k},\ldots,s_{2^{k}-1,k})=(s_{2^k+1,k},\ldots,s_{2^{k+1}-1,k})=\vec{s}_{k-1}$. It is easy to see that for each $k\in\mathbb{N}$ the sequence $\vec{s}_k$ contains the term $2^i$ exactly $2^{k-i}$ times for each $i:0\le i\le k$.
\end{definition}

\begin{lemma}\label{lm:ssub} Let $n=2^k$ and assume $b_1+b_2+\cdots +b_{\ell}$ is a composition of $n$ into positive terms. Then, there are integers $1\le a_1<a_2<\cdots a_{\ell}\le 2n-1$
such that $b_i \leq s_{a_i,k}$ for each $i:1\le i\le\ell$. 
\end{lemma}

\begin{proof} We prove this by induction on $k$. The base case $k=0$ is obvious.
Let $k\ge 1$ and assume that the statement is true for $k-1$. Let $d$ be the smallest integer such that $\sum_{i=1}^d b_i>\frac{n}{2}=2^{k-1}$, and set $a_d=2^k=n$.
As $\sum_{i=1}^{\ell} b_i=n=2^k>2^{k-1}$, $d$ is well defined.
Then, we have $\sum_{i=1}^{d-1} b_i\le 2^{k-1}$ and $\sum_{i=d+1}^{\ell}b_i<2^{k-1}$. The sequences
$(b_1,\ldots,b_{d-1})$ and $(b_{d+1},\ldots,b_{\ell})$ embed into $\vec{s}_{k-1}$ by the induction hypothesis (as we can extend them by one more positive term if needed
if their sum is smaller than $2^{k-1}$) and $b_d \leq 2^k$, and together this gives the required sequence $a_i$.
\end{proof}

\begin{lemma}\label{lm:compare} For every $k\in\mathbb{N}$, $u_1(2^k)\le\sum_{i=0}^k 2^i u(2^{k-i})$.
\end{lemma}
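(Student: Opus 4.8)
The plan is to prove the bound $u_1(2^k) \le \sum_{i=0}^k 2^i u(2^{k-i})$ by explicitly constructing an $n$-universal r-$2$-r tree (with $n = 2^k$ white leaves) whose number of white leaves is at most the right-hand side, and then invoking the definition of $u_1$. The natural candidate for the construction is a \emph{caterpillar-like spine} attached to a sequence of universal subtrees, indexed by the sequence $\vec{s}_k$ of Definition~\ref{def:svec}. The key intuition is that, for any target r-$2$-r tree $F$ with $n$ white leaves and a distinguished red leaf $\ell_F$, the path in $F$ from the root to $\ell_F$ decomposes $F$ into a sequence of "side subtrees" hanging off that path, together with the red leaf at its end. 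The sizes of these side subtrees form a composition of $n$ (the number of white leaves), and Lemma~\ref{lm:ssub} tells us this composition embeds into $\vec{s}_k$.

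First I would fix the construction. Build a tree $U$ consisting of a single root-to-red-leaf spine (a caterpillar-like backbone) with $2^{k+1}-1$ attachment points along it, matching the length of $\vec{s}_k$; at the $a$-th attachment point, graft a copy of a $u(s_{a,k})$-universal tree $U^{(a)} \in \mathcal{U}(s_{a,k})$ via the $\oplus$ operation, so that each $U^{(a)}$ hangs off the spine as one child while the spine continues as the other. The red leaf $\ell_U$ sits at the bottom of the spine. Since $\vec{s}_k$ contains the term $2^i$ exactly $2^{k-i}$ times (stated in Definition~\ref{def:svec}), the total white-leaf count of $U$ is $\sum_{a} u(s_{a,k}) = \sum_{i=0}^{k} 2^{k-i}\, u(2^i) = \sum_{i=0}^{k} 2^i\, u(2^{k-i})$, which is exactly the target bound (after reindexing $i \mapsto k-i$). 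So the size bookkeeping will follow directly from the structure of $\vec{s}_k$, modulo being careful that $u$ is monotone so that $u(b) \le u(s_{a,k})$ whenever $b \le s_{a,k}$.

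Next I would verify universality. Take an arbitrary r-$2$-r tree $F$ with $n = 2^k$ white leaves. Consider the root-to-$\ell_F$ path in $F$; each internal vertex on this path has one off-path child subtree (a rooted binary tree on white leaves), and these subtrees, read along the path, give a composition $b_1 + b_2 + \cdots + b_\ell$ of $n$ into positive parts. By Lemma~\ref{lm:ssub}, there exist indices $1 \le a_1 < a_2 < \cdots < a_\ell \le 2n-1$ with $b_i \le s_{a_i,k}$ for each $i$. Since $U^{(a_i)}$ is $s_{a_i,k}$-universal and $b_i \le s_{a_i,k}$, each off-path subtree of $F$ embeds as an induced binary subtree into the corresponding grafted $U^{(a_i)}$. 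Because the chosen indices $a_1 < \cdots < a_\ell$ respect the order along the spine, these embeddings assemble along the spine — in the same top-to-bottom order as they occur in $F$ — into an induced binary subtree of $U$, with $\ell_F$ mapping to $\ell_U$. This shows $F$ is an induced r-$2$-r subtree of $U$, so $U \in \mathcal{U}_1(n)$ and hence $u_1(n) \le \whiteleaf{U}$, giving the claim.

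The main obstacle I anticipate is making the spine-embedding argument fully rigorous, specifically reconciling two things: the off-path subtrees in $F$ may attach on either the left or the right of the spine, whereas in $U$ each grafted universal tree occupies a fixed side. Since branches are unordered (as stated in the introduction), side-swaps are free, so this should not be a genuine difficulty, but it must be stated carefully so that the induced-subtree embedding — which must send leaves of the image to leaves of $U$ and suppress degree-two vertices correctly — is honestly an induced binary subtree in the sense of the paper's definition. A secondary technical point is handling the "gaps": the indices $a_1 < \cdots < a_\ell$ may skip attachment points of the spine, and I must confirm that an embedding can simply route the spine of $F$ through the suppressed (unused) attachment vertices of $U$ without disturbing the induced structure. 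I would handle both points by phrasing the embedding as a subdivision $F'$ of $F$ sitting inside $U$, exactly as in the paper's definition of induced binary subtree, rather than trying to describe a vertex map directly.
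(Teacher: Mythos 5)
Your proposal is correct and follows essentially the same route as the paper's proof: the identical spine construction $R_1\oplus(R_2\oplus(\cdots(R_{2^{k+1}-1}\oplus Q)\cdots))$ with universal trees grafted along a root-to-red-leaf path indexed by $\vec{s}_k$, the identical decomposition of the target tree $F$ into the side subtrees hanging off its $r_F$-$\ell_F$ path, and the same invocation of Lemma~\ref{lm:ssub} to match the resulting composition against $\vec{s}_k$. The technical points you flag (side-swaps and routing through unused attachment points) are handled exactly as you suggest, via the subdivision-based definition of induced binary subtree, and the paper leaves them implicit as well.
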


\begin{proof} 
Fix a $k\in\mathbb{N}$, and for each $i:1\le i\le 2^{k+1}-1$, let  $R_i$ be an $s_{i,k}$-universal tree of size $u(s_{i,k})$, and let $Q$ be the redleaf tree with no white leaves. Set
$T=R_{1}\oplus(R_{2}\oplus(R_{3}\oplus(\cdots(R_{2^{k+1}-1}\oplus Q)\cdots)))$. Then $T$ is a redleaf tree with
$\whiteleaf{T}=\sum_{i=0}^k 2^i u(2^{k-i})$. We will show that $T\in\mathcal{U}_1(2^k)$, which finishes the proof. 

Let $F$ be a redleaf tree and let $P=x_1x_2\ldots x_{\ell}x_{\ell+1}$ be the unique $r_F$-$\ell_F$ path in $F$ (so $x_1=r_F$ and $x_{\ell+1}=\ell_F$). Clearly, $1\le\ell\le n$. For each
$i:1\le i\le \ell$ let $y_i$ be the child of $x_i$ that is not $x_{i+1}$ and let $F_i$ be the subtree of $F$ rooted at $y_i$. Then 
$F=F_{1}\oplus(F_{2}\oplus(F_{3}\oplus(\cdots(F_{\ell}\oplus Q)\cdots)))$, and the statement follows from setting 
$b_i=\whiteleaf{F_i}$ and Lemma~\ref{lm:ssub}. 
\end{proof}

Using the above results, we now relate bounds between $u(n)$ and $u_1(n)$. In particular, we will see that a polynomial upper bound directly translates between the two. While a non-polynomial upper bound does not translate from $u(n)$ to $u_1(n)$, one can still obtain some information on $u_1(n)$ based on $u(n)$.

\begin{theorem}\label{thm:growth}
For any $\alpha>1$, we have $u(n)=O(n^{\alpha})$ if and only if $u_1(n)=O(n^{\alpha})$.
\end{theorem}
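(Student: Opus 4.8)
The plan is to prove the two implications separately; in both directions the essential mechanism is that $\alpha>1$ forces the factor $2^{1-\alpha}$ to lie strictly below $1$, so the relevant geometric series converge to constants.

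\textbf{The ``only if'' direction} ($u(n)=O(n^\alpha)\Rightarrow u_1(n)=O(n^\alpha)$) is where I would invoke Lemma~\ref{lm:compare}. Assume $u(m)\le C m^\alpha$ for all $m$. Substituting into $u_1(2^k)\le\sum_{i=0}^k 2^i u(2^{k-i})$ gives
\[
u_1(2^k)\le C\sum_{i=0}^k 2^i\,2^{(k-i)\alpha}=C\,2^{k\alpha}\sum_{i=0}^k 2^{i(1-\alpha)}\le C\,2^{k\alpha}\cdot\frac{1}{1-2^{1-\alpha}},
\]
where the last bound uses $\alpha>1$ to sum the geometric series. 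Thus $u_1(2^k)\le C'(2^k)^\alpha$ for a constant $C'$. To pass from powers of two to all $n$, I would use that $u_1$ is monotone increasing: for $2^{k-1}<n\le 2^k$ one has $u_1(n)\le u_1(2^k)\le C'(2^k)^\alpha<C'(2n)^\alpha=O(n^\alpha)$.

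\textbf{The ``if'' direction} ($u_1(n)=O(n^\alpha)\Rightarrow u(n)=O(n^\alpha)$) is more elementary and relies on the $\oplus$/substitution recursion of Lemma~\ref{lm:unhalf}, namely $u(n)\le u_1(\lfloor n/2\rfloor)+2u(\lfloor n/2\rfloor)$. Assuming $u_1(m)\le C_1 m^\alpha$, this becomes a recursion of the form $u(n)\le 2u(\lfloor n/2\rfloor)+C_1(n/2)^\alpha$. I would solve it by induction with the ansatz $u(n)\le D n^\alpha$: the inductive step requires $D n^\alpha\ge C_1 2^{-\alpha}n^\alpha+2^{1-\alpha}D n^\alpha$, i.e.\ $D(1-2^{1-\alpha})\ge C_1 2^{-\alpha}$, which is solvable for $D$ precisely because $\alpha>1$ makes $1-2^{1-\alpha}>0$; one then takes $D$ large enough to also absorb the finitely many base cases. (Equivalently, this is case~3 of the master theorem, since $\alpha>1=\log_2 2$.)

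I do not expect a deep obstacle here, since Lemmas~\ref{lm:unhalf} and~\ref{lm:compare} already carry the combinatorial weight. The two points requiring care are the convergence of the geometric sums, which hinges entirely on the strict inequality $\alpha>1$ (at $\alpha=1$ both sums become logarithmic, reflecting the genuine $\log$-factor gap that can separate $u$ and $u_1$), and the routine extension from $n=2^k$, where Lemma~\ref{lm:compare} is stated, to arbitrary $n$ via monotonicity and doubling. Assembling these bounds in both directions yields the claimed equivalence.
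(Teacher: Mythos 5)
Your proof is correct, and one half of it coincides with the paper's argument while the other half takes a genuinely different (and heavier) route. Your ``only if'' direction is exactly what the paper does: plug the bound $u(m)\le Cm^{\alpha}$ into Lemma~\ref{lm:compare}, sum the geometric series $\sum_i 2^{i(1-\alpha)}$ (convergent precisely because $\alpha>1$), and pass from powers of two to all $n$ by monotonicity and doubling. For the ``if'' direction, however, the paper does not run any recursion at all: it simply invokes the pointwise inequality $u(n)\le u_1(n)$, which holds because an optimal $n$-universal r-$2$-r tree, with its red leaf deleted and the resulting degree-$2$ vertex suppressed, is an $n$-universal tree in the plain sense (any $n$-leaf tree $F$ embeds, e.g.\ by embedding the r-$2$-r tree obtained from $F$ by joining a red leaf at the root and then restricting the embedding to the white leaves). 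That observation makes this direction a one-liner, needs no hypothesis on $\alpha$, and in fact yields the stronger statement that $u_1(n)=O(f(n))$ implies $u(n)=O(f(n))$ for an \emph{arbitrary} function $f$. Your alternative via Lemma~\ref{lm:unhalf} and the master-theorem-style induction with ansatz $u(n)\le Dn^{\alpha}$ is valid --- the step $D(1-2^{1-\alpha})\ge C_1 2^{-\alpha}$ is solvable exactly when $\alpha>1$ --- but it is more machinery than needed and, unlike the paper's argument, is tied to the polynomial form of the bound. So the equivalence stands either way; the paper's route is simpler and more general on the easy direction, while yours has the minor virtue of not requiring one to notice the inequality $u(n)\le u_1(n)$.
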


\begin{proof} As both $u$ and $u_1$ are increasing, it is enough to prove the statements for leaf numbers that are powers of two, i.e., for the subsequence $n_i=2^i$. 
The latter is immediate from Lemma~\ref{lm:compare} and the obvious fact that $u(n)\le u_1(n)$.
\end{proof}

\begin{corollary}
   The following hold:
\begin{enumerate}[label={\upshape (\roman*)}]
       \item $u_1(n)=O(n^2)$, and
       \item $u_1(n)=\Omega(n\log n)$. 
   \end{enumerate}
\end{corollary}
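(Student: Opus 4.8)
The corollary follows almost immediately from the theorems already established, so the plan is to assemble the two pieces rather than prove anything substantively new. Both parts are statements about $u_1(n)$, and the machinery connecting $u_1$ to $u$ has been built in Theorem~\ref{thm:growth} and Lemma~\ref{lm:compare}.

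For part (i), the plan is to invoke Theorem~\ref{thm:upperbound}, which gives $u(n)=O(n^2)$, and then apply Theorem~\ref{thm:growth} with $\alpha=2$. The ``only if'' direction of that equivalence states precisely that $u(n)=O(n^2)$ implies $u_1(n)=O(n^2)$, so part (i) is immediate. One could alternatively argue directly from Lemma~\ref{lm:compare}: for $n=2^k$ we have $u_1(2^k)\le\sum_{i=0}^k 2^i u(2^{k-i})\le\sum_{i=0}^k 2^i\bigl(\tfrac{2}{3}(2^{k-i})^2+\tfrac{1}{3}\bigr)$, and since the dominant term is $\tfrac{2}{3}\sum_i 2^i\cdot 4^{k-i}=\tfrac{2}{3}\cdot 4^k\sum_i 2^{-i}=O(4^k)=O(n^2)$, monotonicity of $u_1$ extends the bound from powers of two to all $n$. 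Either route works; I would cite Theorem~\ref{thm:growth} for brevity.

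For part (ii), the plan is to use the trivial inequality $u(n)\le u_1(n)$ together with the lower bound $u(n)=\Omega(n\log n)$ from Theorem~\ref{thm:lowerbound}. Since $u_1(n)\ge u(n)=\Omega(n\log n)$, the lower bound transfers directly to $u_1$. This direction requires no appeal to the more delicate Lemma~\ref{lm:compare}; it is just the observation that any $n$-universal r-$2$-r tree, after ignoring its red leaf, must still be large enough to accommodate the white-leaf structure, which is captured by $u(n)\le u_1(n)$.

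There is essentially no obstacle here: the corollary is a packaging of prior results. The only point requiring a moment's care is making sure the implication in Theorem~\ref{thm:growth} is applied in the correct direction for part (i)---it is the implication $u(n)=O(n^\alpha)\Rightarrow u_1(n)=O(n^\alpha)$ that we need, and this is exactly what the ``if and only if'' supplies. I would write the proof in two short sentences, one per part, citing Theorems~\ref{thm:upperbound}, \ref{thm:lowerbound}, and \ref{thm:growth}.
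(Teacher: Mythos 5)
Your proposal is correct and matches the paper's proof exactly: part (i) follows from Theorem~\ref{thm:upperbound} combined with Theorem~\ref{thm:growth}, and part (ii) from Theorem~\ref{thm:lowerbound} together with the trivial inequality $u(n)\leq u_1(n)$. Your alternative direct computation via Lemma~\ref{lm:compare} is also valid, but the route you chose to cite is precisely the paper's.
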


\begin{proof}
   The upper bound follows from Theorem \ref{thm:upperbound} and Theorem \ref{thm:growth}. The lower bound follows from  Theorem \ref{thm:lowerbound} and the fact $u(n)\leq u_1(n)$. 
\end{proof}

\section{Computational results} \label{sec:pics}

For $n\in\{1,2,3\}$, the rooted binary tree with $n$ leaves is unique, and $u(n)=n$. In the following pictures, $U_n$ will denote the $n$-universal tree of minimal size (if it is unique) and
$U_n^i$ will denote the $n$-universal trees of minimal size, where $i$ ranges between 1 and the total number of $n$-universal trees of minimal size.

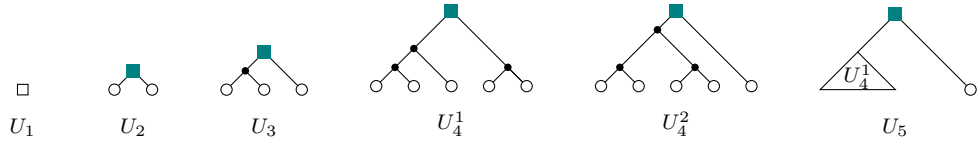
\begin{figure}[H]
\centering
\begin{tikzpicture}[scale=.5,font=\tiny]    
   \node[whiteroot]  at (0,0) {};      
\node at (0,-1) {$U_1$};  
\end{tikzpicture}\quad\quad
\begin{tikzpicture}[scale=.5,font=\tiny]    
\draw (0,0) -- (0.5,0.5) -- (1,0);
   \node[wleaf]  at (0,0) {};      
   \node[wleaf]  at (1,0) {};      
   \node[rootvertex]  at (0.5,0.5) {};
\node at (.5,-1) {$U_2$};  
\end{tikzpicture}\quad\quad
\begin{tikzpicture}[scale=.5,font=\tiny]    
\draw (0,0) -- (0.5,0.5) -- (1,0);
   \draw (0.5,0.5) -- (1,1) -- (2,0);
   \node[wleaf]  at (0,0) {};      
   \node[wleaf]  at (1,0) {};      
   \node[wleaf]  at (2,0) {}; 
   \node[invertex]  at (0.5,0.5) {};
   \node[rootvertex]  at (1,1) {};
\node at (1,-1) {$U_3$};  
\end{tikzpicture}\quad\quad
\begin{tikzpicture}[scale=.5,font=\tiny]    
\draw (0,0) -- (0.5,0.5) -- (1,0);
   \draw (0.5,0.5) -- (1,1) -- (2,0);
   \draw (3,0) -- (3.5,0.5) -- (4,0);
   \draw (1,1) -- (2,2) -- (4,0);
   \node[wleaf]  at (0,0) {};      
   \node[wleaf]  at (1,0) {};      
   \node[wleaf]  at (2,0) {}; 
   \node[wleaf]  at (3,0) {}; 
   \node[wleaf]  at (4,0) {}; 
   \node[invertex]  at (0.5,0.5) {};
   \node[invertex]  at (1,1) {};
   \node[rootvertex]  at (2,2) {};
   \node[invertex]  at (3.5,0.5) {};
\node at (2,-1) {$U_4^1$};  
\end{tikzpicture}\quad\quad
\begin{tikzpicture}[scale=.5,font=\tiny]    
\draw (0,0) -- (0.5,0.5) -- (1,0);
   \draw (2.5,0.5) -- (2,0);
   \draw (1.5,1.5) -- (3,0);
   \draw (0.5,0.5) -- (2,2) -- (4,0);
   \node[wleaf]  at (0,0) {};      
   \node[wleaf]  at (1,0) {};      
   \node[wleaf]  at (2,0) {}; 
   \node[wleaf]  at (3,0) {}; 
   \node[wleaf]  at (4,0) {}; 
   \node[invertex]  at (0.5,0.5) {};
   \node[rootvertex]  at (2,2) {};
   \node[invertex]  at (1.5,1.5) {};    
   \node[fill=black,circle,inner sep=1pt]  at (2.5,0.5) {}; 
\node at (2,-1) {$U_4^2$};  
\end{tikzpicture}\quad\quad
\begin{tikzpicture}[scale=.5,font=\tiny]    
   \draw (1,1) -- (2,2) -- (4,0);
   \draw (1,1)--(0,0)--(2,0)--cycle;
   \node at (1,.35) {$U_4^1$};
   \node[wleaf]  at (4,0) {}; 
   \node[rootvertex]  at (2,2) {}; 
\node at (2,-1) {$U_5$};  
\end{tikzpicture}
\caption{The minimum size $n$-universal trees for $n\in\{1,2,3,4,5\}$.} 
\end{figure}

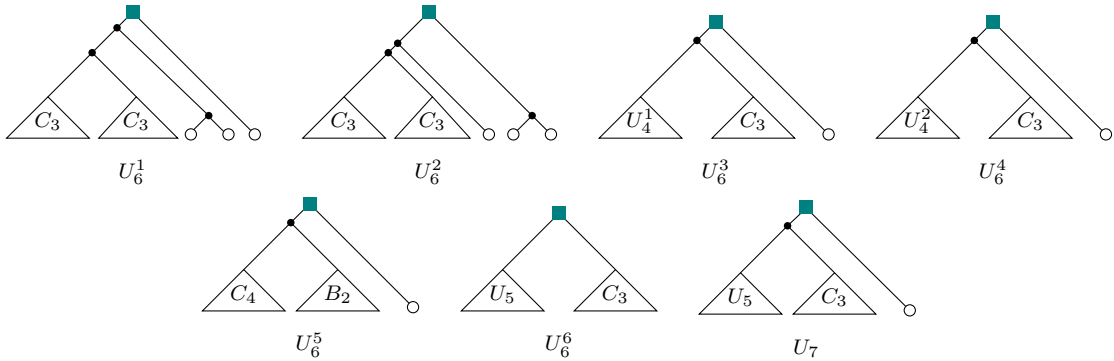
\begin{figure}[H]
\centering
\begin{tikzpicture}[scale=.5,font=\tiny] 
   \draw (-.1,-.1)--(2.1,-.1)--(1,1)--cycle;
   \node at (1,0.35) {$C_3$};  
   \draw (2.35,-.1)--(4.45,-.1)--(3.35,1)--cycle;
   \node at (3.35,0.35) {$C_3$};  
   \draw (1,1)--(3.25,3.25);
   \draw(2.175,2.175)--(3.35,1);
   \draw(2.8375,2.8375)--(5.275,0.5);
   \draw(4.75,0)--(5.25,0.5)--(5.75,0);
   \draw(3.25,3.25)--(6.5,0);
   \node[wleaf]  at (4.8,0) {};   
   \node[wleaf]  at (5.8,0) {}; 
   \node[wleaf]  at (6.5,0) {}; 
   \node[invertex]  at (2.175,2.175) {};
   \node[invertex]  at (2.8375,2.8375) {};
   \node[rootvertex]  at (3.25,3.25) {};
   \node[invertex]  at (5.275,0.5) {};
\node at (3.25,-1) {$U_6^1$};  
\end{tikzpicture}
\quad
\begin{tikzpicture}[scale=.5,font=\tiny]   
\draw (-.1,-.1)--(2.1,-.1)--(1,1)--cycle;
   \node at (1,0.35) {$C_3$};  
   \draw (2.35,-.1)--(4.35,-.1)--(3.35,1)--cycle;
   \node at (3.35,0.35) {$C_3$};  
   \draw (1,1)--(3.25,3.25);
   \draw(2.175,2.175)--(3.35,1);
   \draw(2.425,2.425)--(4.85,0);
   \draw(5.5,0)--(6,0.5)--(6.5,0);
   \draw(3.25,3.25)--(6,0.5);
   \node[wleaf]  at (4.85,0) {};   
   \node[wleaf]  at (5.5,0) {}; 
   \node[wleaf]  at (6.5,0) {}; 
   \node[invertex]  at (2.175,2.175) {};
   \node[invertex]  at (2.425,2.425) {};
   \node[rootvertex]  at (3.25,3.25) {};
   \node[invertex]  at (6,0.5) {};
   \node at (3.25,-1) {$U_6^2$};  
   \end{tikzpicture}
\quad
\begin{tikzpicture}[scale=.5, font=\tiny]  
\draw (1,1) -- (2.5,2.5) -- (4,1);
 \draw (2.5,2.5) -- (3,3) -- (6,0);
 \draw (1,1)--(-0.1,-.1)--(2.1,-.1)--cycle;
 \node at (1,0.35) {$U_4^1$};  
 \draw(2.9,-.1)--(4,1)--(5.1,-.1)--cycle;
 \node at (4,0.35) {$C_3$};  
 \node[wleaf]  at (6,0) {};
 \node[invertex]  at (2.5,2.5) {};
 \node[rootvertex]  at (3,3) {};
\node at (3,-1) {$U_6^3$};  \end{tikzpicture}
\quad
\begin{tikzpicture}[scale=.5,font=\tiny] 
 \draw (1,1) -- (2.5,2.5) -- (4,1);
 \draw (2.5,2.5) -- (3,3) -- (6,0);
 \draw (1,1)--(-0.1,-.1)--(2.1,-.1)--cycle;
 \node at (1,0.35) {$U_4^2$};  
 \draw(2.9,-.1)--(4,1)--(5.1,-.1)--cycle;
 \node at (4,0.35) {$C_3$};  
 \node[wleaf]  at (6,0) {};
 \node[invertex]  at (2.5,2.5) {};
 \node[rootvertex]  at (3,3) {};
\node at (3,-1) {$U_6^4$};  \end{tikzpicture}
\quad
\begin{tikzpicture}[scale=.5,font=\tiny] 
\draw (1,1) -- (2.25,2.25) -- (3.5,1);
 \draw (2.25,2.25) -- (2.75,2.75) -- (5.5,0);
 \draw (1,1)--(-0.1,-.1)--(2.1,-.1)--cycle;
 \node at (1,0.35) {$C_4$};  
 \draw(2.4,-.1)--(3.5,1)--(4.6,-.1)--cycle;
 \node at (3.5,0.35) {$B_2$};  
 \node[wleaf]  at (5.5,0) {};
 \node[invertex]  at (2.25,2.25) {};
 \node[rootvertex]  at (2.75,2.75) {};
\node at (2.75,-1) {$U_6^5$}; 
\end{tikzpicture}
\quad
\begin{tikzpicture}[scale=.5,font=\tiny] 
\draw (1,1) -- (2.5,2.5) -- (4,1);
 \draw (1,1)--(-0.1,-.1)--(2.1,-.1)--cycle;
 \node at (1,0.35) {$U_5$};  
 \draw(2.9,-.1)--(4,1)--(5.1,-.1)--cycle;
 \node at (4,0.35) {$C_3$};  
 \node[rootvertex]  at (2.5,2.5) {};
\node at (2.5,-1) {$U_6^6$};  
\end{tikzpicture}
\quad
\begin{tikzpicture}[scale=.5,font=\tiny] 
 \draw (1,1) -- (2.25,2.25) -- (3.5,1);
 \draw (2.25,2.25) -- (2.75,2.72) -- (5.5,0);
 \draw (1,1)--(-0.1,-.1)--(2.1,-.1)--cycle;
 \node at (1,0.35) {$U_5$};  
 \draw(2.4,-.1)--(3.5,1)--(4.6,-.1)--cycle;
 \node at (3.5,0.35) {$C_3$};  
 \node[wleaf]  at (5.5,0) {};
 \node[invertex]  at (2.25,2.25) {};
 \node[rootvertex]  at (2.75,2.75) {};
\node at (2.75,-1) {$U_7$};  
\end{tikzpicture}
\caption{The minimum size $n$-universal trees for $n\in\{6,7\}$.} 
\end{figure}

\begin{figure}[H]
\centering
\begin{tikzpicture}[scale=.5,font=\tiny,baseline={(0,0)}] 
   \draw (1.25,1.25) -- (2.5,0);
   \draw (1,1) -- (2,2) ;
   \draw (2,2) -- (2.5,2.5) -- (4,1);%
   \draw (2.5,2.5) -- (3.25,3.25) -- (6.5,0);%
   \draw (3.25,3.25) -- (3.625,3.625) -- (7.25,0);%
   \draw (5.5,0) -- (6,0.5);%
   \draw (1,1)--(-.1,-.1)--(2.1,-.1)--cycle;
   \node  at (1,0.35) {$U_4^2$};  
   \node[wleaf]  at (2.5,0) {}; 
   \node[wleaf]  at (5.5,0) {};% 
   \node[wleaf]  at (6.5,0) {}; %
   \node[wleaf]  at (7.25,0) {}; %
   \draw (4,1)--(2.9,-.1)--(5.1,-.1)--cycle;    
   \node at (4,0.35) {$U_4^1$};   
   \node[invertex]  at (1.25,1.25) {};  
   \node[invertex]  at (2.5,2.5) {}; 
   \node[rootvertex]  at (3.625,3.625) {}; %
   \node[invertex]  at (3.25,3.25) {}; %
   \node[invertex]  at (6,0.5) {}; %
\node at (4.5,-1) {$U_8^{1}$};  
\end{tikzpicture}\quad
\begin{tikzpicture}[scale=.5,font=\tiny,baseline={(0,0)}] 
   \draw (1,1) -- (3.625,3.625);
   \draw (1.25,1.25) -- (2.5,0);
   \draw (2.5,2.5) -- (4,1);
   \draw (2.875,2.875) -- (5.75,0);
   \draw (3.625,3.625) -- (7.25,0);
   \draw (6.25,0) -- (6.75,0.5);
   \draw (1,1)--(-.1,-.1)--(2.1,-.1)--cycle;
   \draw (4,1)--(2.9,-.1)--(5.1,-.1)--cycle;   
   \node at (1,.35) {$U_4^2$};  
   \node[wleaf]  at (2.5,0) {}; 
   \node[wleaf]  at (5.75,0) {}; 
   \node[wleaf]  at (6.25,0) {}; 
   \node[wleaf]  at (7.25,0) {}; 
   \node at (4,0.35) {$U_4^1$};   
   \node[invertex]  at (1.25,1.25) {};  
   \node[invertex]  at (2.5,2.5) {}; 
   \node[invertex]  at (2.875,2.875) {}; 
   \node[rootvertex]  at (3.625,3.625) {}; 
   \node[invertex]  at (6.75,0.5) {}; 
\node at (4.5,-1) {$U_8^{2}$};  
\end{tikzpicture}
\quad
\begin{tikzpicture}[scale=.5,font=\tiny,baseline={(0,0)}] 
   \draw (3,3)--(3.5,3.5)--(7,0);
   \draw (5.5,0.5) -- (6,0);
   \draw (2.25,2.25) -- (3,3) -- (5.5,0.5);
   \draw (1,1) -- (2.25,2.25) -- (3.5,1);
   \draw (5,0) -- (5.5,0.5);
   \draw (1,1)--(-.1,-.1)--(2.1,-.1)--cycle;
   \draw (3.5,1) -- (2.4,-.1)-- (4.6,-.1)--cycle;
   \node at (1,0.35) {$U_5$};   
   \node[wleaf]  at (5,0) {}; 
   \node[wleaf]  at (6,0) {}; 
   \node[wleaf]  at (7,0) {}; 
   \node at (3.5,0.35) {$U_4^1$};  
   \node[invertex]  at (2.25,2.25) {};
   \node[invertex]  at (3,3) {}; 
   \node[rootvertex]  at (3.5,3.5) {}; 
   \node[invertex]  at (5.5,0.5) {}; 
\node at (4,-1) {$U_8^{3}$};  
\end{tikzpicture}
\quad
\begin{tikzpicture}[scale=.5,font=\tiny,baseline={(0,0)}] 
   \draw(2.25,2.25)--(3.5,1);
   \draw(1,1)--(2.7,2.7);
   \draw(2.7,2.7)--(5.25,0);
   \draw (2.75,2.75) -- (3.5,3.5) -- (7,0);
   \draw (6,0) -- (6.5,0.5);
   \draw (1,1)--(-.1,-.1)--(2.1,-.1)--cycle;
   \draw (3.5,1) -- (2.4,-.1)-- (4.6,-.1)--cycle;
   \node at (1,0.35) {$U_5$};   
   \node[wleaf]  at (5.25,0) {}; 
   \node[wleaf]  at (6,0) {}; 
   \node[wleaf]  at (7,0) {}; 
   \node at (3.5,0.35) {$U_4^1$};  
   \node[invertex]  at (2.25,2.25) {};
   \node[invertex]  at (2.7,2.7) {}; 
   \node[rootvertex]  at (3.5,3.5) {}; 
   \node[invertex]  at (6.5,0.5) {}; 
\node at (3.5,-1) {$U_8^{4}$};  
\end{tikzpicture}\quad
\begin{tikzpicture}[scale=.5,font=\tiny,baseline={(0,0)}] 
    \draw (3,3)--(3.5,3.5)--(7,0);
   \draw (5.5,0.5) -- (6,0);
   \draw (2.25,2.25) -- (3,3) -- (5.5,0.5);
   \draw (1,1) -- (2.25,2.25) -- (3.5,1);
   \draw (5,0) -- (5.5,0.5);
   \draw (1,1)--(-.1,-.1)--(2.1,-.1)--cycle;
   \draw (3.5,1) -- (2.4,-.1)-- (4.6,-.1)--cycle;
   \node at (1,0.35) {$U_5$};   
   \node[wleaf]  at (5,0) {}; 
   \node[wleaf]  at (6,0) {}; 
   \node[wleaf]  at (7,0) {}; 
   \node at (3.5,0.35) {$U_4^2$};  
   \node[invertex]  at (2.25,2.25) {};
   \node[invertex]  at (3,3) {}; 
   \node[rootvertex]  at (3.5,3.5) {}; 
   \node[invertex]  at (5.5,0.5) {}; 
\node at (3.5,-1) {$U_8^{5}$};  
\end{tikzpicture}
\quad
\begin{tikzpicture}[scale=.5,font=\tiny,baseline={(0,0)}] 
  \draw(2.25,2.25)--(3.5,1);
   \draw(1,1)--(2.7,2.7);
   \draw(2.7,2.7)--(5.25,0);
   \draw (2.75,2.75) -- (3.5,3.5) -- (7,0);
   \draw (6,0) -- (6.5,0.5);
   \draw (1,1)--(-.1,-.1)--(2.1,-.1)--cycle;
   \draw (3.5,1) -- (2.4,-.1)-- (4.6,-.1)--cycle;
   \node at (1,0.35) {$U_5$};   
   \node[wleaf]  at (5.25,0) {}; 
   \node[wleaf]  at (6,0) {}; 
   \node[wleaf]  at (7,0) {}; 
   \node at (3.5,0.35) {$U_4^2$};  
   \node[invertex]  at (2.25,2.25) {};
   \node[invertex]  at (2.7,2.7) {}; 
   \node[rootvertex]  at (3.5,3.5) {}; 
   \node[invertex]  at (6.5,0.5) {}; 
\node at (3.5,-1) {$U_8^{6}$}; 
\end{tikzpicture}
\quad
\begin{tikzpicture}[scale=.5,font=\tiny,baseline={(0,0)}] 
   \draw(0.9,-0.1)--(3,-0.1)--(2,1)--cycle;
   \node at (2,0.3) {$C_4$}; 
   \draw(4.9,-0.1)--(7.1,-0.1)--(6,1)--cycle;
   \node at (6,0.3) {$U_4^2$}; 
   \draw(2,1)--(4.75,3.75)--(8,0.5);
   \draw(2.75,1.75)--(4,0.5);
   \draw(3.5,0)--(4,0.5)--(4.5,0);
   \draw (3.983825,2.983825)--(6,1);
   \draw(7.5,0)--(8,0.5)--(8.5,0);
   \draw(4.75,3.75)--(5.125,4.125)--(9.25,0);
   \node[wleaf]  at (3.5,0) {}; 
   \node[wleaf]  at (4.5,0) {};
   \node[wleaf]  at (7.5,0) {};
   \node[wleaf]  at (8.5,0) {};
   \node[wleaf]  at (9.25,0) {};
   \node[invertex]  at (2.75,1.75) {};
   \node[invertex]  at (4,0.5) {};
   \node[invertex]  at (3.983825,2.983825) {};
   \node[invertex]  at (4.75,3.75) {};
    \node[rootvertex]  at (5.125,4.125) {};
   \node[invertex]  at (8,0.5) {};
   \node at (5.5,-1) {\text{ $U_8^{7}$ }};
\end{tikzpicture}
\quad
\begin{tikzpicture}[scale=.5,font=\tiny,baseline={(0,0)}] 
   \draw(0.9,-0.1)--(3,-0.1)--(2,1)--cycle;
   \node at (2,0.3) {$C_4$}; 
   \draw(4.9,-0.1)--(7.1,-0.1)--(6,1)--cycle;
   \node at (6,0.3) {$U_4^2$}; 
   \draw(2,1)--(4.375,3.375)--(7.75,0);
   \draw(2.75,1.75)--(4,0.5);
   \draw(3.5,0)--(4,0.5)--(4.5,0);
   \draw (3.983825,2.983825)--(6,1);
   \draw(8.25,0)--(8.75,0.5)--(9.25,0);
   \draw(4.375,3.375)--(5.125,4.125)--(8.75,0.5);
   \node[wleaf]  at (3.5,0) {}; 
   \node[wleaf]  at (4.5,0) {};
   \node[wleaf]  at (7.75,0) {};
   \node[wleaf]  at (8.25,0) {};
   \node[wleaf]  at (9.25,0) {};
   \node[invertex]  at (2.75,1.75) {};
   \node[invertex]  at (4,0.5) {};
   \node[invertex]  at (3.983825,2.983825) {};
   \node[invertex]  at (4.375,3.375) {};
    \node[rootvertex]  at (5.125,4.125) {};
   \node[invertex]  at (8.75,0.5) {};
   \node at (5.5,-1) {\text{ $U_8^{8}$ }};
\end{tikzpicture}
\caption{The minimum size $8$-universal trees.}
\end{figure}

\begin{figure}[H]
\centering
\begin{tikzpicture}[scale=.5,font=\tiny,baseline={(0,0)}]  
   \draw(2,1)--(5.875,4.875)--(10.75,0);
   \draw(2.375,1.375)--(3.75,0);
   \draw(3.75,2.75)--(5.5,1);
   \draw(4.125,3.125)--(7.25,0);
   \draw(5.5,4.5)--(9,1);
   \draw(0.9,-0.1)--(3.1,-0.1)--(2,1)--cycle;
   \node at (2,0.3) {$U_4^2$}; 
   \draw(4.4,-0.1)--(6.6,-0.1)--(5.5,1)--cycle;
   \node at (5.5,0.3) {$U_4^1$}; 
   \draw(7.9,-0.1)--(10.1,-0.1)--(9,1)--cycle;
   \node at (9,0.3) {$C_3$}; 
   \node[wleaf]  at (3.75,0) {}; 
   \node[wleaf]  at (7.25,0) {}; 
   \node[wleaf]  at (10.75,0) {}; 
   \node[invertex]  at (2.375,1.375) {};
   \node[invertex]  at (3.75,2.75) {};
   \node[invertex]  at (4.125,3.125) {};
   \node[invertex]  at (5.5,4.5) {};
   \node[rootvertex]  at (5.875,4.875) {};
\node at (5.875,-1) {\text{ $U_9^{1}$ }};  \end{tikzpicture}
\quad
\begin{tikzpicture}[scale=.5,font=\tiny,baseline={(0,0)}] 
   \draw(2,1)--(5.5,4.5)--(10,0);
   \draw(3.375,2.375)--(4.75,1);
   \draw(3.75,2.75)--(6.5,0);
   \draw(5.125,4.125)--(8.25,1);
   \draw(0.9,-0.1)--(3.1,-0.1)--(2,1)--cycle;
   \node at (2,0.3) {$U_5$};
   \draw(3.65,-0.1)--(5.85,-0.1)--(4.75,1)--cycle;
   \node at (4.75,0.3) {$U_4^1$};
   \draw(7.15,-0.1)--(9.35,-0.1)--(8.25,1)--cycle;
   \node at (8.25,0.3) {$C_3$};
   \node[wleaf]  at (6.5,0) {}; 
   \node[wleaf]  at (10,0) {};  
   \node[invertex]  at (3.375,2.375) {};
   \node[invertex]  at (3.75,2.75) {};    
   \node[invertex]  at (5.125,4.125) {};
   \node[rootvertex]  at (5.5,4.5) {};
\node at (5.5,-1) {\text{ $U_9^{2}$ }};  \end{tikzpicture}
\quad
\begin{tikzpicture}[scale=.5,font=\tiny,baseline={(0,0)}] 
   \draw(2,1)--(5.5,4.5)--(10,0);
   \draw(3.375,2.375)--(4.75,1);
   \draw(3.75,2.75)--(6.5,0);
   \draw(5.125,4.125)--(8.25,1);
   \draw(0.9,-0.1)--(3.1,-0.1)--(2,1)--cycle;
   \node at (2,0.3) {$U_5$};
   \draw(3.65,-0.1)--(5.85,-0.1)--(4.75,1)--cycle;
   \node at (4.75,0.3) {$U_4^2$};
   \draw(7.15,-0.1)--(9.35,-0.1)--(8.25,1)--cycle;
   \node at (8.25,0.3) {$C_3$};
   \node[wleaf]  at (6.5,0) {}; 
   \node[wleaf]  at (10,0) {};  
   \node[invertex]  at (3.375,2.375) {};
   \node[invertex]  at (3.75,2.75) {};    
   \node[invertex]  at (5.125,4.125) {};
   \node[rootvertex]  at (5.5,4.5) {};
\node at (5.5,-1) {\text{ $U_9^{3}$ }};  \end{tikzpicture}
\quad
\begin{tikzpicture}[scale=.5,font=\tiny,baseline={(0,0)}] 
   \draw(2,1)--(6.375,5.375)--(11.75,0);
   \draw(2.875,1.875)--(4.25,0.5);
   \draw(3.75,0)--(4.25,0.5)--(4.75,0);
   \draw(4.25,3.25)--(6.5,1);
   \draw(4.625,3.625)--(8.25,0);
   \draw(6,5)--(10,1);
   \draw(0.9,-0.1)--(3.1,-0.1)--(2,1)--cycle;
   \node at (2,0.3) {$C_4$};
   \draw(5.4,-0.1)--(7.6,-0.1)--(6.5,1)--cycle;
   \node at (6.5,0.3) {$U_4^2$};
   \draw(8.95,-0.1)--(11.15,-0.1)--(10,1)--cycle;
   \node at (10,0.3) {$C_3$};
   \node[wleaf]  at (3.75,0) {}; 
   \node[wleaf]  at (4.75,0) {}; 
   \node[wleaf]  at (8.25,0) {}; 
   \node[wleaf]  at (11.75,0) {}; 
   \node[invertex]  at (2.875,1.875) {};
   \node[invertex]  at (4.25,0.5) {};
   \node[invertex]  at (4.25,3.25) {};
   \node[invertex]  at (4.625,3.625) {};
   \node[invertex]  at (6,5) {};
   \node[rootvertex]  at (6.375,5.375) {};
\node at (6.375,-1) {\text{ $U_9^{4}$ }};  \end{tikzpicture}
\quad
\begin{tikzpicture}[scale=.5,font=\tiny,baseline={(0,0)}] 
   \draw(2,1)--(5.5,4.5)--(10,0);
   \draw(3.375,2.375)--(4.75,1);
   \draw(3.75,2.75)--(6.5,0);
   \draw(5.125,4.125)--(8.25,1);
   \draw(0.9,-0.1)--(3.1,-0.1)--(2,1)--cycle;
   \node at (2,0.3) {$U_5$};
   \draw(3.65,-0.1)--(5.85,-0.1)--(4.75,1)--cycle;
   \node at (4.75,0.3) {$C_4$};
   \draw(7.15,-0.1)--(9.35,-0.1)--(8.25,1)--cycle;
   \node at (8.25,0.3) {$B_2$};
   \node[wleaf]  at (6.5,0) {}; 
   \node[wleaf]  at (10,0) {};  
   \node[invertex]  at (3.375,2.375) {};
   \node[invertex]  at (3.75,2.75) {};    
   \node[invertex]  at (5.125,4.125) {};
   \node[rootvertex]  at (5.5,4.5) {};
\node at (5.5,-1) {\text{ $U_9^{5}$ }};  \end{tikzpicture}
\quad
\begin{tikzpicture}[scale=.5,font=\tiny,baseline={(0,0)}] 
   \draw(2,1)--(5.5,4.5)--(10,0);
   \draw(3.375,2.375)--(4.75,1);
   \draw(3.75,2.75)--(6.5,0);
   \draw(5.125,4.125)--(8.25,1);
   \draw(0.9,-0.1)--(3.1,-0.1)--(2,1)--cycle;
   \node at (2,0.3) {$U_5$};
   \draw(3.65,-0.1)--(5.85,-0.1)--(4.75,1)--cycle;
   \node at (4.75,0.3) {$B_2$};
   \draw(7.15,-0.1)--(9.35,-0.1)--(8.25,1)--cycle;
   \node at (8.25,0.3) {$C_4$};
   \node[wleaf]  at (6.5,0) {}; 
   \node[wleaf]  at (10,0) {};  
   \node[invertex]  at (3.375,2.375) {};
   \node[invertex]  at (3.75,2.75) {};    
   \node[invertex]  at (5.125,4.125) {};
   \node[rootvertex]  at (5.5,4.5) {};
\node at (5.5,-1) {\text{ $U_9^{6}$ }};  \end{tikzpicture}
\quad
\begin{tikzpicture}[scale=.5,font=\tiny] 
\draw (1,1) -- (2.25,2.25) -- (3.5,1);
 \draw (2.25,2.25) -- (2.75,2.75) -- (5.5,0);
 \draw (1,1)--(-0.1,-.1)--(2.1,-.1)--cycle;
 \node at (1,0.35) {$U_7$};  
 \draw(2.4,-.1)--(3.5,1)--(4.6,-.1)--cycle;
 \node at (3.5,0.35) {$U_4^1$};  
 \node[wleaf]  at (5.5,0) {};
 \node[invertex]  at (2.25,2.25) {};
 \node[rootvertex]  at (2.75,2.75) {};
\node at (2.75,-1) {$U_9^7$}; 
\end{tikzpicture}
\quad
\begin{tikzpicture}[scale=.5,font=\tiny] 
\draw (1,1) -- (2.25,2.25) -- (3.5,1);
 \draw (2.25,2.25) -- (2.75,2.75) -- (5.5,0);
 \draw (1,1)--(-0.1,-.1)--(2.1,-.1)--cycle;
 \node at (1,0.35) {$U_7$};  
 \draw(2.4,-.1)--(3.5,1)--(4.6,-.1)--cycle;
 \node at (3.5,0.35) {$U_4^2$};  
 \node[wleaf]  at (5.5,0) {};
 \node[invertex]  at (2.25,2.25) {};
 \node[rootvertex]  at (2.75,2.75) {};
\node at (2.75,-1) {$U_9^8$}; 
\end{tikzpicture}
\caption{The minimum size $9$-universal trees.}
\end{figure}

\begin{figure}[H]
   \centering
 \begin{tikzpicture}[scale=.5,font=\tiny,baseline={(0,0)}] 
   \draw(2.25,2.25)--(3.5,1);
   \draw(1,1)--(2.7,2.7);
   \draw(2.7,2.7)--(5.25,0);
   \draw (2.75,2.75) -- (3.5,3.5) -- (7,0);
   \draw (6,0) -- (6.5,0.5);
   \draw (1,1)--(-.1,-.1)--(2.1,-.1)--cycle;
   \draw (3.5,1) -- (2.4,-.1)-- (4.6,-.1)--cycle;
   \node at (1,0.35) {$U_7$};   
   \node[wleaf]  at (5.25,0) {}; 
   \node[wleaf]  at (6,0) {}; 
   \node[wleaf]  at (7,0) {}; 
   \node at (3.5,0.35) {$U_5$};  
   \node[invertex]  at (2.25,2.25) {};
   \node[invertex]  at (2.7,2.7) {}; 
   \node[rootvertex]  at (3.5,3.5) {}; 
   \node[invertex]  at (6.5,0.5) {}; 
\node at (3.5,-1) {$U_{10}^{1}$};  
\end{tikzpicture}\quad
\begin{tikzpicture}[scale=.5,font=\tiny,baseline={(0,0)}] 
    \draw (3,3)--(3.5,3.5)--(7,0);
   \draw (5.5,0.5) -- (6,0);
   \draw (2.25,2.25) -- (3,3) -- (5.5,0.5);
   \draw (1,1) -- (2.25,2.25) -- (3.5,1);
   \draw (5,0) -- (5.5,0.5);
   \draw (1,1)--(-.1,-.1)--(2.1,-.1)--cycle;
   \draw (3.5,1) -- (2.4,-.1)-- (4.6,-.1)--cycle;
   \node at (1,0.35) {$U_7$};   
   \node[wleaf]  at (5,0) {}; 
   \node[wleaf]  at (6,0) {}; 
   \node[wleaf]  at (7,0) {}; 
   \node at (3.5,0.35) {$U_5$};  
   \node[invertex]  at (2.25,2.25) {};
   \node[invertex]  at (3,3) {}; 
   \node[rootvertex]  at (3.5,3.5) {}; 
   \node[invertex]  at (5.5,0.5) {}; 
\node at (3.5,-1) {$U_{10}^{2}$};  
\end{tikzpicture}
\quad
       \begin{tikzpicture}[scale=.5,font=\tiny,baseline={(0,0)}] 
       \draw(1,1)--(2.25,2.25)--(3.5,1);
       \draw(2.25,2.25)--(2.675,2.675)--(5.25,0);
       \draw(2.675,2.675)--(4,4)--(7,1);
       \draw(4,4)--(4.375,4.375)--(8.75,0);
       \draw(-.1,-.1)--(2.1,-.1)--(1,1)--cycle;
       \draw(2.4,-.1)--(4.6,-.1)--(3.5,1)--cycle;
       \draw(5.9,-.1)--(8.1,-.1)--(7,1)--cycle;
       \node at (1.1,0.35) {$U_7$}; 
       \node at (3.65,0.35) {$U_5$}; 
       \node at (7,0.35) {$C_3$}; 
       \node[wleaf]  at (5.25,0) {}; 
       \node[wleaf]  at (8.75,0) {}; 
       \node[invertex]  at (2.25,2.25) {};
       \node[invertex]  at (2.675,2.675) {};
       \node[invertex]  at (4,4) {};
       \node[rootvertex]  at (4.375,4.375) {};
       \node at (5,-1) {\text{ $U_{11}$ }};  
       \end{tikzpicture}
   \caption{The minimum size $n$-universal trees for $n\in\{10,11\}$.}
\end{figure}
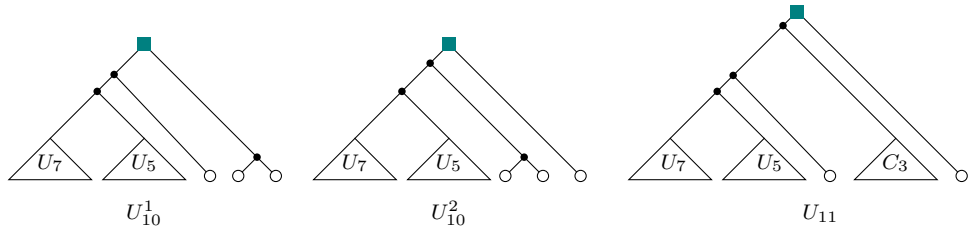

%\clearpage
We conclude this section with the following open problems:
\begin{problem}
Is it true, that for every $n$, there exists an $n$-universal tree of minimum size, in which every leaf has distance at
most $n-1$ from the root?
\end{problem}
The computational results provided such $n$-universal trees for $n\leq 11$. However, our recursive upper bound for the universal tree size results in trees that exceed this distance, and certainly are not optimal.
 
\begin{problem}
Find (a good conjecture for) a general construction for an $n$-universal tree of minimum size for all $n$.
\end{problem}

\section{Tanglegrams} \label{sec:kovetk}
In this section, we turn our attention to tanglegrams, which are essential tools in the study of co-evolution (see, e.g.,\cite{Burt}, \cite{HafnerNadler}, \cite{Matsen}, \cite{page}). We start with some basic definitions. 

\begin{definition} A  {\em (binary) tanglegram} $\T=({L},{R},\sigma)$ is a graph that consists of a left  rooted binary tree ${L}$, a right  rooted 
binary tree ${R}$, such that $\whiteleaf{R}=\whiteleaf{L}$, and a perfect matching  $\sigma$ between the leaves of ${L}$ and ${R}$ (see Figure~\ref{fig:tanglegram}). 
\end{definition} 
Two tanglegrams are considered {\it isomorphic}, if there is a graph isomorphism between
them fixing $r_L$ and $t_R$. The {\em size} of a tanglegram is $\whiteleaf{R}=\whiteleaf{L}=|\sigma|$.
S.C.~Billey, M.~Konvalinka, and F.A.~Matsen \cite{billey} enumerated tanglegrams of size $n$ up to isomorphism, and obtained that their number is asymptotically
\begin{equation} \label{tanglecount}
n! \frac{e^{1/8}4^{n-1}}{\pi n^3}.
\end{equation}

\begin{definition} Given a tanglegram $\T=({L},{R},\sigma)$ and a subset $\emptyset\not= \sigma'\subseteq \sigma$
of matching edges,  the {\it subtanglegram of $\T$ induced by $\sigma'$},  $({L'},{R'},\sigma')$ is defined as follows. The endpoints of the edges in
$\sigma'$ identify a leaf set $X$ in $L$ and a leaf set $Y$ in $R$. $X$ defines an induced binary subtree $L'$ of $L$ 
and $Y$ defines an induced binary subtree $R'$ of $R$. Keep the edges of $\sigma'$ between the leaves 
of $L'$ and $R'$ (see Figure~\ref{fig:tanglegram}). 
\end{definition}

\begin{definition} A tanglegram $\T_1$ is an {\it induced subtanglegram} of the tanglegram $\T$, if $\T_1$ is isomorphic
to a subtanglegram of $\T$ induced by some subset of matching edges.
\end{definition}

\begin{definition} A tanglegram $\T$ is {\it $n$-universal}, if it contains all tanglegrams of size $n$ as induced 
subtanglegrams.
\end{definition}

\begin{figure}[htbp]
\centering
\begin{tikzpicture}[scale=0.65,font=\tiny]
       \draw (2,-2)--(0,0)--(2,2);
       \draw (2,1)--(1.5,1.5);
       \draw (2,0)--(1,-1);
       \draw (2,-1)--(1.5,-1.5);
        \draw[dashed] (2,2)--(3,2);
       \draw[dashed] (2,1)--(3,1);
       \draw[dashed] (2,0)--(3,0);
       \draw[dashed] (2,-1)--(3,-1);
       \draw[dashed] (2,-2)--(3,-2);
      
       \node[invertex]  at (1,-1) {};
       \node[invertex]  at (1.5,1.5) {};
       \node[invertex]  at (1.5,-1.5) {};
       \node[wleaf]  at (2,2) {};
       \node[wleaf]  at (2,1) {};
       \node[wleaf]  at (2,0) {};
       \node[wleaf]  at (2,-1) {};
       \node[wleaf]  at (2,-2) {};
       \node[rootvertex]  at (0,0) {};
       
       \draw (3,-2)--(5,0)--(3,2);
        \draw (3,1)--(3.5,1.5);
       \draw (3,0)--(4,1);
       \draw (3,-1)--(3.5,-1.5);
        \node[wleaf]  at (3,2) {};
       \node[wleaf]  at (3,1) {};
       \node[wleaf]  at (3,0) {};
       \node[wleaf]  at (3,-1) {};
       \node[wleaf]  at (3,-2) {};
       \node[invertex]  at (3.5,1.5) {};
       \node[invertex]  at (3.5,-1.5) {};
       \node[invertex]  at (4,1) {};
       \node[rootvertex]  at (5,0) {};
	   \node at (2.5,2.2) {$e_1$};
    \node at (2.5,1.2) {$e_2$};    
 	\node at (2.5,.2) {$e_3$};
%	\node at (0.05,0.35) {$r$};
%	\node at (5,.35) {$\rho$};	

      \draw (7.5,1)--(6.5,0)--(7.5,-1);
      \draw (7.5,0)--(7,.5);
       \draw[dashed] (7.6,1)--(8.5,1);
       \draw[dashed] (7.5,0)--(8.5,0);
       \draw[dashed] (7.5,-1)--(8.5,-1);
       \draw (8.5,1)--(9.5,0)--(8.5,-1);
       \draw (8.5,0)--(9,.5);
        \node[rootvertex]  at (6.5,0) {};      
       \node[invertex]  at (7,.5) {};
      \node[wleaf]  at (7.5,1) {};
      \node[wleaf]  at (7.5,0) {};
      \node[wleaf]  at (7.5,-1) {};
       \node[invertex]  at (9,.5) {};	
       \node[wleaf]  at (8.5,1) {};
      \node[wleaf]  at (8.5,0) {};
      \node[wleaf]  at (8.5,-1) {};
 	\node at (8,1.2) {$e_1$};
   	\node at (8,.2) {$e_2$};    
 	\node at (8,.-.8) {$e_3$};
  \node[rootvertex]  at (9.5,0) {};

\end{tikzpicture}
\caption{A binary tanglegram $\T=(L,R,\sigma)$ with matching edges $e_1,e_2,e_3$ selected and the subtanglegram induced by the selected edges.}\label{fig:tanglegram}
\end{figure}
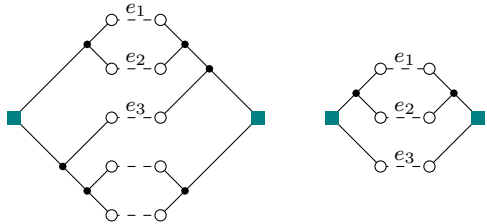

%\subsection{Bounds for the size of the universal tanglegram} 
%In this section, 
We now obtain upper and lower bounds for the minimum size of $n$-universal tanglegrams. 
Let $u(n)$  denote  the minimum size of an $n$-universal tree, and let  $q(n)$  denote  the minimum size of an $n$-universal 
tanglegram. 

\begin{theorem}\label{th:untang}
 The following holds:
\begin{equation}
q(n) \leq u(n)^2= O(n^4).
\end{equation}
\end{theorem}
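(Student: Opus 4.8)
The bound $u(n)^2=O(n^4)$ is immediate from Theorem~\ref{thm:upperbound}, so the whole content lies in the inequality $q(n)\le u(n)^2$. The plan is to build a single $n$-universal tanglegram of size $u(n)^2$ out of one $n$-universal tree $U$ by a ``product'' (grid) construction, where the flexibility needed to realize an \emph{arbitrary} matching on top of two induced tree shapes is supplied by transposing a pair of coordinate indices.

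Fix $U\in\mathcal{U}(n)$ with $\whiteleaf{U}=u(n)=:N$ and index its leaves by $1,\dots,N$. First I would form the blow-up $\hat U$ obtained from $U$ by replacing each leaf $i$ with its own copy $U_i$ of $U$ (identifying $r_{U_i}$ with the $i$-th leaf of $U$). Then $\whiteleaf{\hat U}=N^2$, and its leaves are naturally indexed by pairs $(i,j)\in[N]\times[N]$, where $(i,j)$ denotes the $j$-th leaf of $U_i$. I would then take the tanglegram $\T=(\hat U,\hat U,\sigma)$ whose left and right trees are both $\hat U$ and whose matching is the transposition $\sigma\colon(i,j)\mapsto(j,i)$; this is a perfect matching on $[N]\times[N]$, so $\T$ has size $N^2=u(n)^2$.

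The key structural observation to record is that selecting one leaf from each of several distinct copies collapses those copies to points: if $(i_1,j_1),\dots,(i_s,j_s)$ are leaves of $\hat U$ whose outer indices $i_1,\dots,i_s$ are pairwise distinct, then the binary subtree of $\hat U$ they induce is isomorphic to the subtree of $U$ induced by $\{i_1,\dots,i_s\}$, since all branching among the chosen leaves already happens in the outer copy of $U$. Granting this, to show $\T\in\mathcal{U}(n)$ I would take an arbitrary size-$n$ target $\T_0=(L_0,R_0,\sigma_0)$, label the leaves of $L_0$ and of $R_0$ by $[n]$ so that $\sigma_0$ becomes a permutation of $[n]$, use $n$-universality of $U$ to pick distinct outer indices $p_1,\dots,p_n$ inducing $L_0$ (with $p_k$ playing leaf $k$) and distinct outer indices $q_1,\dots,q_n$ inducing $R_0$ (with $q_m$ playing leaf $m$), and then \emph{solve for the inner indices}: choose the left leaves $(p_k,q_{\sigma_0(k)})$ and the right leaves $(q_m,p_{\sigma_0^{-1}(m)})$. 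By the collapsing observation these selections induce $L_0$ on the left and $R_0$ on the right, and under $\sigma$ the left leaf $(p_k,q_{\sigma_0(k)})$ is matched precisely to $(q_{\sigma_0(k)},p_k)$, i.e.\ to the right leaf representing leaf $\sigma_0(k)$ of $R_0$; hence the induced subtanglegram is exactly $\T_0$.

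I expect the main obstacle to be bookkeeping rather than a genuine difficulty: one must verify carefully that the transposition really converts the \emph{prescribed} matching $\sigma_0$ into consistent, realizable constraints on the free inner coordinates (this is exactly why the two coordinates are swapped), and that the $n$ selected matching edges form a legitimate induced subtanglegram with left shape $L_0$, right shape $R_0$, and matching $\sigma_0$. Once this is checked, $\T$ is an $n$-universal tanglegram of size $u(n)^2$, giving $q(n)\le u(n)^2$, and combining with Theorem~\ref{thm:upperbound} yields $q(n)\le u(n)^2=O(n^4)$.
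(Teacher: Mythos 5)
Your proposal is correct and is essentially the paper's own construction: the paper also builds both sides by attaching $u(n)$-leaf trees to every leaf of an $n$-universal tree and uses a matching in which each leaf of a left small tree goes to a distinct right small tree (your transposition $(i,j)\mapsto(j,i)$ is exactly such a matching), then embeds $L_0$ and $R_0$ in the outer copies and selects the edges realizing $\sigma_0$. Your write-up is in fact more explicit than the paper's about the collapsing observation and the index bookkeeping, but the approach is the same.
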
 
\begin{proof}
Take two copies, $L$ and $R$, of the minimum size $n$-universal tree. Each has $u(n)$ leaves.
Both in $L$ and $R$, attach to every leaf an {\it arbitrary} $u(n)$-leaf rooted binary tree, and obtain in this way trees $L'$ and $R'$, each with
$u(n)^2$ leaves. Both $L'$ and $R'$ have $u(n)$ ``small trees" rooted in the leaves of the starting trees $L$ and $R$. 
Now construct a tanglegram $\T$ with trees $L'$ and $R'$ in the following way: for every small tree in $L'$, send a matching 
edge from one of the $u(n)$ leaves of the small tree into exactly one of the $u(n)$ small trees of $R'$ (i.e., ``match" each of the $u(n)$ leaves of a small tree in $L'$ with a distinct small tree of $R'$). Now the tanglegram $\T$ is $n$-universal. Indeed, consider any tanglegram $\T_1$ of size $n$ with left tree $L_1$ and right tree $R_1$.
By the choice of the universal tree, $L_1$ is an induced binary subtree of $L$ and $R_1$ is an induced binary subtree of $R$. Now, whatever the matching between the left tree $L_1$ and right tree $R_1$ is in the tanglegram, we inserted edges 
between the leaves of  $L'$ and $R'$ in the construction of $\T$ that realize this matching. The claim now follows from Theorem~\ref{thm:upperbound}.
\end{proof}

\begin{theorem}
\begin{equation}
q(n)\geq  \frac{4n^2}{e^2}(1+o(1)).
\end{equation}
\end{theorem}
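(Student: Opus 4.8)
The plan is to prove this lower bound by a counting argument, matching the number of induced subtanglegrams of a universal tanglegram against the known enumeration of all tanglegrams. Let $\T=(L,R,\sigma)$ be an $n$-universal tanglegram of minimum size $q=q(n)$, so that $|\sigma|=q$. Every induced subtanglegram of size $n$ is, by definition, induced by some $n$-element subset $\sigma'\subseteq\sigma$, and there are exactly $\binom{q}{n}$ such subsets. Since each subset yields a single isomorphism class of subtanglegram, the number of distinct size-$n$ induced subtanglegrams is at most $\binom{q}{n}$. As $\T$ is $n$-universal, it must realize every one of the isomorphism classes of size-$n$ tanglegrams. Denoting by $t_n$ the number of size-$n$ tanglegrams up to isomorphism, this gives the fundamental inequality
\[
t_n \leq \binom{q}{n}.
\]

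Next I would feed in the enumeration of S.C.~Billey, M.~Konvalinka, and F.A.~Matsen recorded in Equation~\eqref{tanglecount}, namely $t_n \sim n!\,\frac{e^{1/8}4^{n-1}}{\pi n^3}$, together with the elementary estimate $\binom{q}{n}\leq q^n/n!$. Combining these with the displayed inequality yields
\[
\frac{q^n}{n!} \geq \binom{q}{n} \geq t_n \sim n!\,\frac{e^{1/8}4^{n-1}}{\pi n^3},
\]
and hence, after multiplying by $n!$ and taking $n$-th roots (using that $a_n\sim b_n$ implies $a_n^{1/n}\sim b_n^{1/n}$, since $(1+o(1))^{1/n}\to 1$),
\[
q \geq \left[(n!)^2\,\frac{e^{1/8}4^{n-1}}{\pi n^3}\right]^{1/n}(1+o(1)).
\]

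The final step is to extract the asymptotics by Stirling's formula. Since $(n!)^{1/n}\sim n/e$, we get $\big[(n!)^2\big]^{1/n}\sim n^2/e^2$; moreover $\big[4^{n-1}\big]^{1/n}=4^{\,1-1/n}\to 4$, while $\big[e^{1/8}/(\pi n^3)\big]^{1/n}\to 1$ because any fixed polynomial factor raised to the power $1/n$ tends to $1$. Multiplying these contributions gives $q\geq \frac{4n^2}{e^2}(1+o(1))$, which is the claimed bound.

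The argument is essentially routine, and I do not anticipate a serious obstacle; the only genuinely combinatorial input is the observation that the number of induced subtanglegrams is controlled by $\binom{q}{n}$, so that universality forces $t_n\leq\binom{q}{n}$. The one place demanding care is the asymptotic bookkeeping: one must verify that the subexponential factors (Stirling's $\sqrt{2\pi n}$ correction, the polynomial $n^{-3}$, the constant $e^{1/8}$, and the $4^{-1/n}$) are all absorbed into the $(1+o(1))$ and leave the constant $4/e^2$ untouched, so that it is precisely the doubly-exponential growth $(n!)^2\,4^n$ of $n!\,t_n$ that pins down the leading term.
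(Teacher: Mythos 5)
Your proposal is correct and is essentially identical to the paper's proof: both compare the at most $\binom{q(n)}{n}$ induced size-$n$ subtanglegrams against the Billey--Konvalinka--Matsen count \eqref{tanglecount}, bound $\binom{q(n)}{n}\le q(n)^n/n!$, and extract the constant $4/e^2$ via Stirling's formula. The paper merely compresses the asymptotic bookkeeping that you spell out; there is no substantive difference.
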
 
\begin{proof}
Comparing all possible size $n$ subtanglegrams of the universal tanglegram to the number of size $n$ tanglegrams in \eqref{tanglecount}, we have 
$$\frac{q(n)^n}{n!}\geq \binom{q(n)}{n}\geq  n! \frac{e^{1/8}4^{n-1}}{\pi n^3}(1+o(1)),$$
which, using Stirling's formula, gives the claimed result. 
\end{proof}

\section{Generalization for \texorpdfstring{$d$}{d}-ary trees and \texorpdfstring{$d$}{d}-ary tanglegrams}\label{sec:dary}
We conclude this manuscript with a brief section on $d$-ary trees and $d$-ary tanglegrams.

\begin{definition} Let $d\ge 2$.  A \emph{rooted $d$-ary tree} is a rooted tree where every non-leaf vertex has at least two and at most $d$ descendants. The 1-vertex tree is a rooted $d$-ary tree.
An \emph{r-$d$-r tree} is a rooted $d$-ary redleaf tree.
\end{definition}

\begin{definition} Let $T$ be a rooted $d$-ary tree (redleaf or not) and let $S$ be a subset of its leaves. The \emph{$d$-ary subtree induced by $S$} 
is a rooted $d$-ary tree $T^{\star}$ obtained as follows:
Let $F$ be the smallest subtree  of $T$ that contains $S$, and let $r_{T^{\star}}$ be the vertex of $F$ closest to $r_T$. $F$ is a subdivision of a unique rooted $d$-ary tree $T^{\star}$. In other words, we obtain $T^{\star}$ by suppressing the non-root  vertices of degree 2 in $F$. The leaves maintain their original coloration, i.e., $T^{\star}$ is a redleaf tree precisely when $T$ is a redleaf tree and $S$ contains $\ell_T$. The tree $T^{\prime}$ is an \emph{induced $d$-ary subtree} of $T$ if it is induced by some subset $S^{\prime}$ of the leaves of $T$. 
\end{definition}

\begin{definition} A rooted $d$-ary tree is \emph{$n$-universal} if it contains all $n$-leaf rooted $d$-ary trees as induced $d$-ary subtrees (see Figure~\ref{fig:daryuniversal}). 
$\mathcal{U}^{(d)}(n)$ denotes the class of $n$-universal $d$-ary trees and $u^{(d)}(n)=\min\{\whiteleaf{T}:T\in\mathcal{U}^{(d)}(n)\}$.
An r-$d$-r tree is \emph{$n$-universal} if it contains all r-$d$-r trees $T$ with $n$ white leaves as induced $d$-ary subtrees. $\mathcal{U}_{1}^{(d)}(n)$ denotes the class of $n$-universal r-$2$-r trees and $u_1^{(d)}(n)=\min\{\whiteleaf{T}:T\in\mathcal{U}_1^{(d)}(n)\}$.
\end{definition}

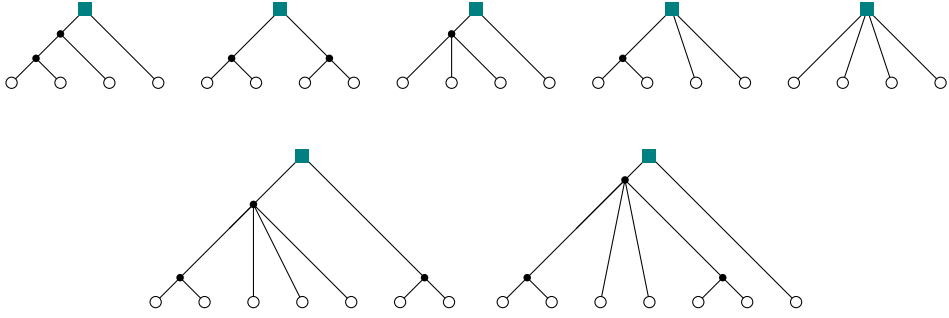
\begin{figure}[htbp]
   \centering
    \begin{tikzpicture}[scale=.65,font=\tiny] 
    %All 4-ary trees on 4 leaves
    \draw(1,6)--(2.5,7.5)--(4,6);
    \draw(2,6)--(1.5,6.5);
    \draw(3,6)--(2,7);
    \node[wleaf]  at (1,6) {}; 
    \node[wleaf]  at (2,6) {}; 
    \node[wleaf]  at (3,6) {}; 
    \node[wleaf]  at (4,6) {}; 
    \node[invertex]  at (1.5,6.5) {};
    \node[invertex]  at (2,7) {};
    \node[rootvertex]  at (2.5,7.5) {};

    \draw(5,6)--(6.5,7.5)--(8,6);
    \draw(6,6)--(5.5,6.5);
    \draw(7,6)--(7.5,6.5);
    \node[wleaf]  at (5,6) {}; 
    \node[wleaf]  at (6,6) {}; 
    \node[wleaf]  at (7,6) {}; 
    \node[wleaf]  at (8,6) {}; 
    \node[invertex]  at (5.5,6.5) {};
    \node[invertex]  at (7.5,6.5) {};
    \node[rootvertex]  at (6.5,7.5) {};

    \draw(9,6)--(10.5,7.5)--(12,6);
    \draw(10,6)--(10,7)--(11,6);
    \node[wleaf]  at (9,6) {}; 
    \node[wleaf]  at (10,6) {}; 
    \node[wleaf]  at (11,6) {}; 
    \node[wleaf]  at (12,6) {}; 
    \node[invertex]  at (10,7) {};
    \node[rootvertex]  at (10.5,7.5) {};

    \draw(13,6)--(14.5,7.5)--(16,6);
    \draw(14,6)--(13.5,6.5);
    \draw(15,6)--(14.5,7.5);
    \node[wleaf]  at (13,6) {}; 
    \node[wleaf]  at (14,6) {}; 
    \node[wleaf]  at (15,6) {}; 
    \node[wleaf]  at (16,6) {}; 
    \node[invertex]  at (13.5,6.5) {};
    \node[rootvertex]  at (14.5,7.5) {};

    \draw(17,6)--(18.5,7.5)--(20,6);
    \draw(18,6)--(18.5,7.5);
    \draw(19,6)--(18.5,7.5);
    \node[wleaf]  at (17,6) {}; 
    \node[wleaf]  at (18,6) {}; 
    \node[wleaf]  at (19,6) {}; 
    \node[wleaf]  at (20,6) {}; 
    \node[rootvertex]  at (18.5,7.5) {};
    \end{tikzpicture}\vskip .3in
    
    \begin{tikzpicture}[scale=.65,font=\tiny] 
     \draw (0,0)--(.5,.5)--(1,0);
     \draw (.5,.5)--(2,2)--(2,0);
     \draw (3,0)--(2,2)--(4,0);
     \draw (1.5,1.5)--(3,3)--(5.5,.5);
     \draw (5,0)--(5.5,.5)--(6,0);    
     \node[wleaf]  at (0,0) {}; 
     \node[wleaf]  at (1,0) {};   
     \node[wleaf]  at (2,0) {}; 
     \node[wleaf]  at (3,0) {};   
     \node[wleaf]  at (4,0) {}; 
     \node[wleaf]  at (5,0) {};   
      \node[wleaf]  at (6,0) {}; 
     \node[invertex]  at (.5,.5) {};  
     \node[invertex]  at (5.5,.5) {}; 
     \node[invertex]  at (2,2) {};   
     \node[rootvertex]  at (3,3) {}; 
     \end{tikzpicture}
     \quad
   \begin{tikzpicture}[scale=.65,font=\tiny] 
     \draw (0,0)--(.5,.5)--(1,0);
     \draw (.5,.5)--(2.5,2.5)--(2,0);
     \draw (3,0)--(2.5,2.5)--(4.5,.5);
     \draw (1.5,1.5)--(3,3)--(6,0);
     \draw (4,0)--(4.5,.5)--(5,0);    
     \node[wleaf]  at (0,0) {}; 
     \node[wleaf]  at (1,0) {};   
     \node[wleaf]  at (2,0) {}; 
     \node[wleaf]  at (3,0) {};   
     \node[wleaf]  at (4,0) {}; 
     \node[wleaf]  at (5,0) {};   
      \node[wleaf]  at (6,0) {}; 
     \node[invertex]  at (.5,.5) {};  
     \node[invertex]  at (4.5,.5) {}; 
     \node[invertex]  at (2.5,2.5) {};   
     \node[rootvertex]  at (3,3) {}; 
     \end{tikzpicture}
    \caption{All rooted $4$-ary trees of size four and the two $4$-universal rooted $4$-ary trees of minimum size.}
    \label{fig:daryuniversal}
\end{figure}

We also need to extend the definition of the $\oplus$ operation to more than two trees.
\begin{definition} For any $s\ge 2$, let $T_1, \ldots, T_s$ be rooted trees, at most one of which is a redleaf tree. Then, the tree $T=\oplus(T_1,\ldots,T_s)$ is obtained by joining a root vertex $r_{T}$ to the roots of each tree $T_i$ for $1 \leq i \leq s$, i.e., by introducing a new vertex $r_{T}$ and the edges $(r_T,r_{T_i})$ for $1 \leq i \leq s$. $T$ is a redleaf tree precisely when one of the trees $T_1, \ldots, T_s$ is a redleaf tree.
\end{definition}

The proof of Theorem~\ref{thm:lowerbound} extends to $d$-ary trees, so
$u^{(d)}(n)=O(n\log(n))$.
Moreover, the following upper bounds can be proven similarly to the proofs of the corresponding binary versions earlier. 

\begin{lemma} Let $n,d\ge 2$, 
$T_{d+1}\in\mathcal{U}^{(d)}_1\left(\lfloor\frac{n}{2}\rfloor\right)$, 
and for $j\in[d]$, let
$T_j\in\mathcal{U}^{(d)}\left(\lfloor\frac{n}{2}\rfloor\right)$. Then, we have $T_{d+1}[\oplus(T_1,\ldots, T_{d})]\in\mathcal{U}^{(d)}(n)$ and consequently,
$$u^{(d)}(n)\le
u_1^{(d)}\left(\left\lfloor \frac{n}{2} \right\rfloor\right) 
+d\cdot u^{(d)}\left(\left\lfloor \frac{n}{2} \right\rfloor\right) .$$
\end{lemma}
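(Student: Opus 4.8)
The plan is to mimic the proof of Lemma~\ref{lm:unhalf} almost verbatim, replacing the single binary split at a white leaf centroid by a split into up to $d$ branches. First I would fix an arbitrary rooted $d$-ary tree $F$ with $n$ white leaves and let $v$ be a white leaf centroid vertex of $F$. By Lemma~\ref{lm:balancer} (which is stated for arbitrary trees and is exactly the version phrased in terms of the white leaf centroid rather than the ordinary centroid), $v$ is an internal vertex; let $c_1,\ldots,c_s$ be its children, where $2\le s\le d$ since $F$ is $d$-ary.

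Next I would carve $F$ into $s+1$ pieces. Let $F_{d+1}$ be the r-$d$-r tree obtained from $F$ by deleting all descendants of $v$ and coloring $v$ red (so $\ell_{F_{d+1}}=v$), and for $j\in[s]$ let $F_j$ be the subtree of $F$ rooted at $c_j$. The components of $F-v$ are precisely the subtrees rooted at the $c_j$ together with the part of $F$ lying above $v$; since $v$ is internal (hence not white), Lemma~\ref{lm:balancer} gives $\tau_W(v)\le\lfloor\whiteleaf{F}/2\rfloor=\lfloor n/2\rfloor$, so each of $\whiteleaf{F_1},\ldots,\whiteleaf{F_s},\whiteleaf{F_{d+1}}$ is at most $\lfloor n/2\rfloor$. (When $v=r_F$ the piece $F_{d+1}$ is a single red leaf with no white leaves, which embeds into $T_{d+1}$ via its red leaf and is otherwise harmless.)

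Then, by hypothesis $F_{d+1}$ embeds as an induced $d$-ary subtree into $T_{d+1}\in\mathcal{U}^{(d)}_1(\lfloor n/2\rfloor)$, and each $F_j$ for $j\in[s]$ embeds into $T_j\in\mathcal{U}^{(d)}(\lfloor n/2\rfloor)$; here the inequality $s\le d$ is exactly what guarantees that enough universal branches $T_1,\ldots,T_d$ are available. Since $F=F_{d+1}[\oplus(F_1,\ldots,F_s)]$, selecting the union of the leaf sets realizing these embeddings inside $T_{d+1}[\oplus(T_1,\ldots,T_d)]$ realizes $F$ as an induced $d$-ary subtree (the root of $\oplus(T_1,\ldots,T_d)$ survives suppression because $s\ge 2$ branches are used, and the unused branches $T_{s+1},\ldots,T_d$ simply carry no selected leaves). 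Hence $T_{d+1}[\oplus(T_1,\ldots,T_d)]\in\mathcal{U}^{(d)}(n)$, and counting white leaves via the additivity relations $\whiteleaf{R[Q]}=\whiteleaf{R}+\whiteleaf{Q}$ and $\whiteleaf{\oplus(T_1,\ldots,T_d)}=\sum_{j=1}^d\whiteleaf{T_j}$, specialized to minimum-size universal trees, yields the stated bound on $u^{(d)}(n)$.

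The main obstacle I anticipate is the bookkeeping forced by the variable arity: one must confirm that the white leaf centroid (not the ordinary centroid, which need not balance \emph{leaf} counts once internal vertices can have many children) bounds by $\lfloor n/2\rfloor$ simultaneously the upward piece $F_{d+1}$ and every downward piece $F_j$, and that the compatibility of inducedness with the two operations $\oplus(\cdot)$ and $R[Q]$ carries over to $d$-ary trees unchanged. Both points reduce directly to Lemma~\ref{lm:balancer} and to the definitions of $\oplus(\cdot)$ and $R[Q]$, so no genuinely new difficulty arises beyond the binary case.
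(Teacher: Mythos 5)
Your proposal is correct and is essentially the paper's own argument: the paper proves the $d$-ary upper bounds ``similarly to the proofs of the corresponding binary versions,'' and your proof is precisely the natural generalization of the proof of Lemma~\ref{lm:unhalf}, decomposing $F$ at a white leaf centroid vertex $v$ into the upward r-$d$-r piece and the subtrees rooted at the children of $v$, invoking Lemma~\ref{lm:balancer} for the $\lfloor n/2\rfloor$ bounds, and recombining via $F=F_{d+1}[\oplus(F_1,\ldots,F_s)]$. Your attention to the points that actually change in the $d$-ary setting (the number of children $s\le d$, the need for the white leaf centroid rather than the ordinary centroid, and the survival of the joining vertex under suppression) matches the paper's own remarks.
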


\begin{lemma} Let $n,d\ge 2$,
$T_{d},T_{d+1}\in\mathcal{U}_1^{(d)}\left(\left\lfloor\frac{n}{2}\right\rfloor\right)$,  $T_{d-1}\in\mathcal{U}^{(d)}(n)$, and for $j\in[d-2]$, let
 $T_j\in\mathcal{U}^{(d)}\left(\lfloor\frac{n}{2}\rfloor\right)$.
 Then, $T_{d+1}[\oplus (T_1,T_2,\ldots T_{d})]\in\mathcal{U}_1^{(d)}(n)$, and consequently
\begin{eqnarray*}
u_1^{(d)}(n)&\le&  2u_1^{(d)}\left(\left\lfloor \frac{n}{2}\right\rfloor \right) +u^{(d)}(n)
+(d-2)u^{(d)}\left(\left\lfloor \frac{n}{2}\right\rfloor \right)\\
&\leq&3u_1^{(d)}\left(\left\lfloor \frac{n}{2}\right\rfloor\right) 
+(2d-2)u^{(d)}\left(\left\lfloor \frac{n}{2}\right\rfloor\right). 
\end{eqnarray*}
\end{lemma}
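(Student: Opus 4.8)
The plan is to follow the proof of Lemma~\ref{lm:u1nhalf} almost verbatim, upgrading the binary split at a white leaf centroid to a $d$-ary one; the white leaf centroid and the balancer lemma (Lemma~\ref{lm:balancer}) are stated for arbitrary trees, so they transfer to r-$d$-r trees with no change. I first record the white-leaf count of $T := T_{d+1}[\oplus(T_1,\ldots,T_d)]$: since the operations $\oplus$ and $[\cdot]$ are additive on white-leaf counts, $\whiteleaf{T} = \whiteleaf{T_{d+1}} + \sum_{i=1}^d \whiteleaf{T_i} = 2u_1^{(d)}(\lfloor n/2\rfloor) + u^{(d)}(n) + (d-2)u^{(d)}(\lfloor n/2\rfloor)$, which is exactly the first displayed bound once $T\in\mathcal{U}_1^{(d)}(n)$ is established. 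The second bound then follows immediately by substituting $u^{(d)}(n)\le u_1^{(d)}(\lfloor n/2\rfloor)+d\,u^{(d)}(\lfloor n/2\rfloor)$ from the preceding lemma.

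To show $T\in\mathcal{U}_1^{(d)}(n)$, I take an arbitrary r-$d$-r tree $F$ with $n$ white leaves and red leaf $\ell_F$, let $v$ be a white leaf centroid of $F$, and recall from Lemma~\ref{lm:balancer} that $v$ is internal and every component of $F-v$ carries at most $\lfloor n/2\rfloor$ white leaves. If $\ell_F$ is a descendant of $v$, I split at $v$: let $F_{d+1}$ be the redleaf tree obtained by deleting the descendants of $v$ and setting $\ell_{F_{d+1}}=v$, and let $F_1,\ldots,F_s$ ($2\le s\le d$) be the subtrees rooted at the children of $v$, so that $F=F_{d+1}[\oplus(F_1,\ldots,F_s)]$. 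Exactly one $F_i$ contains $\ell_F$ and is therefore a redleaf tree, and the centroid bound gives $\whiteleaf{F_{d+1}}\le\lfloor n/2\rfloor$ and $\whiteleaf{F_i}\le\lfloor n/2\rfloor$ for every $i$.

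If $\ell_F$ is not a descendant of $v$, I split at $z$, the last common ancestor of $\ell_F$ and $v$: its children include a vertex $x$ on the $z$-$\ell_F$ path, whose subtree $F_x$ is a redleaf tree, and a distinct vertex $y$ on the $z$-$v$ path, whose subtree $F_y$ contains $v$; there are $t-2\le d-2$ remaining children with ordinary subtrees. With $F_{d+1}$ the redleaf tree above $z$ (setting $\ell_{F_{d+1}}=z$), again $F=F_{d+1}[\oplus(\ldots)]$. The key observation is that $F_y$ contains the entire subtree rooted at $v$, so the centroid bound applied to the upward component of $F-v$ forces $\whiteleaf{F_y}\ge\lceil n/2\rceil$; consequently the remaining pieces $F_{d+1}$, $F_x$, and the $t-2$ ordinary subtrees together carry $n-\whiteleaf{F_y}\le\lfloor n/2\rfloor$ white leaves, hence each has at most $\lfloor n/2\rfloor$ white leaves, while $F_y$ satisfies only the trivial bound $\whiteleaf{F_y}\le n$.

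It remains to assign the child subtrees injectively to $T_1,\ldots,T_d$ so that each embeds as an induced $d$-ary subtree. In either case the unique redleaf child subtree maps into $T_d\in\mathcal{U}_1^{(d)}(\lfloor n/2\rfloor)$; in the second case the possibly large subtree $F_y$ maps into $T_{d-1}\in\mathcal{U}^{(d)}(n)$, which is the only place the full $n$-universality of $T_{d-1}$ is used, and the at most $d-2$ remaining ordinary subtrees (each of size $\le\lfloor n/2\rfloor$) map injectively into $T_1,\ldots,T_{d-2}\in\mathcal{U}^{(d)}(\lfloor n/2\rfloor)$; in the first case the at most $d-1$ ordinary subtrees, each of size $\le\lfloor n/2\rfloor$, map injectively into $T_1,\ldots,T_{d-1}$. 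Since the splitting vertex has at most $d$ children, such an injection exists, $F_{d+1}$ embeds into $T_{d+1}$, and therefore $F=F_{d+1}[\oplus(\cdots)]$ embeds into $T$. I expect the main obstacle to be precisely this bookkeeping in the second case: verifying that routing the single oversized $v$-subtree into the lone $\mathcal{U}^{(d)}(n)$ slot leaves every other piece below $\lfloor n/2\rfloor$ white leaves, and that the number of ordinary small pieces never exceeds the number of available small universal trees, which is exactly where the bound of at most $d$ children at the splitting vertex is essential.
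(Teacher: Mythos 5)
Your proof is correct and follows exactly the route the paper intends: the paper states this lemma without proof, remarking only that it can be proven similarly to the corresponding binary version, i.e., Lemma~\ref{lm:u1nhalf}, and your argument is precisely that adaptation (white leaf centroid split, two cases according to whether $\ell_F$ is a descendant of $v$, injective assignment of the pieces to the trees $T_j$, with the count $2u_1^{(d)}(\lfloor n/2\rfloor)+u^{(d)}(n)+(d-2)u^{(d)}(\lfloor n/2\rfloor)$ and then substitution of the preceding lemma). One point is worth highlighting: your bookkeeping in the second case is more careful than the paper's own binary writeup. In the proof of Lemma~\ref{lm:u1nhalf}, the paper asserts $\whiteleaf{F_3}\le\lfloor n/2\rfloor$ ``by Lemma~\ref{lm:balancer}'', but this is false in general: as you observe, the subtree rooted at $y$ contains the whole subtree below the centroid $v$, hence has at least $\lceil n/2\rceil$ white leaves and may have as many as $n$. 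The correct justification, which you supply, is that the \emph{complementary} pieces together carry at most $n-\lceil n/2\rceil=\lfloor n/2\rfloor$ white leaves, while the single possibly large piece is routed into the unique slot with full $n$-universality ($T_3$ in the binary case, $T_{d-1}$ here); this observation is exactly why the lemma must reserve one coordinate in $\mathcal{U}^{(d)}(n)$ rather than $\mathcal{U}^{(d)}(\lfloor n/2\rfloor)$, and your version of the argument makes that necessity visible where the paper's binary proof obscures it.
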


\begin{lemma}\label{lm:dseq} Fix  $d\ge 2$.
Define the $\{A_i\}, \{B_i\},\{C_i\}$ sequences  with initial conditions
$A_1=1$, $B_1=d$, %$C_1=2d+1$, 
and recursions
 $A_{k+1}=3A_k+B_k$, $B_{k+1}=(2d-2)A_k+dB_k$  for $k\ge 1$. Moreover, for all $k\geq 1$, define $C_k=2A_k+B_k$.
Then $C_k=(d+2)^{k}$, and consequently $A_k,B_k=O((d+2)^k)$. 
\end{lemma}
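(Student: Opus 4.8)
The plan is to prove the identity $C_k=(d+2)^k$ by induction on $k$, taking advantage of the fact that the combination $C_k=2A_k+B_k$ is designed exactly so that it obeys a single geometric recursion; the bounds $A_k,B_k=O((d+2)^k)$ will then follow as an easy corollary. First I would check the base case directly from the initial conditions: $C_1=2A_1+B_1=2\cdot 1+d=d+2=(d+2)^1$.

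For the inductive step, assuming $C_k=(d+2)^k$, I would substitute both recursions into $C_{k+1}$ and collect terms:
\[
C_{k+1}=2A_{k+1}+B_{k+1}=2(3A_k+B_k)+\bigl((2d-2)A_k+dB_k\bigr)=(2d+4)A_k+(d+2)B_k=(d+2)(2A_k+B_k)=(d+2)C_k.
\]
By the inductive hypothesis the right-hand side equals $(d+2)^{k+1}$, which closes the induction and proves $C_k=(d+2)^k$ for all $k\ge 1$. (The underlying reason the combination $2A_k+B_k$ linearizes so cleanly is that $(2,1)$ is a left eigenvector of the transition matrix $\left(\begin{smallmatrix}3&1\\2d-2&d\end{smallmatrix}\right)$ with eigenvalue $d+2$, the remaining eigenvalue being $1$; the bare induction, however, needs none of this structure.)

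For the final claim, I would observe that for $d\ge 2$ all the recursion coefficients $3,1,2d-2,d$ are nonnegative and the initial values $A_1=1$, $B_1=d$ are positive, so a trivial induction shows $A_k,B_k>0$ for every $k$. Consequently $0<A_k\le 2A_k+B_k=C_k$ and $0<B_k\le 2A_k+B_k=C_k$, so both sequences are bounded above by $C_k=(d+2)^k$, which yields $A_k,B_k=O((d+2)^k)$.

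I do not anticipate a genuine obstacle here: once the quantity $C_k=2A_k+B_k$ has been written down (as the statement already does), the entire argument reduces to the one-line algebraic collapse above, and the order-of-growth estimates are immediate from positivity of the entries. The only conceptual content is the choice of the diagonalizing combination, which the lemma hands us for free.
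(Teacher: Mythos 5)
Your proof is correct: the base case, the one-line verification that $C_{k+1}=(d+2)C_k$, and the positivity argument giving $A_k,B_k\le C_k$ are all sound. It is worth noting how this sits relative to the paper. The paper states this lemma without proof (the $d$-ary section only remarks that the results follow as in the binary case), and its binary analogue, Lemma~\ref{lm:halfsequence}, is proved by a different method: there the authors eliminate $a_n$ to obtain a second-order recursion $b_n=5b_{n-1}-4b_{n-2}$ and solve it by the characteristic-root method. Your route instead exploits the combination $C_k=2A_k+B_k$ that the statement itself supplies --- i.e., the left eigenvector $(2,1)$ of the transition matrix with eigenvalue $d+2$ --- which collapses the two-dimensional recursion to a single geometric one. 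This buys two things: it avoids solving a second-order recursion with $d$-dependent coefficients (where the elimination step would be messier than in the binary case), and it delivers the growth bound for $A_k$ and $B_k$ directly from positivity rather than from an explicit closed form. Incidentally, applying your method to the binary case ($d=2$, where $C_k=4^k$) immediately gives $a_k+b_k\le 4^k$; this sidesteps the closed-form computation in the paper's Lemma~\ref{lm:halfsequence}, whose stated formula $b_n=\frac{5}{6}\cdot 4^n-\frac{4}{3}$ is in fact inconsistent with the formula $b_n=\frac{1}{3}\cdot 4^n+\frac{2}{3}$ derived in its own proof (the latter matches the initial terms $b_1=2$, $b_2=6$), so your approach is also the more robust one.
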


\begin{lemma} 
Using $A_k,B_k$ from Lemma~\ref{lm:dseq} for $n\ge 2^k$ we have
$$
u^{(d)}(n)\le A_k u^{(d)}_1\left(\left\lfloor2^{-k} n\right\rfloor\right)+B_ku^{(d)}\left(\left\lfloor2^{-k} n\right\rfloor\right).$$
\end{lemma}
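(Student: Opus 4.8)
The plan is to proceed by induction on $k$, mirroring the proof of Theorem~\ref{thm:upperbound} but substituting the two $d$-ary recursive lemmas stated immediately above for Lemmas~\ref{lm:unhalf} and~\ref{lm:u1nhalf}. As in the binary case, I would first record the two ingredients needed. The functions $u^{(d)}$ and $u_1^{(d)}$ are monotone increasing (immediate from the definitions, since any $n$-universal $d$-ary tree is also $(n-1)$-universal), and for $0<\alpha<1$ one has $\lfloor\alpha\lfloor\alpha n\rfloor\rfloor\le\lfloor\alpha^2 n\rfloor$, which I apply with $\alpha=\tfrac12$ to collapse two successive halvings into a single division by a power of two.

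For the base case $k=1$, the first $d$-ary lemma gives directly
$$u^{(d)}(n)\le u_1^{(d)}\left(\left\lfloor \tfrac{n}{2}\right\rfloor\right)+d\cdot u^{(d)}\left(\left\lfloor\tfrac{n}{2}\right\rfloor\right)=A_1\,u_1^{(d)}\left(\lfloor 2^{-1}n\rfloor\right)+B_1\,u^{(d)}\left(\lfloor 2^{-1}n\rfloor\right),$$
since $A_1=1$ and $B_1=d$ by the initial conditions in Lemma~\ref{lm:dseq}.

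For the inductive step, assume the claimed inequality holds for some $k\ge 1$, and suppose $n\ge 2^{k+1}$, so that $m:=\lfloor 2^{-k}n\rfloor\ge 2$ and the two lemmas may legitimately be invoked at argument $m$. Starting from the induction hypothesis $u^{(d)}(n)\le A_k u_1^{(d)}(m)+B_k u^{(d)}(m)$, I would bound $u_1^{(d)}(m)$ using the second $d$-ary lemma and $u^{(d)}(m)$ using the first, each in terms of the two functions evaluated at $\lfloor m/2\rfloor$. The floor fact gives $\lfloor m/2\rfloor\le\lfloor 2^{-(k+1)}n\rfloor$, and monotonicity then lets me replace $\lfloor m/2\rfloor$ by $\lfloor 2^{-(k+1)}n\rfloor$ throughout. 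Collecting coefficients, the multiplier of $u_1^{(d)}\!\left(\lfloor 2^{-(k+1)}n\rfloor\right)$ is $3A_k+B_k$ and that of $u^{(d)}\!\left(\lfloor 2^{-(k+1)}n\rfloor\right)$ is $(2d-2)A_k+dB_k$; by the recursions in Lemma~\ref{lm:dseq} these equal $A_{k+1}$ and $B_{k+1}$, which closes the induction.

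The argument is essentially bookkeeping, so I do not expect a genuine obstacle. The only points needing care are the same two as in the binary proof: verifying the hypothesis $m\ge 2$ before applying the two lemmas (which require their argument to be at least $2$), and invoking the floor inequality in the correct direction, so that monotonicity yields an upper rather than a lower bound.
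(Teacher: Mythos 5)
Your proof is correct and is exactly what the paper intends: the paper omits the argument, stating only that it can be proven similarly to the binary case, and your induction mirrors the proof of Theorem~\ref{thm:upperbound} with the two $d$-ary recursive lemmas in place of Lemmas~\ref{lm:unhalf} and~\ref{lm:u1nhalf}, with the coefficient bookkeeping $3A_k+B_k=A_{k+1}$ and $(2d-2)A_k+dB_k=B_{k+1}$ matching Lemma~\ref{lm:dseq}. You also correctly handle the two delicate points (ensuring the argument is at least $2$ before invoking the lemmas, and applying the floor inequality together with monotonicity in the right direction), so nothing is missing.
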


\begin{theorem} For any fixed $d\ge 2$, we have
 $u^{(d)}(n)=O(n^{\log_2(d+2)}).$
\end{theorem}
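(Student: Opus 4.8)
The plan is to mirror, almost verbatim, the argument that proved the binary bound in Theorem~\ref{thm:upperbound}, now feeding in the $d$-ary recursions. All the structural work has already been carried out: Lemma~\ref{lm:dseq} solves the coupled linear recursion by showing $C_k=2A_k+B_k=(d+2)^k$, and the preceding lemma unrolls the two mutual embedding inequalities into the single estimate
\[
u^{(d)}(n)\le A_k\,u_1^{(d)}\!\left(\left\lfloor 2^{-k}n\right\rfloor\right)+B_k\,u^{(d)}\!\left(\left\lfloor 2^{-k}n\right\rfloor\right)\qquad(n\ge 2^k).
\]
The strategy is simply to choose $k$ as large as possible so that the arguments on the right collapse to $1$, where $u^{(d)}$ and $u_1^{(d)}$ are known exactly.

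First I would dispose of the base case $n=1$ trivially, since $u^{(d)}(1)=1=1^{\log_2(d+2)}$. For $n\ge 2$, set $k=\lfloor\log_2 n\rfloor\ge 1$, so that $2^k\le n<2^{k+1}$. On the one hand this gives $n\ge 2^k$, so the unrolled recursion lemma applies; on the other hand it gives $1\le 2^{-k}n<2$, whence $\lfloor 2^{-k}n\rfloor=1$. Using the base values $u_1^{(d)}(1)=u^{(d)}(1)=1$ (a single vertex is simultaneously the unique one-leaf $d$-ary tree and the unique r-$d$-r tree with one white leaf), the recursion lemma then yields
\[
u^{(d)}(n)\le A_k+B_k\le 2A_k+B_k=C_k=(d+2)^k,
\]
where the middle inequality uses $A_k\ge 0$ and the final equality is exactly Lemma~\ref{lm:dseq}. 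Substituting $k=\lfloor\log_2 n\rfloor\le\log_2 n$ gives
\[
u^{(d)}(n)\le (d+2)^{\lfloor\log_2 n\rfloor}\le (d+2)^{\log_2 n}=n^{\log_2(d+2)},
\]
which is the claimed bound (indeed with implied constant $1$).

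I do not expect any genuine obstacle at this final stage: the real content lives in the two recursive embedding lemmas and in diagonalizing the recursion in Lemma~\ref{lm:dseq}, all of which are granted. The only points demanding a moment of care are the floor identity $\lfloor 2^{-k}n\rfloor=1$ for $k=\lfloor\log_2 n\rfloor$ and the handling of the base case $n<2$, both of which are immediate. As a sanity check I would note that specializing $d=2$ recovers $C_k=4^k$ and the exponent $\log_2 4=2$, consistent with Theorem~\ref{thm:upperbound}.
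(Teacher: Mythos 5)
Your proof is correct and follows essentially the same route the paper intends: the paper omits the proof of this theorem precisely because it mirrors the binary case (Theorem~\ref{thm:upperbound}), which is exactly what you do, unrolling the recursion down to $\lfloor 2^{-k}n\rfloor=1$ with $k=\lfloor\log_2 n\rfloor$ and invoking $C_k=(d+2)^k$ from Lemma~\ref{lm:dseq} (your version is in fact slightly cleaner, avoiding the paper's off-by-one bookkeeping with $m$ and yielding implied constant $1$). One trivial slip in a parenthetical: the unique r-$d$-r tree with one white leaf is not a single vertex but the cherry consisting of a red and a white leaf; this does not affect the value $u_1^{(d)}(1)=1$, which is all you use.
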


As before, we can use the sequence $\vec{s}_i$ introduced in Definition~\ref{def:svec} to connect the growth rates of $u^{(d)}(n)$ and $u_1^{(d)}(n)$.
\begin{theorem} Let $d\ge 2$. The following are true.
\begin{enumerate}[label={\upshape (\roman*)}]
\item Fix any $k\in\mathbb{N}$ and for each $i:1\le i\le 2^{k+1}-1$, let $R_i=\oplus(T_{i,1},\ldots,T_{i,d-1})$, where each $T_{i,j}$ is an $s_{i,k}$-universal tree of size $u^{(d)}(s_{i,k})$, let $Q$ be the redleaf tree with no white leaves, and set $T=R_{1}\oplus(R_{2}\oplus(R_{3}\oplus(\cdots(R_{2^{k+1}-1}\oplus Q)\cdots)))$. Then, $T\in\mathcal{U}_1^{(d)}(2^k)$.
\item For every $k\in\mathbb{N}$, $u_1^{(d)}(2^k)\le(d-1) \sum_{i=0}^k 2^i u^{(d)} (2^{k-i})$.
\item For any $\alpha>1$, we have $u^{(d)}(n)=O(n^{\alpha})$ if and only if $u_1^{(d)}(n)=O(n^{\alpha})$.
\item The bound $u_1^{(d)}(n)=O\left(n^{\log(d+2)}\right)$ holds.
\end{enumerate}
\end{theorem}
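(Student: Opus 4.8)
The plan is to follow the binary development almost verbatim, the one new feature being that at each spine vertex the single off-path subtree of the binary case is replaced by a \emph{bush} of up to $d-1$ subtrees. Part~(i) is the $d$-ary analogue of Lemma~\ref{lm:compare}, parts~(ii)--(iii) mirror the counting in Lemma~\ref{lm:compare} and the growth-rate transfer of Theorem~\ref{thm:growth}, and part~(iv) is a one-line corollary of the $O(n^{\log_2(d+2)})$ bound on $u^{(d)}$ established above. Throughout I will use the easy monotonicity fact that an $m$-universal $d$-ary tree contains every $d$-ary tree with \emph{at most} $m$ leaves: a tree on fewer than $m$ leaves is induced in some $m$-leaf tree (pad a leaf into a cherry), and the induced-$d$-ary-subtree relation is transitive.

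For part~(i), fix an r-$d$-r tree $F$ with $n=2^k$ white leaves and let $P=x_1x_2\ldots x_{\ell+1}$ be the $r_F$--$\ell_F$ path, so $x_1=r_F$ and $x_{\ell+1}=\ell_F$. Each $x_i$ with $i\le\ell$ is internal, so it has $x_{i+1}$ together with $m_i$ further children where $1\le m_i\le d-1$; let $F_{i,1},\ldots,F_{i,m_i}$ be the (white) subtrees rooted at those off-path children. Setting $b_i=\sum_{j}\whiteleaf{F_{i,j}}$, the $b_i$ form a composition of $n=2^k$ into positive parts, since every white leaf lies in exactly one bush and $\ell_F$ is the only (red) leaf on $P$. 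Lemma~\ref{lm:ssub} then supplies indices $1\le a_1<\cdots<a_\ell\le 2^{k+1}-1$ with $b_i\le s_{a_i,k}$. I would embed $F$ into $T$ by sending $x_i$ to the spine vertex of $T$ to which $R_{a_i}$ is attached and $\ell_F$ to the copy of $Q$: because $\whiteleaf{F_{i,j}}\le b_i\le s_{a_i,k}$ and $m_i\le d-1$, each $F_{i,j}$ embeds into a distinct one of the $d-1$ component trees $T_{a_i,1},\ldots,T_{a_i,d-1}$ of $R_{a_i}$, so the whole bush at $x_i$ embeds into $R_{a_i}$. As the $a_i$ are strictly increasing, the unused $R_j$ contribute no selected leaves, and after suppressing the degree-$2$ vertices along the skipped spine the image is exactly $F$; hence $T\in\mathcal{U}_1^{(d)}(2^k)$.

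Part~(ii) is then a count: $\whiteleaf{T}=\sum_{i=1}^{2^{k+1}-1}\whiteleaf{R_i}=(d-1)\sum_{i=1}^{2^{k+1}-1}u^{(d)}(s_{i,k})$, and by Definition~\ref{def:svec} the value $2^{t}$ occurs exactly $2^{k-t}$ times among the $s_{i,k}$, so the sum equals $(d-1)\sum_{t=0}^{k}2^{k-t}u^{(d)}(2^{t})=(d-1)\sum_{i=0}^{k}2^{i}u^{(d)}(2^{k-i})$, which with part~(i) is the asserted bound on $u_1^{(d)}(2^k)$. For part~(iii), the implication $u_1^{(d)}=O(n^\alpha)\Rightarrow u^{(d)}=O(n^\alpha)$ is immediate from the obvious $d$-ary fact $u^{(d)}(n)\le u_1^{(d)}(n)$ (delete the red leaf of a universal r-$d$-r tree and suppress its parent if needed; a plain $n$-leaf $F$ induces, after attaching a red leaf by subdividing an edge, an r-$d$-r tree with $n$ white leaves). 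For the converse, plugging $u^{(d)}(2^{k-i})\le c\,2^{\alpha(k-i)}$ into part~(ii) gives
\[
u_1^{(d)}(2^k)\le (d-1)c\,2^{\alpha k}\sum_{i=0}^{k}2^{(1-\alpha)i}\le (d-1)c\,2^{\alpha k}\sum_{i=0}^{\infty}2^{(1-\alpha)i},
\]
where the geometric series converges because $\alpha>1$; monotonicity of $u_1^{(d)}$ extends this from powers of two to all $n$. Finally, part~(iv) follows by applying part~(iii) with $\alpha=\log_2(d+2)$ (which satisfies $\alpha\ge 2>1$ since $d\ge2$) to the $O(n^{\log_2(d+2)})$ bound for $u^{(d)}$.

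The only genuinely new work is the bush-embedding in part~(i): the main obstacle I expect is verifying that the at-most-$(d-1)$ off-path subtrees at a spine vertex embed \emph{simultaneously} into the $d-1$ component trees of $R_{a_i}$ while the degree-$2$ suppression along the skipped spine reproduces $F$ exactly. Once this is checked, every counting and growth-rate step transfers from the binary case without surprises.
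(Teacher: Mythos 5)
Parts (ii)--(iv) of your write-up are fine \emph{modulo} part (i): the count $\whiteleaf{T}=(d-1)\sum_{i=0}^{k}2^{i}u^{(d)}(2^{k-i})$, the inequality $u^{(d)}(n)\le u_1^{(d)}(n)$, the convergent geometric series for $\alpha>1$, the extension from powers of two by monotonicity, and the specialization $\alpha=\log_2(d+2)$ all transfer correctly from the binary case. The genuine gap is exactly at the step you flagged in part (i), and it is not a verification that can be completed --- it fails. Suppose a spine vertex $x_i$ of $F$ has $m_i\ge 2$ off-path subtrees. You place the images of $F_{i,1},\ldots,F_{i,m_i}$ in distinct component trees of $R_{a_i}=\oplus(T_{a_i,1},\ldots,T_{a_i,d-1})$. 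Then, in the smallest subtree of $T$ spanning all selected leaves, the grouping root $r_{R_{a_i}}$ has degree $m_i+1\ge 3$ (one edge up to the spine vertex $v_{a_i}$, one edge toward each used component tree), so it is \emph{not} suppressed. In the induced tree, $v_{a_i}$ therefore has only two children --- the spine continuation and $r_{R_{a_i}}$ --- and the $m_i$ bush images hang from $r_{R_{a_i}}$, whereas in $F$ they are children of $x_i$ itself. The induced tree is not $F$; it is $F$ with every bush of size at least two ``grouped'' under an extra internal vertex.

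Worse, no cleverer selection can rescue the construction as literally stated (note that the paper offers no proof of this theorem, and its stated construction has the same defect): in $T=R_1\oplus(R_2\oplus(\cdots(R_{2^{k+1}-1}\oplus Q)\cdots))$ the ancestors of the red leaf $q$ are exactly the spine vertices $v_t$, and each $v_t$ has a single off-spine child, namely $r_{R_t}$. Hence in \emph{any} induced $d$-ary subtree containing $q$, every vertex of the root-to-$q$ path has at most one child besides the path continuation, while an isomorphism onto $F$ would have to send $x_i$ (which has $m_i+1$ children) to such a vertex. So for $d\ge 3$ and $k\ge 1$ no r-$d$-r tree with some $m_i\ge 2$ is an induced subtree of $T$, and $T\notin\mathcal{U}_1^{(d)}(2^k)$; the statement is only correct for $d=2$, where it degenerates to Lemma~\ref{lm:compare}. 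The repair is to delete the grouping roots and attach the $d-1$ universal trees directly to the spine, i.e., to take $T=\oplus\bigl(T_{1,1},\ldots,T_{1,d-1},\ \oplus(T_{2,1},\ldots,T_{2,d-1},\ \cdots\ \oplus(T_{2^{k+1}-1,1},\ldots,T_{2^{k+1}-1,d-1},Q)\cdots)\bigr)$, so that each spine vertex has up to $d$ children. With this $T$ your bush-embedding works verbatim: after suppression, the children of $v_{a_i}$ are precisely the roots of the images of $F_{i,1},\ldots,F_{i,m_i}$ together with the next surviving spine vertex (or $q$), reproducing $F$ exactly; the white-leaf count is unchanged, so parts (ii)--(iv) then follow exactly as you argue.
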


Finally, we briefly consider $d$-ary tanglegrams.
\begin{definition} A  {\em $d$-ary tanglegram} $\T=({L},{R},\sigma)$ is a graph that consists of a left  rooted $d$-ary tree ${L}$, a right  rooted 
$d$-ary tree ${R}$, such that $\whiteleaf{R}=\whiteleaf{L}$, and a perfect matching  $\sigma$ between their leaves of ${L}$ and ${R}$. 
\end{definition}
 
It is clear how to define  induced $d$-ary subtanglegrams and universal $d$-ary tanglegrams extending the ``binary" definitions. Denoting the minimum size of a universal $d$-ary tanglegram by $q^{(d)}(n)$, the proof of Theorem~\ref{th:untang} easily yields the following bound.
\begin{theorem} 
 The following holds:
\begin{equation}
q^{(d)}(n) \leq \left(u^{(d)}(n)\right)^2= O(n^{2\log_2(d+2)}).
\end{equation}
\end{theorem}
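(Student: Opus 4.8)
The plan is to mimic the construction in the proof of Theorem~\ref{th:untang} verbatim, replacing ``binary'' by ``$d$-ary'' throughout; the argument there never used binarity in an essential way, relying only on the universality of the two factor trees and on the freedom to attach arbitrary trees to their leaves and to route the matching edges.

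Concretely, I would first take two copies $L$ and $R$ of a minimum-size $n$-universal rooted $d$-ary tree, each with $u^{(d)}(n)$ leaves. To every leaf of $L$ I would attach an arbitrary rooted $d$-ary tree with $u^{(d)}(n)$ leaves, and similarly for $R$, obtaining trees $L'$ and $R'$, each with $\left(u^{(d)}(n)\right)^2$ leaves. Each of $L'$ and $R'$ thus contains $u^{(d)}(n)$ ``small trees'' rooted at the leaves of the corresponding factor tree. I would then define a matching $\sigma$ by sending, from each small tree of $L'$, its $u^{(d)}(n)$ leaves to $u^{(d)}(n)$ distinct small trees of $R'$ (one leaf per small tree), so that every small tree of $L'$ is joined by a matching edge to every small tree of $R'$. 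This yields a $d$-ary tanglegram $\T$ of size $\left(u^{(d)}(n)\right)^2$.

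Next I would verify that $\T$ is $n$-universal. Given any $d$-ary tanglegram $\T_1=(L_1,R_1,\sigma_1)$ of size $n$, universality of $L$ and $R$ supplies embeddings of $L_1$ and $R_1$ as induced $d$-ary subtrees of $L$ and $R$; these use $n$ distinct small trees of $L'$ and $n$ distinct small trees of $R'$, respectively. The matching $\sigma_1$ is a bijection between these two families of small trees, and for each such pair of small trees the construction of $\sigma$ provides a matching edge between one of their leaves. Since $\sigma_1$ is a bijection, the chosen edges use pairwise distinct leaves on both sides, so restricting $\T$ to this edge set induces a subtanglegram isomorphic to $\T_1$. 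Hence $q^{(d)}(n)\le\left(u^{(d)}(n)\right)^2$.

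Finally, combining with the $d$-ary upper bound $u^{(d)}(n)=O\!\left(n^{\log_2(d+2)}\right)$ established earlier gives $\left(u^{(d)}(n)\right)^2=O\!\left(n^{2\log_2(d+2)}\right)$, which is the claimed bound. I do not anticipate a genuine obstacle here, as the construction is arity-independent; the only point requiring a moment's care is checking that the matching edges selected to realize $\sigma_1$ use distinct leaves and therefore form a valid induced subtanglegram, which follows immediately from $\sigma_1$ being a perfect matching.
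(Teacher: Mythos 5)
Your proposal is correct and follows exactly the paper's intended argument: the paper proves this theorem by noting that the construction of Theorem~\ref{th:untang} carries over verbatim to the $d$-ary setting, which is precisely what you carry out (with the added, correct, verification that the edges realizing $\sigma_1$ are pairwise disjoint), and the final bound likewise comes from $u^{(d)}(n)=O\bigl(n^{\log_2(d+2)}\bigr)$.
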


\end{document}